\newcommand{\R}{\mathbb{R}}
\newcommand{\eps}{\varepsilon}
\theoremstyle{plain}
\newtheorem{defi}{Definition}[section]
\newtheorem{proposition}[defi]{Proposition}
\newtheorem{theorem}[defi]{Theorem}
\newtheorem{lemma}[defi]{Lemma}
\newtheorem{remark}[defi]{Remark}
\newtheorem{notation}[defi]{Notation}
\theoremstyle{definition}
\theoremstyle{remark}
\numberwithin{equation}{section}
\begin{document}

\title[Single-point gradient blow-up]{Single-point Gradient Blow-up on the Boundary for Diffusive Hamilton-Jacobi Equation in domains with non-constant curvature.}

\author{Carlos Esteve}
\address{Universit\'{e} Paris 13, Sorbonne Paris Cit\'{e}, Laboratoire Analyse, G\'{e}om\'{e}trie et Applications, 93430, Villetaneuse, France.
\newline {\tt E-mail address: esteve@math.univ-paris13.fr}
}

\date{\today}

\begin{abstract}
We consider  the diffusive Hamilton-Jacobi equation 
$u_t - \Delta u = |\nabla u|^p$ in a bounded planar domain with zero Dirichlet boundary condition.
It is known that, for $p>2$, the solutions to this problem can exhibit gradient blow-up (GBU) at the boundary. 
In this paper we study the possibility of the GBU set being reduced to a single point.
In a previous work \textit{[Y.-X. Li, Ph. Souplet, 2009]}, it was shown that single point GBU solutions can be constructed in 
very particular domains, i.e.~locally flat domains and disks. Here, we prove the existence of single point GBU solutions in a large class of domains, 
for which the curvature of the boundary may be nonconstant near the GBU point.

Our strategy is to use a boundary-fitted curvilinear coordinate system, combined with suitable auxiliary functions
and appropriate monotonicity properties of the solution.
The derivation and analysis of the parabolic equations satisfied by the auxiliary functions necessitate
long and technical calculations involving boundary-fitted coordinates.
\end{abstract}

\maketitle

\tableofcontents

\section{Introduction and first results}\label{Introduction}

We consider the initial-boundary value problem for the diffusive Hamilton-Jacobi equation
\begin{equation}\label{Hamilton-Jacobi problem}
\left\lbrace \begin{array}{rrlll}
u_t - \Delta u & =& |\nabla u|^p, & x\in \Omega, & t>0, \\
 \noalign{\vskip 1mm}
u &=& 0, & x\in \partial \Omega, & t>0, \\
 \noalign{\vskip 1mm}
u( x,0) &=& u_0 (x), & x \in \Omega, &
\end{array} \right.
\end{equation}
where $\Omega$ is a smooth bounded domain in $\mathbb{R}^2$, $p>2$ and 
\begin{equation*}
u_0 \in X_+ := \{ v\in C^1(\overline{\Omega}); \ v\geq 0, \ v|_{\partial\Omega} = 0 \}.
\end{equation*}

Equation \eqref{Hamilton-Jacobi problem} is a typical model-case in the theory of nonlinear parabolic equations,
being the simplest example of a parabolic equation with a nonlinearity depending on the gradient of the solution.
It has been extensively studied in the past twenty years and it is well known that
 if $p\leq 2$ or if $\Omega=\R^n$, then 
all solutions exist globally in the classical sense, see
\cite{ABA}, \cite{BKL}, \cite{BL}, \cite{BASW}, \cite{G}, \cite{GGK}, \cite{LS}, \cite{QS}, \cite{S2}.
On the contrary,
for the case of superquadratic growth of the nonlinearity, i.e. $p>2$, with $\Omega\neq\R^n$, 
solutions exhibit singularities for large enough initial data.
The nature of this singularity is of \emph{gradient blow-up} type, and occurs on some subset of the boundary of the domain, see
\cite{A}, \cite{ABG}, \cite{ARBS}, \cite{BDL}, \cite{CG}, \cite{FL}, \cite{GH}, \cite{HM}, \cite{Li-Souplet},
\cite{QS}, \cite{S1}, \cite{SV}, \cite{Souplet-Zhang}.

In addition, equation \eqref{Hamilton-Jacobi problem} arises in
stochastic control theory \cite{Lions}, and is involved in certain physical models, for example of ballistic deposition processes,
where the solution describes the growth of an interface, see \cite{HH-Z}, \cite{KPZ}, \cite{KS}.

It follows from classical theory, see for example \cite[Theorem 10, p. 206]{Friedman}, that problem \eqref{Hamilton-Jacobi problem} admits a unique maximal, 
nonnegative classical solution $u\in C^{2,1}(\overline{\Omega}\times (0,T))\cap C^{1,0} (\overline{\Omega}\times[0,T))$,
where $T=T(u_0)$ is the maximal existence time.
By the maximum principle, for problem \eqref{Hamilton-Jacobi problem} we have
$$
\| u(t)\|_\infty \leq \|u_0\|_\infty, \qquad 0<t<T.
$$
Since \eqref{Hamilton-Jacobi problem} is well posed in $X_+$, it follows that, if $T<\infty$, then
$$
\lim_{t\to T} \| \nabla u(t) \|_\infty = \infty.
$$
This phenomenon of $\nabla u$ blowing up with $u$ remaining uniformly bounded is known as \emph{gradient blow-up}. 
The gradient blow-up set of $u$ is defined by
$$GBUS(u_0) =   \{x_0\in \partial\Omega; \ \displaystyle\limsup_{t\to T,\,x\to x_0} |\nabla u(x,t)|=\infty\}.$$
We call
gradient blow-up point (GBU point for short) 
any point in $GBUS(u_0)$.
The space profile at $t=T$ is investigated in \cite{CG}, \cite{ARBS}, \cite{Souplet-Zhang}, \cite{GH}, \cite{PS}.
For results on the GBU rate, we refer to \cite{CG}, \cite{GH}, \cite{ZL}, \cite{PS2}.
Also, the existence and properties of a weak continuation of the solution after GBU are studied in 
 \cite{FL}, \cite{BDL}, \cite{PZ}, \cite{PS2a}, \cite{QR18}.

From \cite[Theorem 3.2]{Souplet-Zhang}, it follows that gradient blow-up can only occur at the boundary (see also \cite{ABG}, \cite{ARBS}). 
More precisely,
 the following estimate  is given:
\begin{equation}\label{bernstein estimate 1}
|\nabla u|\leq C_1 \delta^{-\frac{1}{p-1}}(x,y) + C_2 \quad \text{in} \ \Omega\times[0,T),
\end{equation}
where $C_1 = C_1(n,p)>0$ and $C_2 = C_2(p,\Omega,\|u_0\|_{C^1})>0$. 
Here, $\delta(x,y)$ is the distance function to the boundary.

In this paper we are interested in the possibility of having isolated gradient blow-up points at the boundary. 
Up to now, the only available results of this kind, ensuring single-point GBU for suitable initial data, are those from \cite{Li-Souplet},
and they are restricted to very particular domains, namely disks and locally flat domains with some symmetry assumptions
(see also \cite{AS17} for a related problem with nonlinear diffusion in locally flat domains).

As it turns out, a key feature in the proofs in
 \cite{Li-Souplet}, \cite{AS17} is the fact that the curvature of the boundary is constant near the GBU point. 
In this paper we are able to show that this can be considerably relaxed and we cover large classes of domains.

In order to give a good illustration of our main results without entering into too much technicality, let us right away
formulate a single point gradient blow-up result for two typical classes of domains.
More general results will be given in Section~2.
We first treat the case of ellipses.

\begin{theorem}\label{ellipses theorem}
Let $p>2$ and $\Omega\subset\mathbb{R}^2$ be an ellipse.
Then, there exist initial data $u_0\in X_+$ such that $T(u_0)<\infty$ and $GBUS(u_0)$ contains only
a boundary point of minimal curvature. 
\end{theorem}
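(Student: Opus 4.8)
\emph{Reduction and choice of data.} Write $\Omega=\{x^{2}/A^{2}+y^{2}/B^{2}<1\}$ with $A>B$ (the case $A=B$ is the disk, already covered by \cite{Li-Souplet}), and let $a=(0,B)$ be one of the two points where the curvature $\kappa$ of $\partial\Omega$ attains its minimum $B/A^{2}$. Using the parametrization $(A\cos t,B\sin t)$, the quantity $A^{2}\sin^{2}t+B^{2}\cos^{2}t$ is maximal at $t=\pi/2$ and nondegenerately so; hence, in the arclength variable $s$ centred at $a$, one has $\kappa'(0)=0$ and $\kappa''(0)>0$. Moreover the inward normal at $a$ is the $y$-axis, which is an axis of symmetry of $\Omega$. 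The plan is to take $u_{0}=M\varphi$, where $\varphi\in X_{+}$ is fixed, symmetric with respect to the $y$-axis, supported in a small half-disc around $a$, and (in the boundary-fitted coordinates below) nonincreasing in $|s|$, and where $M$ is a large constant; and to show that for $M$ large $GBUS(u_{0})=\{a\}$. Only the three properties $\kappa'(0)=0$, $\kappa''(0)>0$ and the symmetry will be used, so this is really the specialization of a general criterion.

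\emph{Curvilinear coordinates, symmetry, monotonicity.} First I would set up boundary-fitted coordinates $(s,\delta)$ in a fixed neighbourhood $V$ of $a$, with $s$ the signed arclength along $\partial\Omega$ and $\delta$ the distance to $\partial\Omega$, and derive the quasilinear parabolic equation satisfied by $u$ in $V$; it has the schematic form
\[
u_{t}=u_{\delta\delta}-\frac{\kappa(s)}{1-\kappa(s)\delta}\,u_{\delta}+\frac{u_{ss}}{(1-\kappa(s)\delta)^{2}}+\frac{\kappa'(s)\,\delta}{(1-\kappa(s)\delta)^{3}}\,u_{s}+\Big(u_{\delta}^{2}+\frac{u_{s}^{2}}{(1-\kappa(s)\delta)^{2}}\Big)^{p/2},
\]
with $u_{s}=0$ and $|\nabla u|=|u_{\delta}|$ on $\partial\Omega\cap V$. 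By uniqueness the solution stays symmetric, so $u_{s}\equiv0$ on $\{s=0\}$. Differentiating in $s$, the function $w=u_{s}$ solves a linear parabolic equation whose zero-order coefficient is bounded near $a$ (here the boundedness of $\kappa',\kappa''$ enters), and I would apply the maximum principle on a small cylinder $\{0<s<s_{1},\ 0<\delta<\delta_{1}\}\times(0,T)$: there $w=0$ on $\{s=0\}\cup\{\delta=0\}$, $w\le0$ at $t=0$, $|\nabla u|$ is bounded on $\{\delta=\delta_{1}\}$ by \eqref{bernstein estimate 1}, and the lateral face $\{s=s_{1}\}$ is controlled by a barrier exploiting the support of $\varphi$. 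This gives $u_{s}\le0$ for $0<s<s_{1}$ and, symmetrically, $u_{s}\ge0$ for $-s_{1}<s<0$. Since $u_{s}(s,0,t)=0$, this forces $u_{s\delta}(s,0,t)\le0$ for $0<s<s_{1}$, so $s\mapsto u_{\delta}(s,0,t)$ is maximal at $s=0$ for every $t<T$.

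\emph{GBU occurs exactly at $a$.} For $M$ large one has $T(u_{0})<\infty$ by the known sufficient conditions for GBU, so $GBUS(u_{0})\ne\emptyset$; and, $T(u_{0})$ being small for $M$ large, a comparison argument shows that $u$ remains so small near $\partial\Omega\setminus V$ that a local gradient estimate excludes GBU there. Hence $GBUS(u_{0})\subset\overline V\cap\partial\Omega$ and is nonempty, so $u_{\delta}(\cdot,0,t)$ is unbounded, as $t\to T$, near some point of $\partial\Omega\cap V$; since that function is maximal at $s=0$, it is unbounded near $s=0$, i.e.\ $a\in GBUS(u_{0})$.

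\emph{No other GBU point, and the obstacle.} It remains to show $b\notin GBUS(u_{0})$ for every $b\in\partial\Omega\cap V$ with $b\ne a$. Fixing $\eta>0$ and $b$ with arclength $|s_{b}|\ge\eta$, I would construct, in a neighbourhood of $\{|s|\ge\eta\}$ in $\overline\Omega\cap V$, an auxiliary function of Bernstein–Friedman–McLeod type — morally $J=u_{\delta}-\beta_{\eta}(s)\,G(u)$, with $G$ superlinear and $\beta_{\eta}$ a weight finite away from $s=0$ and adapted to $\kappa$ — and show via the maximum principle and the equation above that $J\le0$ there; since $u\le\|u_{0}\|_{\infty}$, this yields $u_{\delta}\le C_{\eta}$ near $b$, uniformly in $t<T$, hence $b\notin GBUS(u_{0})$. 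The crucial point is that the sign of $u_{s}$ from the previous step, together with $\kappa'(s)$ having the sign of $s$ near $a$ (since $\kappa$ has a nondegenerate minimum there), is exactly what makes the differential inequality for $J$ close: when $\kappa$ is constant these curvature terms vanish and one is back to the situation of \cite{Li-Souplet}, whereas here they must be absorbed. Deriving the equation in curvilinear coordinates with sufficient precision, and designing the weights $\beta_{\eta}$ (and the barriers used above) so that every parabolic inequality genuinely closes, is the main difficulty and accounts for the long technical calculations announced in the abstract. Combining the four steps gives $GBUS(u_{0})=\{a\}$, a point of minimal curvature of $\partial\Omega$.
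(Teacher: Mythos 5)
Your overall architecture (boundary-fitted coordinates, symmetry across the normal at the minimal-curvature point, a sign for $u_s$, Friedman--McLeod auxiliary functions, nondegeneracy to exclude GBU away from $a$) matches the paper's, but two steps that you treat as routine are precisely where the paper has to work hardest, and as written they do not close. First, in your monotonicity step you assert that $w=u_s$ "solves a linear parabolic equation whose zero-order coefficient is bounded near $a$." Differentiating your own schematic equation in $s$ produces, from the coefficient $-\kappa(s)/(1-\kappa(s)\delta)$ of $u_\delta$, a genuine \emph{source} term of the form $-\kappa'(s)(1-\kappa\delta)^{-2}u_\delta$: it is not a zero-order term in $w$, and $u_\delta$ is unbounded as $t\to T$ near the GBU point. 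In the paper this is the term $\frac{K'}{(1-rK)^3}\frac{1}{\beta'}u_x$ of Lemma \ref{parabolic eq for u_s lemma} (obtained via \eqref{r derivative in temrs of x and s derivatives}), and its sign is controlled only because one separately proves $u_x<0$ throughout $\Omega\cap\{x>0\}$, which uses the \emph{global} symmetry and convexity of the ellipse in the $x$-direction --- not the local facts $\kappa'(0)=0$, $\kappa''(0)>0$ that you claim are all that is needed. Second, the boundary data for the maximum principle on your cylinder are not available by the means you invoke: on the lateral face $\{s=s_1\}$, "a barrier exploiting the support of $\varphi$" cannot give $u_s\le 0$ for $t>0$ (the solution is instantly positive everywhere), and on the top face boundedness of $|\nabla u|$ from \eqref{bernstein estimate 1} gives no sign for $u_s$. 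The paper obtains these signs from two moving-plane reflections specific to the ellipse: reflection across the normal line $\Lambda_{s_0}$ (hypothesis \eqref{thin domain condition 2 2}, verified through the fact that two ellipses meet in at most four points counted with multiplicity, with a double contact at $\gamma(s_0)$) and reflection across the major axis (hypothesis \eqref{Omega plus}). Without some substitute for these reflection arguments your maximum principle for $u_s$ has no admissible boundary data.

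A secondary but real problem is your choice $u_0=M\varphi$ with $M$ large. The localization of the GBU set (Proposition \ref{GBU location lemma}) requires $\|u_0\|_\infty\le C_2(p,\Omega,\rho)$ \emph{small}, so that $u$ stays below the elliptic supersolution $c_2\psi$ vanishing outside a neighborhood of $a$; blow-up is then forced not by largeness of $M$ but by concentration ($\inf_{\tilde B_{\eps/2}}u_0\ge C_1\eps^{k}$ with $\eps$ small), exploiting the scale invariance of the equation. Your "comparison argument" for large $M$ and small $T$ is not justified and runs against the mechanism actually used. Finally, note that the paper does not bound $u_\delta$ near $b\ne a$ directly with a $J=u_\delta-\beta_\eta(s)G(u)$ barrier; it proves the quantitative bound $u_s\le -\frac{k}{2}(s-\eta)r^{-\gamma}u^{q}$ via $J=\frac{u_s}{1-rK}+c(s)d(r)u^{q}$ (coupled with a second function $\bar J$ controlling $u_x$, again needed to absorb the $K'$ source term), integrates in $s$ to get $u\le C\delta^{1-2\sigma}$ with $1-2\sigma>(p-2)/(p-1)$, and concludes by Lemma \ref{nondegeneracy lemma}. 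Your variant might be made to work, but the curvature terms you would have to absorb are exactly the ones your sketch leaves unaddressed.
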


For our second class of domains, the main feature is that the GBU point has its center of curvature lying outside $\Omega$
and is a local minimum of the curvature,
along with suitable geometric conditions. Namely, we assume:
\begin{eqnarray}
\label{x symmetry and convexity pre}
&&\begin{array}{l}
\text{$\Omega$ is symmetric with respect to the line $x=0$ and convex in the $x$-direction},
\end{array}\\
\label{Omega tangent to y>0}
&&\begin{array}{l} \text{$\partial\Omega$ is tangent to the line $y=0$ at the origin and $\Omega\subset\{y>0\}$}, 
\end{array} \\
\label{small curvature at the origin pre}
&&\begin{array}{l}
\text{The radius of curvature $R(x)$ of $\partial \Omega$ is a nonincreasing function for $x>0$ small} \\ 
\text{and $\overline{\Omega}\subset \{y<R (0)\}$},
\end{array} \\
\label{symmetry K(0) pre}
&&\begin{array}{l}
\text{For all $X_0\in \partial\Omega\cap \{x>0\}$ close to the origin, the symmetric of $\Omega_{X_0}$ with respect} \\
\text{to $\Lambda_{X_0}$ is contained in $\Omega$, where $\Lambda_{X_0}$ is the normal line to $\partial\Omega$ at $X_0$, and $\Omega_{X_0}$ is} \\
\text{the part of $\Omega$ to the right of $\Lambda_{X_0}$.}
\end{array}
\end{eqnarray}

See Figure \ref{figure second class} for an example of a domain satisfying these hypotheses.
We point out that the function $R(x)$ in \eqref{small curvature at the origin pre} is valued in $(0,\infty]$.

\begin{theorem}\label{almost flat theorem}
Let $p>2$ and suppose $\Omega\subset\mathbb{R}^2$ is a domain satisfying 
\eqref{x symmetry and convexity pre}--\eqref{symmetry K(0) pre}.
Then, there exist initial data $u_0\in X_+$ such that $T(u_0)<\infty$ and $GBUS(u_0)$ contains only the origin.
\end{theorem}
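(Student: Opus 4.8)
The plan is to construct the initial datum $u_0$ so that the solution is, for all $t\in[0,T)$, symmetric in $x$ and monotone in the $x$-direction on each half, and so that gradient blow-up occurs; then to rule out GBU at every boundary point other than the origin by a local barrier/monotonicity argument carried out in boundary-fitted curvilinear coordinates near an arbitrary competitor point $X_0\in\partial\Omega\setminus\{0\}$. First I would fix a large, radially-decreasing-type profile $u_0\in X_+$, symmetric with respect to $\{x=0\}$ and with $\partial_x u_0\le 0$ on $\{x>0\}$, large enough that $T(u_0)<\infty$; by the results quoted in the Introduction the solution then undergoes GBU somewhere on $\partial\Omega$, and by uniqueness it inherits the symmetry $u(-x,y,t)=u(x,y,t)$, while the maximum principle applied to $w=\partial_x u$ (which solves a linear parabolic equation) together with the hypothesis \eqref{x symmetry and convexity pre} preserves $\partial_x u\le 0$ on $\Omega\cap\{x>0\}$. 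This already forces the GBU set to be symmetric in $x$, so it suffices to exclude GBU at points $X_0\in\partial\Omega\cap\{x\ge 0\}$, $X_0\ne 0$.

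Next, near such a point $X_0$ (with $x$-coordinate bounded away from $0$), I would introduce boundary-fitted coordinates $(s,\eta)$, where $s$ is arclength along $\partial\Omega$ and $\eta=\delta(x,y)$ the distance to $\partial\Omega$, and derive the (lengthy, technical) parabolic equation satisfied by an auxiliary function of the form $J=\partial_\eta u + c(s,\eta)\,\varphi(u)$ or a tangential-derivative analogue built from $\partial_s u$; the curvature-monotonicity hypothesis \eqref{small curvature at the origin pre} and the reflection hypothesis \eqref{symmetry K(0) pre} are exactly what is needed to control the zeroth-order coefficient of this equation with a favorable sign, and \eqref{Omega tangent to y>0}, \eqref{symmetry K(0) pre} (comparing $\Omega_{X_0}$ to its reflection across the normal line $\Lambda_{X_0}$) give a one-sided monotonicity of $u$ in the direction of $\Lambda_{X_0}$, i.e. $u$ is "leaning away" from $X_0$. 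The upshot should be an estimate, via the maximum principle applied to $J$ on a suitable parabolic subdomain with boundary data controlled using the Dirichlet condition, the $C^1$ bound on $u$, and the Bernstein-type estimate \eqref{bernstein estimate 1} away from the corner, that yields $|\nabla u(x,t)|\le C(X_0)$ for $(x,t)$ in a neighborhood of $X_0$ up to $t=T$. Hence $X_0\notin GBUS(u_0)$.

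Finally, I would show the GBU set is nonempty and contains the origin: since GBU must occur somewhere on $\partial\Omega$ and, by the previous paragraph, cannot occur at any point with $x\ne 0$, the only candidate left is the origin, so $0\in GBUS(u_0)$; combined with the exclusion of all other points this gives $GBUS(u_0)=\{0\}$. (One must check the barrier construction is uniform enough that the excluded neighborhoods cover all of $\partial\Omega\setminus\{0\}$, using compactness of $\partial\Omega\cap\{|x|\ge\epsilon\}$ for each $\epsilon>0$ together with monotonicity of the constants as $x\to 0^+$; the symmetry assumption \eqref{x symmetry and convexity pre} handles $x<0$.) The main obstacle, and the heart of the argument, is the derivation and sign analysis of the parabolic inequality for the auxiliary function $J$ in boundary-fitted coordinates: the metric coefficients, the curvature $\kappa(s)$ and its derivative, and cross terms all enter the computation, and showing that the zeroth-order term has the right sign is precisely where the hypotheses \eqref{small curvature at the origin pre}--\eqref{symmetry K(0) pre} must be used in a delicate way, replacing the "constant curvature" simplification of \cite{Li-Souplet}. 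A secondary difficulty is propagating the one-sided monotonicity in the normal-line direction from $t=0$ up to $t=T$ uniformly near $X_0$, which again relies on \eqref{symmetry K(0) pre} via a moving-plane-type reflection across $\Lambda_{X_0}$ adapted to the parabolic setting.
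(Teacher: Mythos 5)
Your overall philosophy (symmetry, $x$-monotonicity, auxiliary functions in boundary-fitted coordinates, maximum principle, exclusion of every non-origin point, then nonemptiness of the GBU set) matches the paper's, but there are three concrete gaps. First, the initial data. Taking a ``large, radially-decreasing-type profile'' does not localize the GBU set, and localization is not optional here: the paper constructs $u_0$ concentrated in a small ball tangent to $\partial\Omega$ at the origin (Proposition \ref{GBU location lemma}), which simultaneously forces $T(u_0)<\infty$ and confines $GBUS(u_0)$ to $\gamma(-s_0/2,s_0/2)$. This localization is what handles all boundary points far from the origin (where no barrier is available), and it is also what guarantees that $u$ and $\nabla u$ extend smoothly up to $t=T$ on the outer edges $\{r=r_1\}$ and $\{s=s_1\}$ of the working region, so that Hopf's lemma and the corner lemma (Lemma \ref{Serrin corner lemma 2}) can supply the strictly negative boundary data ($u_x\le -c_1 r$, $u_s\le -c_3 r$, etc.) without which the maximum principle for the auxiliary functions cannot be closed. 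Second, your plan to run a moving-plane reflection across $\Lambda_{X_0}$ at \emph{every} competitor point $X_0$ is not supported by the hypotheses: \eqref{symmetry K(0) pre} only provides the reflection property $\mathcal{T}_{X_0}(\Omega_{X_0})\subset\Omega$ for $X_0$ \emph{close to the origin}. The paper uses the reflection exactly once, at a single fixed point $\gamma(s_0)$ near the origin, to obtain the boundary condition $u_s\le 0$ on $\Lambda_{s_0}\cap\Omega$ for the tangential derivative on the fixed region $\omega_0$; points with $s>s_0$ are never treated by a barrier at all, only by the localization of step one.

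Third, and most importantly, the heart of the technical difficulty is mislocated. The auxiliary function is built on the \emph{tangential} derivative, $J=\frac{u_s}{1-rK}+c(s)d(r)u^q$ with $c(s)=k(s-\eta)$, and the mechanism is not a direct gradient bound $|\nabla u|\le C(X_0)$: one proves $J\le 0$, i.e.~$u_s\le -\frac{k}{2}(s-\eta)r^{-\gamma}u^q$, integrates this differential inequality along the coordinate curves parallel to the boundary to get $u\le C(\eta)(s-\eta)^{-2/(q-1)}\delta^{1-2\sigma}$ with $1-2\sigma>(p-2)/(p-1)$, and then invokes the nondegeneracy Lemma \ref{nondegeneracy lemma} to conclude that $\gamma(s)$ is not a GBU point for each $s>\eta$, with $\eta>0$ arbitrary. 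Moreover, because $\Delta$ does not commute with $\partial_s$ when the curvature is nonconstant, the parabolic equation for $w=u_s/(1-rK)$ carries the extra term $\frac{K'}{(1-rK)^3}\frac{1}{\beta'}u_x$ (Lemma \ref{parabolic eq for u_s lemma}); its sign can only be controlled by introducing a \emph{second} auxiliary function $\bar J=u_x+\bar c(s)d(r)u^q$ and proving $\bar J\le 0$ first. Your proposal treats the curvature terms as a single ``zeroth-order coefficient with a favorable sign'' and does not identify this coupling between the tangential and the $x$-derivative, which is precisely the new ingredient relative to the constant-curvature case of \cite{Li-Souplet}.
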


\begin{remark}
\begin{enumerate} 
\item Observe that in the case of the locally flat domains studied in \cite{Li-Souplet}, 
condition~\eqref{symmetry K(0) pre} is a consequence of \eqref{x symmetry and convexity pre}. In this case, for any $X_0\in\partial\Omega\cap \{x>0\}$ near the origin, $\Lambda_{X_0}$ will
be parallel to the line $x=0$. 
Also hypothesis \eqref{small curvature at the origin pre} is trivially satisfied by locally flat domains.
\item Although it is possible to construct initial data for which the GBU set is 
arbitrarily concentrated close to any given boundary point
(see Proposition \ref{GBU location lemma}),
it is presently a (probably difficult) open question whether single point GBU may occur on points other than local minima of the curvature.
\end{enumerate}
\end{remark}

\begin{figure}[h]
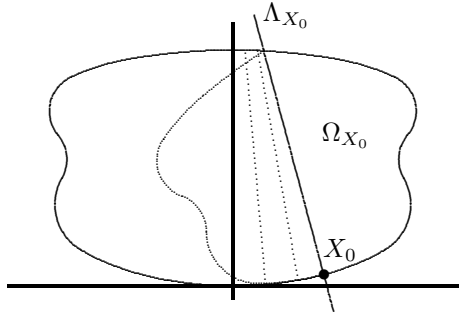

\[
\beginpicture
\setcoordinatesystem units <1cm,1cm>
\setplotarea x from -3 to 3, y from -0.2 to 3.5

\setdots <0pt>
\linethickness=1pt
\putrule from -3 0 to 3 0
\putrule from 0 -0.2 to 0 3.5

\circulararc 10 degrees from 0 0  center at 0 5
\circulararc 10 degrees from 0.86824 0.07596  center at 0.17365 4.01519
\circulararc 10 degrees from 1.54173 0.25642  center at 0.51567 3.07550
\circulararc 90 degrees from 2.01567 0.47742  center at 1.66567 1.08364
\circulararc -70 degrees from 2.27189 1.43364  center at 2.70490 1.68364
\circulararc 105 degrees from 2.32188 2.00504  center at 1.93885 2.32643
\circulararc 15 degrees from 2.15016 2.77958  center at 0.88231 0.06066
\circulararc 10 degrees from 1.40325 3.01508  center at 0.0000 -4.94275

\circulararc -10 degrees from 0 0  center at 0 5
\circulararc -10 degrees from -0.86824 0.07596  center at -0.17365 4.01519
\circulararc -10 degrees from -1.54173 0.25642  center at -0.51567 3.07550
\circulararc -90 degrees from -2.01567 0.47742  center at -1.66567 1.08364
\circulararc 70 degrees from -2.27189 1.43364  center at -2.70490 1.68364
\circulararc -105 degrees from -2.32188 2.00504  center at -1.93885 2.32643
\circulararc -15 degrees from -2.15016 2.77958  center at -0.88231 0.06066
\circulararc -10 degrees from -1.40325 3.01508  center at 0.0000 -4.94275

\setdots <1pt>
\circulararc -5 degrees from 1.208900 0.151490  center at 0.173620 4.015210
\circulararc -10 degrees from 0.868215 0.075962  center at 0.347269 3.030398
\circulararc -90 degrees from 0.347268 0.030385  center at 0.347268 0.730388
\circulararc 70 degrees from -0.352735 0.730388  center at -0.852737 0.730388
\circulararc -105 degrees from -0.681726 1.200237  center at -0.510715 1.670085
\circulararc -15 degrees from -0.920293 1.956874  center at 1.537174 0.236137
\circulararc -7 degrees from -0.391197 2.534281  center at 4.802934 -3.655845

\setdots <0pt>
\setlinear 
\plot 1.33833 -0.33147  0.28215 3.61028 /

\setdots <2pt>
\setlinear
\plot 0.86824 0.075961  0.32955 3.1310 /
\plot 0.43578 0.019027  0.16307 3.1361 /

\put{$X_0$}[ld] at 1.2 0.48
\put{$\bullet$}[cc] at 1.2089 0.15149
\put{$\Lambda_{X_0}$}[lc] at 0.4 3.6
\put{$\Omega_{X_0}$}[cc] at 1.5 2

\endpicture
\]
\caption{Example of domain satisfying hypotheses \eqref{x symmetry and convexity pre}--\eqref{symmetry K(0) pre}.}
\label{figure second class}
\end{figure}

In the next section we give single point GBU results more general than Theorems \ref{ellipses theorem} and \ref{almost flat theorem}, at the expense of more technical statements
(see Theorems \ref{local result} and \ref{second class theorem}).
The technical complexity of the statements comes from the fact that, in order to describe the hypotheses involved, 
we need to introduce a coordinate system adapted to the boundary near the gradient blow-up point
(and actually this coordinate system is crucially used in the proof of our results).

\section{General results}\label{general results} 

We introduce a class of symmetric domains with respect to the line $x=0$,
containing those described in the previous theorems,
and for which we can construct single-point GBU solutions. 
A first step of our strategy is to prove that the solution $u$ is monotone in the parallel direction to the boundary in a neighborhood of the GBU point.
It is therefore natural to introduce a curvilinear coordinate system adapted to the domain,
allowing us to study the sign of the derivative of the solution in the parallel direction to the boundary.
This coordinate system is sometimes called ``boundary-fitted'' coordinate system or ``flow coordinates''.
We point out that the use of these coordinates brings some technical difficulties, and
that long computations and quite delicate arguments are required in order to control the terms related to the non-constant curvature
(under appropriate assumptions on the domain).
However, our attempts to prove such results, on single-point GBU in domains with nonconstant curvature,
 by merely using cartesian coordinates or local charts have turned out to be unsuccessful.

Next, we set the notation used throughout the rest of the paper and
introduce the curvilinear coordinate system mentioned above.
See Figure \ref{Fig Notation 1} for an illustration of this notation.

\begin{notation}\label{notation1}
$\phantom{}$

$\bullet$ $\Omega$ is a smoothly bounded domain of $\mathbb{R}^2$ and 
$\nu=(\nu_x,\nu_y)$ denotes the unit normal outward vector to $\partial \Omega$.

$\bullet$ $\Gamma\subset \partial \Omega$ is a connected boundary piece, with $(0,0)\in\Gamma$,
and we assume that
\begin{equation}\label{symmetry of the domain}
\text{$\Omega$ and $\Gamma$ are symmetric with respect to the line $x=0$.}
\end{equation} 

$\bullet$ For given $s_0>0$, the map 
$$\gamma(s)=(\alpha(s), \beta(s)), \quad\hbox{ $s\in [-s_0,s_0]$,}$$
 is an arclength parametrization of $\Gamma$ 
(i.e.~$\alpha'(s)^2+\beta'(s)^2=1$), with
$\gamma(0) = (0,0)$.

$\bullet$ We denote
$$T(s)=(\alpha'(s),\beta'(s)), \ \quad N(s)=T^\perp(s)=(-\beta'(s),\alpha'(s)),\qquad\hbox{for all $s\in [-s_0,s_0]$.}$$
We see that $T(s)$ is a unit tangent vector to $\partial\Omega$ at the point $\gamma(s)$ and, without loss of generality
(replacing $s$ by $-s$ if necessary), we can assume that 
\begin{equation}\label{good sense of the parametrization}
N(s) \ \text{is the inward normal vector to $\partial\Omega$ at the point $\gamma(s)$}
\end{equation}
and that
$$\gamma(0)=(0,0),\ \quad T(0)=(1,0), \ \quad N(0)=(0,1).$$

$\bullet$ 
We denote the curvature of the boundary by
$$ 
K(s):= det(\gamma',\gamma'')=\alpha'\beta'' - \beta'\alpha'', \quad\hbox{for all $s\in [-s_0,s_0]$.}
$$
By the regularity of $\partial\Omega$, this function is bounded and smooth. 

$\bullet$
We introduce the map $M:=\gamma+rN$, i.e.
\begin{equation}\label{flow coordinates}
\begin{array}{cccc}
M: & [0,\infty)\times [-s_0,s_0] & \longrightarrow & \mathbb{R}^2 \\
       &  (r,s)                  & \longmapsto     & M(r,s)=\gamma(s)+rN(s).
\end{array}
\end{equation}
\end{notation}

For a given domain $\Omega$ and a boundary piece $\Gamma$ as in Notation \ref{notation1}, 
our goal will be to prove the existence of initial data for which the GBU set is reduced to the origin.
Using the coordinates given by the map $M$, 
we will use auxiliary functions to estimate the derivative of $u$ with respect to $s$.
Then, an integration over the coordinate curves parallel to the boundary will give 
an upper estimate on $u$ which is sufficient to apply a nondegeneracy result (see Lemma \ref{nondegeneracy lemma} below) for each $s>0$,
proving that gradient blow-up can only take place at the origin.

In order to apply our methods, we need to make some extra geometric assumptions on the domain. 
Namely, we need to assume that $\Omega$ is locally convex near the origin 
and that the origin is a local minimum for the
curvature of the boundary, i.e.
\begin{equation}\label{increasing curvature}
K(0) \geq 0 \quad \text{and} \quad K'(s) \geq 0 \ \quad\hbox{ for all $s\in [0,s_0]$,}
\end{equation} 
along with
\begin{equation}\label{technical condition}
\alpha'(s),\beta'(s)> 0, \ \quad\hbox{ for all $s\in (0,s_0)$}.
\end{equation}

We note that \eqref{increasing curvature} implies $K(s)\geq 0$ for $s\in (0,s_0]$.
We point out that condition \eqref{technical condition} excludes domains which are flat near the origin, 
but this case is comparatively
easier and was treated in \cite{Li-Souplet}.
Hypotheses \eqref{increasing curvature} and \eqref{technical condition} are necessary for two reasons.
On the one hand, they are needed to define a region where the parameterization $M$ is well defined.
On the other hand, when deriving the parabolic inequalities satisfied by
the auxiliary functions, they are needed to control some terms coming from the non-constant curvature.

Under the above assumptions, let us denote
\begin{equation}\label{defRadius}
R(s)=1/K(s)\in (0,\infty],\ \quad s\in [0,s_0],
\end{equation}
the radius of curvature of $\partial\Omega$ at $\gamma(s)$, and define the natural regions
\begin{equation}\label{defQGamma}
Q_\Gamma=\bigl\{(r,s)\in\mathbb{R}^2;\ 0\le r<R(s), \  0\le s\le s_0 \bigr\}
\quad\hbox{ and }\quad D_\Gamma=M(Q_\Gamma).
\end{equation}
We observe that $D_\Gamma$ is the region bordered by the four curves: $\Gamma$, the $y$-axis, the normal line at $\gamma(s_0)$
and, from above, the evolute of $\Gamma$, i.e.~the locus of the curvature centers 
\begin{equation}\label{defCurvatureCenter}
C(s)=\gamma(s)+R(s)N(s).
\end{equation}

\begin{figure}[h]
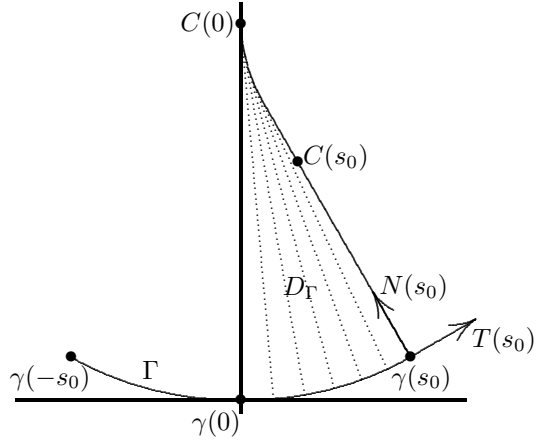

\[
\beginpicture
\setcoordinatesystem units <1cm,1cm>
\setplotarea x from -3 to 3, y from -0.2 to 5.3

\setdots <0pt>
\linethickness=1pt
\putrule from -3 0 to 3 0
\putrule from 0 -0.2 to 0 5.3

\circulararc 5 degrees from 0 0  center at 0 5
\circulararc 5 degrees from 0.43578 0.01903  center at 0.01743 4.80076
\circulararc 5 degrees from 0.85094 0.07368  center at 0.05216 4.60380
\circulararc 5 degrees from 1.24273 0.16054  center at 0.10392 4.41061
\circulararc 5 degrees from 1.60881 0.27597  center at 0.17233 4.22268
\circulararc 5 degrees from 1.94733 0.41618  center at 0.25685 4.04141

\circulararc -5 degrees from 0 0  center at 0 5
\circulararc -5 degrees from -0.43578 0.01903  center at -0.01743 4.80076
\circulararc -5 degrees from -0.85094 0.07368  center at -0.05216 4.60380
\circulararc -5 degrees from -1.24273 0.16054  center at -0.10392 4.41061
\circulararc -5 degrees from -1.60881 0.27597  center at -0.17233 4.22268
\circulararc -5 degrees from -1.94733 0.41618  center at -0.25685 4.04141

\setdots <0pt>
\setlinear 
\plot 2.25685 0.57731  0.75685 3.17539 /

\plot 0 5  0.01743 4.80076 /
\plot 0.01743 4.80076  0.05216 4.60380 /
\plot 0.05216 4.60380  0.10392 4.41061 /
\plot 0.10392 4.41061  0.17233 4.22268 /
\plot 0.17233 4.22268  0.25685 4.04141 /
\plot 0.25685 4.04141  0.75685 3.17539 /

\setdots <2pt>
\setlinear
\plot 0.43578 0.01903  0.01743 4.80076 /
\plot 0.85094 0.07368  0.05216 4.60380 /
\plot 1.24273 0.16054  0.10392 4.41061 /
\plot 1.60881 0.27597  0.17233 4.22268 /
\plot 1.94733 0.41618  0.25685 4.04141 /

\put{$\Gamma$}[cc] at -1.2 0.4
\put{$\gamma (s_0)$}[lc] at 2 0.3
\put{$\gamma (-s_0)$}[rc] at -2 0.3
\put{$\gamma (0)$}[rc] at 0 -0.3
\put{$C (0)$}[rc] at -0.075 5
\put{$C (s_0)$}[lc] at 0.831 3.23
\put{$\bullet$}[cc] at 0 5
\put{$\bullet$}[cc] at 0.75685 3.17539
\put{$\bullet$}[cc] at 0 0
\put{$\bullet$}[cc] at 2.25685 0.57731
\put{$\bullet$}[cc] at -2.25685 0.57731
\put{$D_\Gamma$}[cc] at 0.8 1.5
\put{$T (s_0)$}[cc] at 3.5 0.8
\put{$N (s_0)$}[cc] at 2.3 1.5

\setdots <0pt>
\thicklines
\arrow <10pt> [.2,.67] from 2.25685 0.57731 to 1.7569 1.4433
\arrow <10pt> [.2,.67] from 2.25685 0.57731 to 3.1229 1.0773

\endpicture
\]
\caption{Example of $\Gamma,\gamma(s),T(s),N(s)$ as in Notation \ref{notation1} and $D_\Gamma,C(s)$ defined in \eqref{defQGamma}, \eqref{defCurvatureCenter}.}
\label{Fig Notation 1}
\end{figure}

The following proposition shows that the region $D_\Gamma$ is well parametrized by $M$ and, consequently,
that one can define there the derivative $u_s$, in the parallel direction to the boundary.
Although this fact is more or less standard, we give a proof in Section~\ref{computations in flow coordinates} for convenience.

\begin{proposition}\label{naturalregion}
Let $\Omega,\Gamma,\gamma,M$ be as in Notation~\ref{notation1} and
assume \eqref{increasing curvature}, \eqref{technical condition}. 

(i) Then, the map $M$
is a diffeomorphism from $Q_\Gamma$ to $D_\Gamma$.

(ii) As a consequence, for any solution $u$ of \eqref{Hamilton-Jacobi problem}, the derivative 
$$u_s:=\frac{\partial}{\partial s}\bigl[u(M(r,s),t)\bigr]$$
is well defined in $(\overline\Omega\cap D_\Gamma)\times [0,T(u_0))$.
\end{proposition}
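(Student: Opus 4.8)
The plan is to prove (i) first, namely that $M$ is a diffeomorphism from $Q_\Gamma$ to $D_\Gamma$, and then deduce (ii) as an immediate consequence. The natural starting point is the Jacobian computation. Using the Frenet-type relations $N'(s) = -K(s) T(s)$ (which follows from differentiating $N = T^\perp$ together with $T'(s) = (\alpha'',\beta'') = K(s) N(s)$, the latter being exactly the curvature definition $K = \det(\gamma',\gamma'')$ combined with $|\gamma'|=1$), we compute
\begin{equation*}
\partial_r M = N(s), \qquad \partial_s M = T(s) + r N'(s) = (1 - r K(s)) T(s).
\end{equation*}
Since $\{T(s), N(s)\}$ is an orthonormal frame, the Jacobian determinant is $\det(\partial_r M, \partial_s M) = (1 - rK(s))$ up to sign, which is strictly positive exactly on the set $\{ r < 1/K(s) = R(s)\}$ when $K(s) > 0$, and everywhere when $K(s) = 0$. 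Thus on the interior of $Q_\Gamma$ the map $M$ is a local diffeomorphism. The key point for which the hypothesis $K(0) \ge 0$, $K'(s) \ge 0$ is needed is precisely that $R(s)$ is nonincreasing, so the region $Q_\Gamma$ is bounded from above by the single curve $r = R(s)$ without the curvature centers folding back into the region; this is what makes $Q_\Gamma$ a well-behaved (essentially star-shaped in $r$) domain.

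Next I would upgrade local injectivity to global injectivity on $Q_\Gamma$. Suppose $M(r_1,s_1) = M(r_2,s_2)$ with $s_1 \le s_2$. The point $M(r_i,s_i)$ lies on the inward normal ray to $\partial\Omega$ at $\gamma(s_i)$, at distance $r_i < R(s_i)$ from the boundary. The standard fact is that within the distance $R(s)$, this normal ray realizes the distance to $\partial\Omega$, i.e.\ $\delta(M(r,s)) = r$ and the nearest boundary point is unique and equal to $\gamma(s)$ — this is exactly the statement that the focal distance (first point where neighboring normals cross) along the normal at $\gamma(s)$ equals $R(s)$, and the monotonicity of $R$ from \eqref{increasing curvature} guarantees no normal ray from a point $\gamma(s')$ with $s' < s$ reaches $\gamma(s)$'s ray before its own focal point. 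Here condition \eqref{technical condition}, $\alpha'(s), \beta'(s) > 0$ for $s \in (0,s_0)$, enters: it says $\Gamma$ is a graph-type monotone arc (moving into the first quadrant), so the normals genuinely spread apart and two distinct base points $\gamma(s_1) \ne \gamma(s_2)$ cannot produce the same image inside $Q_\Gamma$. Combined with the symmetry \eqref{symmetry of the domain} one only needs to argue on $s \ge 0$. This forces $\gamma(s_1) = \gamma(s_2)$, hence $s_1 = s_2$ (arclength parametrization is injective) and then $r_1 = r_2$. So $M$ is a continuous bijection from the compact set $Q_\Gamma$ onto $D_\Gamma = M(Q_\Gamma)$, with everywhere-nonvanishing Jacobian on the interior; by the inverse function theorem it is a diffeomorphism there, and smoothness up to the boundary of $Q_\Gamma$ follows from the smoothness of $\gamma$ and the explicit formula for $M$ together with the non-vanishing Jacobian on the relevant boundary portion ($r=0$ and the lateral edges $s = 0, s_0$, where $1 - rK(s) > 0$ since $r = 0$ or, on $s=0,s_0$, $r < R(s)$).

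For part (ii), once $M : Q_\Gamma \to D_\Gamma$ is a diffeomorphism, the function $(r,s,t) \mapsto u(M(r,s),t)$ is well defined and as smooth in $(r,s)$ as $u$ is in $(x,y)$ on $D_\Gamma \cap \overline\Omega$ for each $t \in [0,T(u_0))$ — recall $u \in C^{2,1}$ in the interior and $C^{1,0}$ up to $t=0$ and up to $\partial\Omega$, and $M$ is smooth — so $u_s = \partial_s[u(M(r,s),t)] = \nabla u(M(r,s),t) \cdot \partial_s M = (1-rK(s))\, \nabla u \cdot T(s)$ is a well-defined continuous function on $(\overline\Omega \cap D_\Gamma) \times [0,T(u_0))$. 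The main obstacle, as usual for boundary-fitted coordinates, is the global injectivity step: ruling out that two distinct normal rays to $\partial\Omega$ intersect inside $D_\Gamma$. I expect this to require careful bookkeeping with the evolute (the locus of curvature centers $C(s)$) and a convexity/monotonicity argument using \eqref{increasing curvature}–\eqref{technical condition} to confine the intersections of normals strictly above the region $D_\Gamma$; the Jacobian and regularity parts are routine Frenet-frame computations.
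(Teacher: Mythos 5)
Your Jacobian computation, the Frenet relations $T'=KN$, $N'=-KT$, and the deduction of (ii) from (i) all match the paper and are fine. The problem is that the heart of the proposition --- global injectivity of $M$ on $Q_\Gamma$ --- is asserted rather than proved. You invoke as a ``standard fact'' that for $r<R(s)$ one has $\delta(M(r,s))=r$ with unique nearest boundary point $\gamma(s)$, on the grounds that the focal distance along the normal at $\gamma(s)$ equals $R(s)$. That implication is false in general: staying below the focal distance only makes the normal map a \emph{local} diffeomorphism; normals based at two distinct, non-neighbouring points of a curve can perfectly well cross before either reaches its own centre of curvature (the reach of a curve can be strictly smaller than its minimal radius of curvature). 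So you are invoking a stronger statement (about the distance function to all of $\partial\Omega$) that does not follow from $r<R(s)$ alone, in order to prove the weaker statement actually needed (injectivity of $M$ on $Q_\Gamma$). Your remarks that \eqref{increasing curvature} makes $R$ nonincreasing and that \eqref{technical condition} makes the normals ``spread apart'' correctly identify which hypotheses must enter, but they are not an argument; indeed you concede at the end that the non-crossing of normals ``requires careful bookkeeping with the evolute'' which you do not carry out. Since the Jacobian step and part (ii) are routine, this missing step is essentially the entire content of the proposition.

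For comparison, the paper closes exactly this gap with a short quantitative argument: writing $D_\Gamma=\bigcup_s \Sigma(s)$ with $\Sigma(s)=[\gamma(s),C(s))$, it shows that for $s_1<s_2$ the whole segment $\Sigma(s_2)$ lies in the closed half-plane to the right of the line $\Lambda_{s_1}$ supporting $\Sigma(s_1)$ (strictly for the endpoint $\gamma(s_2)$). Concretely, $\frac{d}{ds}\bigl(T(s_1)\cdot\gamma(s)\bigr)=T(s_1)\cdot T(s)>0$ by \eqref{technical condition} handles the base points, and $C'(s)=R'(s)N(s)$ together with $R'\le 0$ (from \eqref{increasing curvature}) and $T(s_1)\cdot N(s)\le 0$ handles the curvature centres, giving $T(s_1)\cdot(C(s_2)-\gamma(s_1))\ge 0$. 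You would need to supply this (or an equivalent) computation to complete your proof.
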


The following result ensures that single-point GBU occurs  
for symmetric solutions satisfying 
a monotonicity condition near the origin.

\begin{theorem}\label{local result}
Let $p>2$,
let $\Omega,\Gamma,\gamma,M$ be as in Notation~\ref{notation1} and
assume \eqref{increasing curvature}, \eqref{technical condition}.
Let $u_0\in X_+$ be a symmetric function with respect to the line $x=0$, such that~$T=T(u_0)<\infty$. 
Suppose that
\begin{equation}\label{GBUS localisation}
GBUS (u_0) \subset \gamma\bigl( -\textstyle\frac{s_0}{2}, \textstyle\frac{s_0}{2}\bigr)
\end{equation}
and that, for some $t_0\in (0,T)$, $r_0\in (0,R(s_0))$, we have
\begin{equation}\label{monotonicity condition}
 u_x, u_s < 0 \quad \text{in} \ \omega_0\times (t_0,T), \ \text{with}\ 
\omega_0 := \Omega\cap M\big( (0,r_0) \times (0, s_0)\big).
\end{equation}
Then, $GBUS(u_0)$ contains only the origin.
\end{theorem}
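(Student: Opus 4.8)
The plan is to reduce the statement to a local non‑degeneracy criterion applied at each point $\gamma(\bar s)$, $\bar s\in(0,\tfrac{s_0}{2})$. By \eqref{GBUS localisation}, $GBUS(u_0)\subset\gamma\big(-\tfrac{s_0}{2},\tfrac{s_0}{2}\big)\subset\Gamma$; and since $u_0$ and $\Omega$ are symmetric about $x=0$, so are $u$ and $GBUS(u_0)$. Hence it suffices to prove that $\gamma(\bar s)\notin GBUS(u_0)$ for every $\bar s\in(0,\tfrac{s_0}{2})$, the origin $\gamma(0)=(0,0)$ being the only point possibly left. Fixing such a $\bar s$, the goal is to produce, for $r>0$ small and $t$ close to $T$, an upper bound on $u(M(r,\bar s),t)$ lying strictly below the critical gradient‑blow‑up rate $r^{(p-2)/(p-1)}$; feeding it into Lemma \ref{nondegeneracy lemma} then rules out GBU at $\gamma(\bar s)$. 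As $T<\infty$ forces $GBUS(u_0)\ne\emptyset$, this gives $GBUS(u_0)=\{(0,0)\}$.

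The core of the argument is a one‑sided bound on $u_s$ proved by the maximum principle. Working in the $(r,s)$ coordinates of Proposition \ref{naturalregion}, I would consider an auxiliary function
\[
J:=u_s+c\,h(r,s)\,\zeta
\]
on a cylinder $\omega_0'\times(t_*,T)$, where $\omega_0'\subset\omega_0$, $c>0$ is small, $\zeta\in\{u,\;u_r\}$, and $h\ge 0$ is a smooth weight vanishing on the two parts of $\partial\omega_0'$ where $u_s$ automatically vanishes: on $\partial\Omega\cap\Gamma=\{r=0\}$ (since $u\equiv0$ there) and on the axis $\{s=0\}$ (since $s\mapsto u(M(r,s),t)$ is even, by symmetry of $u$) — a natural candidate being $h(r,s)=\alpha(s)\,r$, up to lower‑order corrections. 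Using the formulas for $\Delta$ and $\partial_t$ in boundary‑fitted coordinates (Section \ref{computations in flow coordinates}), I would derive the parabolic equation for $J$, then invoke \eqref{monotonicity condition} together with \eqref{increasing curvature} and \eqref{technical condition} to absorb the lower‑order terms coming from the non‑constant curvature and from the Jacobian factor $1-rK(s)$, reaching an inequality $J_t-\mathcal{L}J\le 0$ with $\mathcal L$ locally uniformly parabolic. Checking $J\le 0$ on the parabolic boundary — on $\{r=0\}$ and $\{s=0\}$ via the vanishing of $h$ and $u_s$, on $\{t=t_*\}$ by taking $c$ small, and on the remaining part after shrinking $\omega_0'$ and using the regularity of $u$ near $\gamma(s_0)$ provided by \eqref{GBUS localisation} — the maximum principle yields $J\le 0$, i.e. $u_s\le -c\,h(r,s)\,\zeta$ in $\omega_0'\times(t_*,T)$.

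The final step is to integrate this inequality along the coordinate curves parallel to the boundary, $\sigma\mapsto M(r,\sigma)$ at fixed $r$. With $\zeta=u_r$ and $h=\alpha(s)r$, the bound $u_s\le -c\,\alpha(s)\,r\,u_r$ says that $u$ is nonincreasing, in $\sigma$, along the curves solving $\dot r=c\,\alpha(\sigma)\,r$; tracing the one through $(r,\bar s)$ down to $\sigma=0$ gives
\[
u(M(r,\bar s),t)\;\le\;u\big(0,\;r\,e^{-L(\bar s)},\;t\big),\qquad L(\bar s):=c\!\int_0^{\bar s}\!\alpha(\sigma)\,d\sigma\;>\;0,
\]
with the base point on the symmetry axis at distance $r\,e^{-L(\bar s)}$ from $\partial\Omega$. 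Bounding this base value by integrating \eqref{bernstein estimate 1} along the inward normal at $\gamma(0)$ gives $u(M(r,\bar s),t)\le\Lambda(\bar s)\,r^{(p-2)/(p-1)}(1+o(1))$ as $r\to0$, uniformly for $t$ near $T$, with $\Lambda(\bar s)$ smaller than the baseline constant by the factor $e^{-L(\bar s)(p-2)/(p-1)}<1$. Since at a GBU point $u$ must reach the critical rate with the sharp constant, this strictly‑less‑than‑one factor is exactly what Lemma \ref{nondegeneracy lemma} needs to conclude $\gamma(\bar s)\notin GBUS(u_0)$. (With $\zeta=u$ one gets instead $u(M(r,\bar s),t)\le(1-\eta(\bar s))\,u(0,r,t)$, $\eta(\bar s)\in(0,1)$, serving the same purpose.)

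I expect the main obstacle to be the second step: deriving and correctly signing the parabolic equation for $J$ in boundary‑fitted coordinates. Expressing $\Delta u$ and $u_t$ in $(r,s)$ generates a long list of first‑ and zeroth‑order terms with coefficients built from $K(s)$, $K'(s)$, $\alpha'$, $\beta'$ and $1-rK(s)$, and controlling all of them simultaneously — with the right weight $h$ and a small enough $c$, using only \eqref{increasing curvature}--\eqref{technical condition} and \eqref{monotonicity condition} — is the long, delicate computation advertised in the abstract, and is precisely where going from constant to merely monotone curvature must be handled with care. A secondary difficulty is the lateral face $\{s=s_0\}$ of the maximum‑principle cylinder, where \eqref{monotonicity condition} alone does not pin down the sign of $J$ and one must use the extra regularity of $u$ near $\gamma(s_0)$ from \eqref{GBUS localisation}, possibly adding a small negative corrector to $J$.
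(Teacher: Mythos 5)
Your overall architecture (a Friedman--McLeod auxiliary function for $u_s$ in boundary-fitted coordinates, a maximum principle, an integration in $s$, and Lemma \ref{nondegeneracy lemma}) is the paper's architecture, but the quantitative endgame as you set it up does not close. With the weight $h=\alpha(s)r$ and $\zeta=u$, the inequality $u_s\le -c\,\alpha(s)\,r\,u$ integrates to $u(r,\bar s)\le u(r,0)\,e^{-crL(\bar s)}$, and the gain factor tends to $1$ as $r\to 0$, i.e.\ exactly where the nondegeneracy analysis lives you get nothing. With $\zeta=u_r$ the transport argument does yield $u(M(r,\bar s),t)\le u(M(re^{-L},0),t)$ and hence a constant improved by the fixed factor $e^{-L(p-2)/(p-1)}<1$; but Lemma \ref{nondegeneracy lemma} requires the bound $u\le c_0(p)\,\delta^{(p-2)/(p-1)}$ with a \emph{specific} constant $c_0(p)$, whereas your baseline constant is the (unrelated, typically much larger) Bernstein constant from \eqref{bernstein estimate 1}; multiplying it by a factor bounded away from $0$ need not bring it below $c_0(p)$, and no ``sharp constant at a GBU point'' is available to rescue this. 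The paper avoids the issue by improving the \emph{exponent}, not the constant: it takes $J=\frac{u_s}{1-rK}+k(s-\eta)\,r^{-\gamma}u^{q}$ with a singular weight $r^{-\gamma}$ and a superlinear $u^q$ ($q>1$, $\gamma=(1-2\sigma)(q-1)$, $\sigma<\frac{1}{2(p-1)}$), so that $J\le0$ gives the separable inequality $u^{-q}u_s\le -\frac{k}{2}(s-\eta)r^{-\gamma}$, which integrates to $u\le C(s-\eta)^{-2/(q-1)}r^{1-2\sigma}$ with $1-2\sigma>\frac{p-2}{p-1}$; then $u\,\delta^{-(p-2)/(p-1)}\to0$ and the lemma applies regardless of constants.

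The second gap concerns the curvature term you propose to ``absorb''. The equation for $w=\frac{u_s}{1-rK}$ (Lemma \ref{parabolic eq for u_s lemma}) contains the inhomogeneous term $\frac{K'}{(1-rK)^3}\frac{1}{\beta'}u_x$, coming from the failure of $\Delta$ to commute with $\partial_s$, and this term cannot be absorbed into the zeroth/first-order coefficients of $J$ by any choice of weight: it must be \emph{dominated quantitatively}. The paper does this by running a second, parallel maximum-principle argument for $\bar J=u_x+ks\,r^{-\gamma}u^q$, whose negativity (established first, using the corner Lemma \ref{Serrin corner lemma 2} to verify the initial condition $u_x\le-\tilde c_1 rs$ at $t=t_1$) makes the right-hand side of the inequality $\mathcal{P}J\le \frac{K'}{\beta'(1-rK)^3}\bar J$ nonpositive. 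Your single-function scheme, and the vague ``small negative corrector'', do not account for this; it is the main structural novelty needed to pass from constant to monotone curvature, and it is also why the hypothesis $u_x<0$ appears in \eqref{monotonicity condition} at all.
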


Hypothesis \eqref{GBUS localisation} is not difficult to guarantee. 
It is in fact satisfied whenever $u_0$ is sufficiently concentrated near the origin
(cf.~\cite{Li-Souplet} and Proposition \ref{GBU location lemma} below).
On the contrary, the hypothesis $u_s<0$ in \eqref{monotonicity condition} is in general
more difficult to verify, 
and requires assumptions of more global nature. 

The assumption $u_x<0$ in \eqref{monotonicity condition} is required by the fact that the Laplace operator does not commute with the derivative in the $s-$direction.
Therefore, we need to control a term involving $u_r$.
This can be done by writing $u_r$ as a linear combination of $u_x$ and $u_s$, see formula \eqref{r derivative in temrs of x and s derivatives}.
The term $u_x$ is obviously more tractable since the $x-$derivative does commute with the Laplace operator.
This requires the use of two auxiliary functions $J$ and $\bar J$ in the proof of this Theorem (section \ref{proof of local result}), 
the first to control $u_s$ and the second to control $u_x$. 
The derivation and analysis of the parabolic equations satisfied by $J$ and $\bar J$ necessitate
 long and technical calcutions involving boundary-fitted coordinates.

We next introduce the geometric hypotheses on the domain $\Omega$ under which we are able to construct initial data satisfying
condition \eqref{monotonicity condition}. 
To this end we set 
the following further notation,
which is motivated by moving plane arguments that we rely on.

\begin{notation}\label{notation2} For each $s\in [0,s_0]$, we denote

$\bullet$ $\Lambda_s$ the line $\gamma(s)+\mathbb{R}N(s)$

$\bullet$ $\mathcal{T}_s(\cdot)$ the symmetry with respect to $\Lambda_s$

$\bullet$ $H_s$ the half-plane at the right of the line $\Lambda_s$, i.e.:
$$H_s=\{P\in \mathbb{R}^2;\ T(s)\cdot (P-\gamma(s))>0\}.$$

$\bullet$ $\Omega_s=\Omega\cap H_s$.
\end{notation}

Using Notations \ref{notation1} and \ref{notation2}, the hypotheses that we shall assume are the following: 
\begin{equation}\label{centerout}
\overline\omega_0\subset D_\Gamma,
\quad\hbox{ where $\omega_0:=\Omega\cap D_\Gamma \cap \{y<y_0\}$, for some $y_0\in (0,\infty]$,}
\end{equation}
\begin{equation}\label{x convexity}
\begin{array}{l}
\nu_x \geq 0 \quad \text{on $\partial\Omega\cap \{x>0\}$,}
\end{array}
\end{equation}
\begin{equation}\label{thin domain condition 2 1}
\begin{array}{l}
\nu_y \geq 0 \ \text{on} \ \partial\Omega\cap\partial\omega_0 \cap \{r>0\},
\end{array}
\end{equation}
\begin{equation}\label{thin domain condition 2 2}
\mathcal{T}_{s_0}(\Omega_{s_0}) \subset\Omega,
\end{equation}
\begin{equation}\label{Omega plus}
\begin{array}{l}
\mathcal{T}_+(\Omega^+) \subset \Omega, \quad \text{where} \ \Omega^+ := \Omega\cap \{y>y_0\},  \\
\text{and $\mathcal{T}_+(\cdot)$ is the symmetry with respect to the line $y=y_0$}.
\end{array}
\end{equation}
See Figure \ref{figure second class} in section \ref{Introduction} and Figures \ref{figure moving planes 1} and \ref{figure moving planes 2} in section \ref{proof of main result} for examples of domains satisfying these hypotheses.
In view of Proposition~\ref{naturalregion}, assumption \eqref{centerout} ensures that $u_s$ is well defined in $\overline\omega_0$.
Our result reads as follows.

\begin{theorem}\label{second class theorem}
Let $p>2$ and 
let $\Omega,\gamma, s_0,\mathcal{T}_s,\Omega_s$ be as in Notations~\ref{notation1} and \ref{notation2}.
Let $D_\Gamma$ be defined by \eqref{defQGamma} and
assume \eqref{increasing curvature}, \eqref{technical condition},
\eqref{centerout}--\eqref{Omega plus}.
\begin{enumerate}
\item There exist initial data $u_0\in X_+$ such that $T(u_0)<\infty$ 
and
\begin{equation}
\label{u_0 symmetric second class}
u_0 \ \text{is symmetric with respect to the line} \ x=0,
\end{equation}
\begin{equation}
\label{u_0 decreasing hypothesis second class}
u_{0,x} \leq 0 \ \text{in} \ \Omega \cap \{x>0\} \quad \text{and} \quad u_{0,s} \leq 0 \ \text{in} \ \omega_0,
\end{equation}
\begin{equation}
\label{u_0 moving planes hyp second class}
\begin{array}{l}
u_0(P)\leq u_0(\mathcal{T}_{s_0}(P)) \quad\hbox{ for all $P \in \Omega_{s_0}$},
\end{array}
\end{equation}
\begin{equation}
\label{u_0 moving planes hyp first class}
u_0(P)\leq u_0(\mathcal{T}_+(P))\quad\hbox{ for all $P \in \Omega\cap \{y>y_0\}$}.
\end{equation}
\begin{equation}
\label{GBUS location second class}
GBUS (u_0) \subset \gamma\bigl( -\textstyle\frac{s_0}{2}, \textstyle\frac{s_0}{2}\bigr).
\end{equation}
\item For any such $u_0$, 
$GBUS(u_0)$ contains only the origin. 
\end{enumerate}
\end{theorem}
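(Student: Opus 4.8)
The plan is to obtain part~(2) as a corollary of Theorem~\ref{local result} after verifying its hypotheses, and to obtain part~(1) by an explicit construction coupled with the known gradient blow-up criteria for \eqref{Hamilton-Jacobi problem}.

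For part~(1) I would fix once and for all a nonnegative profile $\zeta\in C^1(\overline\Omega)$ vanishing on $\partial\Omega$, symmetric with respect to $\{x=0\}$, supported in a small neighbourhood of the origin, and chosen so that $\zeta$ satisfies \eqref{u_0 decreasing hypothesis second class}; for instance a suitably cut-off multiple of $(a(y)^2-x^2)h(y)$, where $(-a(y),a(y))$ are the horizontal cross-sections of $\Omega$ near the origin and $h\ge 0$ is concentrated near $y=0$, realizes the required monotonicity. If the support of $\zeta$ is taken small enough, then $\mathcal{T}_{s_0}(\Omega_{s_0})$ and $\mathcal{T}_+(\Omega^+)$ are disjoint from it, so the reflection inequalities \eqref{u_0 moving planes hyp second class}--\eqref{u_0 moving planes hyp first class} hold trivially. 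Setting $u_0=\lambda\zeta$, all of \eqref{u_0 symmetric second class}--\eqref{u_0 moving planes hyp first class} are preserved, being invariant under multiplication by a positive constant; $T(u_0)<\infty$ holds for $\lambda$ large by the standard gradient blow-up criteria (see \cite{Souplet-Zhang}, \cite{Li-Souplet}); and \eqref{GBUS location second class} follows from Proposition~\ref{GBU location lemma} by first fixing the concentration scale of $\zeta$ and then enlarging $\lambda$.

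For part~(2), let $u$ be the solution with such an initial datum; by uniqueness it remains symmetric in $x$ for all $t$, by \eqref{centerout} and Proposition~\ref{naturalregion} the derivative $u_s$ is well defined on $\overline\omega_0$, and \eqref{GBUS location second class} is exactly \eqref{GBUS localisation}. Hence it suffices to establish the monotonicity \eqref{monotonicity condition}, i.e. $u_x,u_s<0$ in $\omega_0\times(t_0,T)$ for suitable $r_0\in(0,R(s_0))$ and $t_0\in(0,T)$. The sign of $u_x$ comes from a moving-plane argument with the vertical lines $\{x=\lambda\}$, $\lambda>0$: the reflection of $\Omega\cap\{x>\lambda\}$ lies in $\Omega$ by the $x$-convexity underlying \eqref{x convexity}, the corresponding reflection inequality holds at $t=0$ by \eqref{u_0 decreasing hypothesis second class}, and it propagates for $t>0$ by the maximum principle applied to the difference of $u$ and its reflection; the strong maximum principle and Hopf's lemma then give $u_x<0$ on $\Omega\cap\{x>0\}$ for $t>0$. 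For $u_s$ I would run the analogous argument along the curvilinear family of normal lines $\Lambda_s$, $s\in(0,s_0]$: using \eqref{thin domain condition 2 2} together with the monotonicity of the curvature in \eqref{increasing curvature} one first checks the geometric fact that $\mathcal{T}_s(\Omega_s)\subset\Omega$ for every $s\in[0,s_0]$, so that $u\circ\mathcal{T}_s-u$ is a subsolution of a linear parabolic equation on $\Omega_s$ whose boundary and initial data are nonnegative by \eqref{u_0 moving planes hyp second class} (propagated in $s$ using $u_{0,s}\le0$), by \eqref{thin domain condition 2 1} and Hopf's lemma on $\partial\Omega\cap\partial\omega_0$, and by the auxiliary reflection about $\{y=y_0\}$ supplied by \eqref{Omega plus}--\eqref{u_0 moving planes hyp first class} on the part of $\partial\Omega_s$ lying in $\{y>y_0\}$. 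The maximum principle then yields $u\circ\mathcal{T}_s\ge u$ on $\Omega_s$ for all $t$, hence $\nabla u\cdot T(s)\le0$ along $\Lambda_s\cap\Omega$; since a short Frenet computation gives $u_s=(1-rK(s))\,\nabla u\cdot T(s)$ with $1-rK(s)>0$ on $Q_\Gamma$ (by \eqref{increasing curvature}, $K\ge0$ and $r<R(s)=1/K(s)$ there), the strong maximum principle upgrades this to $u_s<0$ in $\omega_0\times(t_0,T)$. With \eqref{monotonicity condition} in hand, Theorem~\ref{local result} applies and gives $GBUS(u_0)=\{(0,0)\}$.

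The delicate step is the one for $u_s$. The lines $\Lambda_s$ are not symmetry axes of $\Omega$, so the admissibility $\mathcal{T}_s(\Omega_s)\subset\Omega$ of the reflections one wants to use rests on the precise interplay of local convexity, monotone curvature, and the geometric hypotheses \eqref{centerout}--\eqref{Omega plus}; and, more seriously, once the parabolic operator and the quantity $\nabla u\cdot T(s)$ are rewritten in the boundary-fitted coordinates $(r,s)$ one is confronted with a linear parabolic inequality carrying numerous additional first-order terms proportional to $K$ and $K'$, whose signs must all be tracked. This is precisely the source of the long boundary-fitted computations announced in the introduction and deferred to the later sections.
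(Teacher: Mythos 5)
Your part~(2) argument for $u_s$ has a genuine gap. You propose to run a moving-plane argument along the whole family $\Lambda_s$, $s\in(0,s_0]$, which requires both the geometric inclusion $\mathcal{T}_s(\Omega_s)\subset\Omega$ and the initial comparison $u_0\le u_0\circ\mathcal{T}_s$ on $\Omega_s$ for \emph{every} such $s$. Neither is available: hypothesis \eqref{thin domain condition 2 2} is a reflection condition only at the single value $s=s_0$, and it together with \eqref{increasing curvature} does not yield $\mathcal{T}_s(\Omega_s)\subset\Omega$ for intermediate $s$ (monotone curvature controls only the boundary piece near $\Gamma$, not the far part of $\Omega_s$ — see the bumpy domain of Figure~\ref{figure second class}); likewise \eqref{u_0 moving planes hyp second class} is assumed only at $s_0$, and $u_{0,s}\le0$ is assumed only in $\omega_0$, so it cannot be ``propagated in $s$'' to points of $\Omega_s$ outside $D_\Gamma$ where $u_{0,s}$ is not even defined. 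Indeed, if your sweeping argument worked it would give $u_s\le0$ throughout $\omega_0$ directly, making the boundary-fitted machinery unnecessary; Remark~\ref{remark general result}(iii) points out that the moving-plane step is available \emph{only} under \eqref{thin domain condition 2 2}, i.e.\ at $s_0$. The paper's actual route is: first prove $u_x<0$ in $\Omega\cap\{x>0\}$ (by the maximum principle applied to $u_x$ itself, using \eqref{x convexity} and Hopf's lemma for $u$ on the boundary — your vertical moving planes would also work here); then apply the maximum principle to $w=u_s/(1-rK)$ in $\omega_0$, whose equation (Lemma~\ref{parabolic eq for u_s lemma}) carries the source term $\frac{K'}{(1-rK)^3}\frac{u_x}{\beta'}\le0$ — signed precisely because $u_x<0$, $K'\ge0$, $\beta'>0$ — with $u_s\le0$ on $\partial\omega_0$ checked piecewise: zero on $\Gamma$ and on $\{x=0\}$, sign from \eqref{thin domain condition 2 1} on the outer boundary piece, and moving planes used only for the two fixed reflections across $\Lambda_{s_0}$ and $\{y=y_0\}$.

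A secondary issue in your part~(1): ``enlarging $\lambda$'' to force $T(u_0)<\infty$ is incompatible with the smallness condition \eqref{L infinity u_0} that Proposition~\ref{GBU location lemma} needs for the localization \eqref{GBUS location second class} (the supersolution $c_2\psi$ only dominates $u_0$ if $\|u_0\|_\infty\le C_2$). The correct mechanism, as in the paper, is to keep the amplitude fixed at $C_2$ and shrink the concentration scale $\eps$ so that $\inf_{\tilde B_{\eps/2}}u_0\ge C_1\eps^k$ holds; blow-up then comes from the rescaled subsolution of Step~1 of that proposition, not from a large multiplicative constant. You would also need to actually verify $u_{0,s}\le0$ in $\omega_0$ for your profile rather than assert it; the paper does this by an explicit Taylor computation for a radial bump centered at $(0,\eps)$.
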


\begin{remark}\label{remark general result}
\begin{enumerate}
\item  If the domain $\Omega$ is sufficiently thin in the $y$-direction,
then the center of curvature of the boundary  lies outside $\Omega$ for all $s\in [0,s_0]$.
In that case we can consider $y_0 = +\infty$ in \eqref{centerout}  and 
conditions \eqref{Omega plus} and \eqref{u_0 moving planes hyp first class} disappear.
When this is not the case, we can
restrict $\omega_0$ to $\{y<y_0\}$, for some $y_0>0$,
in order to be able to define the boundary-fitted coordinates.
However, we then have to pay the price of assuming the reflection assumption \eqref{Omega plus}, which allows us to prove $u_y\leq 0$ on $\Omega\cap \{y=y_0\}$ by a moving planes argument.
\item Hypothesis \eqref{x convexity} implies that the domain is convex in the $x$ direction, 
and this, together with \eqref{symmetry of the domain}, 
allows one to construct solutions such that $u_x\leq 0$ in $\Omega\cap \{x>0\}$.
\item On the other hand, hypotheses \eqref{thin domain condition 2 1} and \eqref{thin domain condition 2 2} are useful to construct solutions such that $u_s<0$ in $\omega_0$.
In particular, hypothesis \eqref{thin domain condition 2 1} implies that on the upper piece of $\partial\omega_0$ which coincides
with $\partial\Omega$, $u_s$ represents the derivative in a direction pointing outside $\Omega$, and therefore $u_s\leq 0$.
Then, we prove that $u_s\leq 0$ on $\Lambda\cap \partial\omega_0$ by a moving planes argument, which can be applied only under hypothesis \eqref{thin domain condition 2 2}.
\item On $\partial\omega_0\cap \{y=y_0\}$, we prove $u_s\leq 0$ by expressing it as a linear combination of $u_x$ and $u_y$, that we can prove to be negative, see (i) and (ii) in this remark.
\end{enumerate}
\end{remark}

Observe that in figure \ref{figure second class} the domain is sufficiently thin so that we can consider $y_0=+\infty$.  
Ellipses with non-zero eccentricity, i.e.~ellipses which are not disks,
 are also examples of domains where it is possible to apply this result.
In that case, we choose $y_0$ such that the line $\{y = y_0\}$ coincides with the major axis of the ellipse.
The case of a disk is excluded since, in order to satisfy hypothesis \eqref{Omega plus}, we must consider an $y_0$ bigger or equal than the radius of curvature of the disk, but then, hypothesis \eqref{centerout} cannot hold.
However, the case of the disk can be treated using polar coordinates (see \cite{Li-Souplet}).

\begin{remark}\label{initial data remark}
Let $p>2$ and $\Omega$ be as in Theorem \ref{second class theorem},
denote $B_\rho^+:=B_\rho(0,0) \cap \{x>0\}$ and let $\rho>0$ be such that 
$$\Omega\cap B_\rho^+ \subset \omega_0,\quad \partial\Omega\cap B_\rho^+\subset \gamma(0,s_0/2).$$
It follows from Theorem \ref{second class theorem} and Proposition \ref{GBU location lemma} below that 
$T(u_0)<\infty$ and $GBUS (u_0)=\{(0,0)\}$ whenever $u_0\in X_+$ for instance satisfies
\eqref{u_0 symmetric second class}, \eqref{u_0 decreasing hypothesis second class} and
\begin{eqnarray*}
&&{\rm supp}(u_0)\subset \overline\Omega\cap \overline B_{\rho/2}, \\
&&\|u_0\|_\infty\le C_2, \\
&&\inf_{\tilde B_\eps} u_0\ge C_1\eps^k\quad\hbox{with $\tilde B_\eps=B_{\eps/2}(0,\eps)$, \ for some $\eps\in(0,\rho/2)$,  }
\end{eqnarray*} 
where  $C_1(p)>0$ and $C_2(p,\Omega,\rho)>0$.
Moreover, initial data satisfying these assumptions can be easily constructed.
See the proof of Theorem \ref{second class theorem}(i) for details.
\end{remark}

The outline of the rest of the paper is as follows.
In section \ref{computations in flow coordinates} we give some basic computations and notation on
the ``boundary-fitted'' curvilinear coordinate system
and we give the proof of Proposition \ref{naturalregion}.
In section \ref{previous results} we give some useful preliminary results, concerning nondegeneracy and localization of GBU as well as a Serrin type corner lemma.
Theorems \ref{local result} and \ref{second class theorem} are respectively proved in sections \ref{proof of local result}
and~\ref{proof of main result}.
Finally in section \ref{reduction section}, we deduce Theorems \ref{ellipses theorem} and \ref{almost flat theorem} from Theorem \ref{second class theorem}.

\section{Preliminary results I: basic computations in boundary-fitted curvilinear coordinates}\label{computations in flow coordinates}

In this section we give some basic computations in the coordinate system given by the map $M$ in \eqref{flow coordinates}.
Here $\Omega$ and $\Gamma$ are as in Notation \ref{notation1}
and we assume conditions \eqref{increasing curvature} and \eqref{technical condition}.
By Proposition \ref{naturalregion}, that we will prove at the end of this section, $M$ is a diffeomorphism from $Q_\Gamma$ to $D_\Gamma$,
where $Q_\Gamma$ and $D_\Gamma$ are defined in \eqref{defQGamma}. 
To facilitate the change of coordinates throughout the paper, we adopt the following notation and conventions.

\goodbreak 
\begin{notation}\label{notation3} 
For any function $\psi(x,y)$ defined on (a part of) $D_\Gamma$, we express $\psi$ in terms of the variables $(r,s)$ by setting
$$\tilde{\psi}:=\psi\circ M,$$
i.e. $\tilde{\psi} (r,s) = \psi (M(r,s))$ for $(r,s)\in Q_\Gamma$.
The derivatives with respect to the variables $(r,s)$ of a function $\psi \ =\psi(x,y)\in C^1(D_\Gamma)$ are then defined by
\begin{equation}\label{definition psirstilde}
\psi_r:=\tilde\psi_r,\qquad \psi_s:=\tilde\psi_s.
\end{equation}
Similarly, for any function $\varphi(r,s)$ defined on (a part of) $Q_\Gamma$, we denote 
$$\hat \varphi=\varphi\circ M^{-1}.$$
In the rest of the paper, for any functions $\psi=\psi(x,y)$ and $\varphi=\varphi(r,s)$, when no risk of confusion arises, 
we will drop the tilde and the hat and will just write $\psi(r,s)$ in place of $\tilde{\psi} (r,s)$ and $\varphi(x,y)$ in place of $\hat{\varphi} (x,y)$.

Also, the gradient and the Laplacian operators will always be understood as
$$\nabla \psi=(\psi_x, \psi_y)$$
and
$$\Delta\psi=div(\nabla \psi)=\psi_{xx}+\psi_{yy},$$
either as functions of $(x,y)$, or as functions of $(r,s)$ (i.e., implicitly considering 
$(\nabla \psi)\circ M$ and $(\Delta \psi)\circ M$).
\end{notation}

According to the chain rule, we have
\begin{equation}\label{definition psirs}
\psi_r = \nabla \psi (M(r,s)) \cdot N(s) \quad \text{and} \quad
\psi_s = \nabla \psi (M(r,s)) \cdot (\gamma'(s) + rN'(s)).
\end{equation}  
Using \eqref{K definition 2} and \eqref{K definition 3}, we obtain
$$
N'(s) = -K(s)(\alpha'(s),\beta'(s)) = -K(s) T(s),
$$
and then, we can rewrite \eqref{definition psirs} as 
\begin{equation}\label{r and s derivatives}
\psi_r = \nabla \psi \cdot N(s) \quad \text{and} \quad
\psi_s =  (1-rK(s)) \nabla\psi \cdot T(s).
\end{equation}
Note that 
\begin{equation}\label{rKs}
1-rK(s)>0\quad\hbox{ in $D_\Gamma$}
\end{equation}
owing to  \eqref{defRadius}, \eqref{defQGamma}.
Since the vectors $N(s)$ and $T(s)$ are orthonormal, we then have
\begin{equation}\label{gradient flow coordinates}
\begin{array}{l}
\nabla \psi (r,s) \,\equiv (\nabla \psi)\circ M=\psi_r N(s) + \dfrac{\psi_s}{1-rK(s)} T(s),
\end{array}
\end{equation}
as well as
\begin{equation}\label{diff op in flow coord2}
\nabla \psi \cdot \nabla \varphi = \psi_r \varphi_r + \dfrac{\psi_s \varphi_s}{(1-r K)^2}.
\end{equation} 
 
We next recall two alternative expressions for the function curvature of the boundary $K(s)$.  
Since $\gamma(s) = (\alpha(s),\beta(s))$ is an arclength parametrization, we have
$$\alpha'(s)\alpha''(s) + \beta'(s)\beta''(s) = \dfrac{(\alpha'(s)^2+\beta'(s)^2)'}{2} = 0,$$
and then we have $\alpha'(s) \alpha''(s) = -\beta'(s)\beta''(s)$. Using this identity, we can obtain
\begin{equation}\label{K definition 2}
K(s) = \alpha'(s)\beta''(s) - \beta'(s)\alpha''(s) = \alpha'(s)\beta''(s) + \dfrac{\beta'(s)\beta''(s)}{\alpha'(s)} = \dfrac{\beta''(s)}{\alpha'(s)},
\end{equation}
and in a similar way, recalling \eqref{technical condition}, we obtain
\begin{equation}\label{K definition 3}
K(s) = - \dfrac{\alpha''(s)}{\beta'(s)}, \ \quad s\ne 0.
\end{equation}

Now, we give some further identities relating the derivatives in boundary-fitted coordinates with the derivatives in cartesian coordinates. 
As we will see in our proofs, we have particular interest in expressing, when possible, 
$\psi_r$ as a linear combination of $\psi_x$ and $\psi_s$.
In the following computations, and without risk of confusion, we omit the dependence on $s$ of the functions $K,\alpha',\beta'$.
In view of \eqref{r and s derivatives}, we have
\begin{equation}\label{psi_r psi_s}
\begin{array}{rcl}
\psi_r &=& -\beta'\psi_x + \alpha'\psi_y, \\
 \noalign{\vskip 1mm}
\dfrac{\psi_s}{1-rK} &=& \alpha' \psi_x + \beta'\psi_y.
\end{array}
\end{equation}
Then, recalling \eqref{technical condition}, we obtain the identity
\begin{equation}\label{r derivative in temrs of x and s derivatives}
\psi_r = -\dfrac{1}{\beta'}\psi_x + \dfrac{\alpha'}{\beta'}\dfrac{\psi_s}{1-rK}, \ \quad s\ne 0.
\end{equation}
We note that it is possible to write $\psi_r$ as a linear combination of $\psi_x$ and $\psi_s$
only when $\beta'(s)\neq 0$
(i.e.,~$s\ne 0$). 
This makes sense since, if $\beta'(s)=0$, then $\psi_x = \psi_s$ and $\psi_r$ is the derivative in the $y$ direction, 
which is then orthogonal to the $x$ and $s$ directions.

The next result is a very useful expression of the Laplacian in flow coordinates.

\begin{proposition}\label{Lap in flow coord}
(i) Let $\psi=\psi(x,y) \in C^2(D_\Gamma)$. We have
\begin{equation}\label{diff op in flow coord}
\Delta \psi\equiv (\Delta \psi)\circ M= \psi_{rr} - \dfrac{K}{1-r K} \psi_r + \dfrac{1}{(1-r K)^2} \psi_{ss} + \dfrac{r K'}{(1-r K)^3} \psi_s,
\ \quad  (r,s)\in Q_\Gamma.
\end{equation}
(ii) If $\varphi=\varphi(r,s) \in C^2(Q_\Gamma)$, then 
$\Delta \varphi\equiv [\Delta (\varphi\circ M^{-1})]\circ M$ is also given by \eqref{diff op in flow coord}
with $\psi$ replaced by $\varphi$.
\end{proposition}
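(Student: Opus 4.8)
The plan is to derive the Laplacian expression \eqref{diff op in flow coord} directly from the metric of the map $M$, which amounts to writing out the Laplace--Beltrami operator in the orthogonal curvilinear coordinates $(r,s)$. First I would record the first fundamental form: from \eqref{definition psirs} and $N'(s)=-K(s)T(s)$ one has $M_r=N(s)$ and $M_s=(1-rK(s))T(s)$, so, since $N(s)\perp T(s)$ are unit vectors, the metric is diagonal with $E=|M_r|^2=1$, $F=M_r\cdot M_s=0$, $G=|M_s|^2=(1-rK(s))^2$. Thus $M$ is an orthogonal parametrization and the general formula for the Laplacian in orthogonal coordinates applies, namely
\[
\Delta\psi=\frac{1}{\sqrt{EG}}\left[\partial_r\!\left(\sqrt{\tfrac{G}{E}}\,\psi_r\right)+\partial_s\!\left(\sqrt{\tfrac{E}{G}}\,\psi_s\right)\right]
=\frac{1}{1-rK}\left[\partial_r\bigl((1-rK)\psi_r\bigr)+\partial_s\!\left(\frac{\psi_s}{1-rK}\right)\right].
\]

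Then I would simply expand the two terms. For the first, $\partial_r\bigl((1-rK(s))\psi_r\bigr)=-K\psi_r+(1-rK)\psi_{rr}$, and dividing by $(1-rK)$ gives $\psi_{rr}-\tfrac{K}{1-rK}\psi_r$. For the second, since $K=K(s)$ depends only on $s$, $\partial_s\!\bigl(\tfrac{\psi_s}{1-rK}\bigr)=\tfrac{\psi_{ss}}{1-rK}+\tfrac{rK'}{(1-rK)^2}\psi_s$, and dividing again by $(1-rK)$ produces $\tfrac{\psi_{ss}}{(1-rK)^2}+\tfrac{rK'}{(1-rK)^3}\psi_s$. Adding the two contributions yields exactly \eqref{diff op in flow coord}. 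Validity of all divisions is guaranteed by \eqref{rKs}, i.e. $1-rK>0$ on $Q_\Gamma$, which in turn rests on Proposition~\ref{naturalregion}; and $\psi\in C^2$ ensures $\tilde\psi=\psi\circ M\in C^2(Q_\Gamma)$ since $M$ is a $C^2$ diffeomorphism there by Proposition~\ref{naturalregion}, so all the partial derivatives above make sense and the chain rule is legitimate.

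For part (ii) there is essentially nothing extra to do: given $\varphi\in C^2(Q_\Gamma)$, the function $\hat\varphi=\varphi\circ M^{-1}$ lies in $C^2(D_\Gamma)$ because $M^{-1}$ is $C^2$, and applying part~(i) to $\psi=\hat\varphi$ gives $(\Delta\hat\varphi)\circ M$ in terms of the $(r,s)$-derivatives of $\hat\varphi\circ M=\varphi$, which is precisely the assertion.

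If one prefers to avoid quoting the orthogonal-coordinates formula as a black box, the same computation can be carried out by hand: invert the Jacobian of $M$ to express $\partial_x,\partial_y$ in terms of $\partial_r,\partial_s$ using \eqref{psi_r psi_s}, then compute $\psi_{xx}+\psi_{yy}$ directly, taking care that $T(s),N(s),\alpha',\beta',K$ all depend on $s$ and that $\alpha''=-\beta' K$, $\beta''=\alpha' K$ from \eqref{K definition 2}--\eqref{K definition 3}. The only genuine obstacle is bookkeeping: keeping track of which quantities are functions of $s$ alone versus of $(r,s)$, and correctly differentiating the factor $(1-rK(s))$, so that the second-derivative terms combine cleanly and the first-order term $\tfrac{rK'}{(1-rK)^3}\psi_s$ comes out with the right power of $(1-rK)$. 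I expect this routine but delicate algebra to be the main point where errors could creep in; the conceptual content is entirely contained in the observation that $(r,s)$ is an orthogonal coordinate system with metric coefficients $1$ and $(1-rK)^2$.
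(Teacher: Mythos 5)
Your proposal is correct, and it reaches \eqref{diff op in flow coord} by a genuinely different (and shorter) route than the paper. You observe that $M_r=N$ and $M_s=(1-rK)T$ make $(r,s)$ an orthogonal coordinate system with metric coefficients $E=1$, $F=0$, $G=(1-rK)^2$, and then invoke the classical formula for the Laplacian in orthogonal curvilinear coordinates; expanding the two terms indeed reproduces \eqref{diff op in flow coord} exactly, and the positivity $1-rK>0$ on $Q_\Gamma$ (which follows directly from $r<R(s)=1/K(s)$ in the definition of $Q_\Gamma$, even before Proposition \ref{naturalregion}) justifies all divisions and the identification $\sqrt{EG}=|{\rm Jac}_M|=1-rK$. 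The paper instead avoids quoting the Laplace--Beltrami formula as a black box: it writes $\nabla\psi=\psi_r N+\frac{\psi_s}{1-rK}T$ via \eqref{gradient flow coordinates} and computes ${\rm div}(\nabla\psi)$ term by term, which requires the auxiliary identities ${\rm div}\,\hat N=-\frac{K}{1-rK}$ and ${\rm div}\,\hat T=0$ together with the careful identifications \eqref{identification1}--\eqref{identification3} translating derivatives of the hatted quantities back into $(r,s)$-derivatives. Your approach buys brevity and makes the geometric content (the diagonal metric) explicit at the cost of citing a standard formula; the paper's is longer but entirely self-contained and produces along the way the frame divergences that are implicitly reused in later computations. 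Part (ii) is handled identically in both arguments.
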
 

\begin{proof}
(i) For $\varphi=\varphi(r,s)$ recall the notation $\hat \varphi=\hat \varphi(x,y):=\varphi\circ M^{-1}$.
For any $\psi=\psi(x,y)\in C^2(D_\Gamma)$, using \eqref{gradient flow coordinates}, we obtain 
$$\nabla \psi  = \widehat{\psi_r} \,\hat N + \dfrac{\widehat{\psi_s}}{1-\hat r \hat K} \,\hat T
\quad\hbox{ in $D_\Gamma$.}$$
It follows that
\begin{equation}\label{Delta psi computation}
\begin{array}{rcl}
\Delta \psi = \text{div} (\nabla \psi) &=& \nabla (\widehat{\psi_r})\cdot \hat N+ \widehat{\psi_r} \ \text{div}\,\hat N + \dfrac{1}{1-\hat r\hat K} \nabla (\widehat{\psi_s}) \cdot \hat T \\
 \noalign{\vskip 1mm}
& & + \widehat{\psi_s} \nabla \left( \dfrac{1}{1-\hat r \hat K}\right) \cdot \hat T + \dfrac{1}{1-\hat r\hat K} \widehat{\psi_s} \ \text{div}\,\hat T.
\end{array}
\end{equation}
By  \eqref{gradient flow coordinates}, we have
$$[(\nabla \varphi)\circ M]\cdot N=\varphi_r
 \quad \text{and} \quad
 (1-rK) [(\nabla\varphi)\circ M]\cdot T=\varphi_s,$$
hence
$$\nabla \varphi\cdot \hat N=\widehat{\varphi_r}\equiv (\varphi\circ M)_r\circ M^{-1} 
 \quad \text{and} \quad
  (1-\hat r\hat K) (\nabla \varphi\cdot \hat T)=\widehat{\varphi_s}\equiv (\varphi\circ M)_s\circ M^{-1}.  $$
Using this with $\varphi=\widehat{\psi_r}$, we can thus identify
\begin{equation}\label{identification1}
 \nabla (\widehat{\psi_r})\cdot \hat N\equiv  \nabla ((\psi\circ M)_r\circ M^{-1})\cdot \hat N
=(\psi\circ M)_{rr} \circ M^{-1} \equiv \psi_{rr} \circ M^{-1},
\end{equation}
\begin{equation}\label{identification2}
(1-\hat r\hat K) \nabla (\widehat{\psi_s})\cdot \hat T\equiv (1-\hat r\hat K) \nabla ((\psi\circ M)_s\circ M^{-1})\cdot \hat T
=(\psi\circ M)_{ss} \circ M^{-1} \equiv \psi_{ss} \circ M^{-1}
\end{equation}
and
\begin{equation}\label{identification3}
\nabla \left( \dfrac{1}{1-\hat r\hat K}\right) \cdot \hat T=\dfrac{1}{1-\hat r\hat K}\biggl[\dfrac{1}{1-rK}\biggr]_s\circ M^{-1}
=\dfrac{rK'}{(1-rK)^3}\circ M^{-1}.
\end{equation}

On the other hand, since  
$N(s) = -\beta'(s)(1,0)+\alpha'(s)(0,1)$, we have
$$\text{div} (\hat N) = -\nabla\widehat{\beta'} \cdot (1,0) + \nabla \widehat{\alpha'} \cdot (0,1).$$
Applying \eqref{gradient flow coordinates} with $\psi=\widehat{\beta'}$ and  $\psi=\widehat{\alpha'}$, we obtain
\begin{eqnarray}
[\text{div} (\hat N)]\circ M
&=& -\dfrac{\beta''}{1-rK}T(s)\cdot (1,0) + \dfrac{\alpha''}{1-rK}T(s)\cdot (0,1)\notag \\ 
&=& -\dfrac{\beta''\alpha'}{1-rK}  + \dfrac{\alpha''\beta'}{1-rK} = -\dfrac{K}{1-rK}. \label{divN}
\end{eqnarray}
Similarly, since $T(s) = \alpha'(s) (1,0) + \beta'(s) (0,1)$, hence
$$\text{div} (\hat T) = \nabla\widehat{\alpha'} \cdot (1,0) + \nabla \widehat{\beta'} \cdot (0,1),$$
we have
\begin{eqnarray}
[\text{div} (\hat T)]\circ M
&=& \dfrac{\alpha''}{1-rK} T(s)\cdot (1,0) + \dfrac{\beta''}{1-rK}T(s)\cdot (0,1)\notag \\ 
&=& \dfrac{\alpha''\alpha'}{1-rK} + \dfrac{\beta''\beta'}{1-rK} = \dfrac{(\alpha'^2+\beta'^2)'}{2(1-rK)} = 0. \label{divT}
\end{eqnarray}
Finally, plugging \eqref{identification1}--\eqref{divT} 
in \eqref{Delta psi computation}, we obtain \eqref{diff op in flow coord}.

(ii) It suffices to apply assertion (i) to $\psi:=\varphi\circ M^{-1}$, using \eqref {definition psirstilde} and the fact that
$\tilde\psi\equiv\psi\circ M=\varphi$.
\end{proof}

We end this section with the proof of Proposition \ref{naturalregion}.

\begin{proof}[Proof of Proposition \ref{naturalregion}]
It suffices to show assertion (i).
We first establish the injectivity of $M$ on $Q_\Gamma$.
Let $C(s)=\gamma(s)+R(s)N(s)$ be the center of curvature.
We note that $D_\Gamma$ can be written as the union of half-open segments:
$$D_\Gamma=\displaystyle\bigcup_{s\in [0,s_0]} \Sigma(s),\quad\hbox{ where $\Sigma(s)=[\gamma(s),C(s))$.}$$
To show the injectivity, it suffices to verify that for any $0\le s_1<s_2\le s_0$,
the segments $\Sigma(s_1)$ and $\Sigma(s_2)$ do not intersect.
This amounts to showing that $\Sigma(s_2)$ lies entirely in the open half-plane to the right of the line 
$\Lambda_{s_1}$, defined as in Notation \ref{notation2}, which is the line containing the segment $\Sigma(s_1)$.
This half-plane is defined by the inequality
$$T(s_1)\cdot (x-\gamma(s_1))>0, \quad \text{with} \ x\in \mathbb{R}^2.$$
Considering the extremes of the segment $\Sigma(s_2)$, this is thus equivalent to
\begin{equation}\label{condprop1}
T(s_1)\cdot (\gamma(s_2)-\gamma(s_1))>0 \quad\hbox{ and }\quad T(s_1)\cdot (C(s_2)-\gamma(s_1))\ge 0.
\end{equation}

To show \eqref{condprop1}, using $\gamma'(s)=T(s)$ and \eqref{technical condition}, we first compute
$$\frac{d}{ds}\Big( T(s_1)\cdot (\gamma(s)-\gamma(s_1))\Big) = T(s_1)\cdot T(s) > 0,\ \quad s_1<s\le s_0,$$
hence the first inequality in \eqref{condprop1} follows.
On the other hand, using $N'(s)=-K(s)T(s)$, \eqref{increasing curvature} and \eqref{technical condition}, we get
$$\frac{d}{ds}\Big(T(s_1)\cdot N(s)\Big)=-K(s) T(s_1)\cdot T(s)\le 0,$$
Since $T(s_1)\cdot N(s_1)=0$, we deduce that
\begin{equation}\label{condprop2}
T(s_1)\cdot N(s) \le 0,\ \quad s_1<s\le s_0.
\end{equation}
Also, using $\gamma'(s)=T(s)$ and $N'(s)=-K(s)T(s)$, we have
$$C'(s)=(1-K(s)R(s))T(s)+R'(s)N(s)=R'(s)N(s).$$
Since $R'(s)\le 0$ due to \eqref{increasing curvature}, it follows from \eqref{condprop2} that
$$\frac{d}{ds}\Big( T(s_1)\cdot (C(s)-C(s_1))\Big) =R'(s)T(s_1)\cdot N(s)\ge 0,\ \quad s_1<s\le s_0,$$
hence, it follows from $\gamma (s_1) =  C(s_1) - R(s_1)N(s_1)$ that
$$T(s_1)\cdot (C(s_2)-\gamma(s_1)) = T(s_1)\cdot (C(s_2)-C(s_1))\ge 0,$$
which guarantees the second inequality in \eqref{condprop1}.
This completes the proof of the injectivity.

To prove that $M$ is a diffeomorphism from $Q_\Gamma$ to $D_\Gamma=M(Q_\Gamma)$,
it thus suffices to show that the Jacobian of $M$ does not vanish in $Q_\Gamma$.
For all $(r,s)\in Q_\Gamma$, using $\gamma'=T$ and $N'=-KT$ again, we compute
$${\rm Jac}_M(r,s)=det\Bigl(\frac{\partial M}{\partial r},\frac{\partial M}{\partial s}\Bigr)
=det\Bigl(N,(1-Kr)T\Bigr)=K(s)r-1<0,$$
since $r<R(s)=1/K(s)$, and the conclusion follows.
\end{proof}

\section{Preliminary results II: Nondegeneracy and localization of GBU and corner lemma}\label{previous results}

In this section we give three preliminary results that we use in the proofs of Theorems \ref{local result}
and \ref{second class theorem}.
We start with the following nondegeneracy lemma, proved in \cite{Li-Souplet}, which implies that, at any 
 gradient blow-up point, the estimate \eqref{bernstein estimate 1} is essentially optimal in the normal direction to the boundary. 

\begin{lemma}\label{nondegeneracy lemma}
Let $\Omega\in \mathbb{R}^2$ be a smoothly bounded domain and $x_0\in \partial\Omega$.
There exists $c_0 = c_0(p)$ such that, if
$$
u\leq c_0 \delta^{(p-2)/(p-1)} (x,y) \quad  \text{in} \ (B_\rho (x,0) \cap \Omega) \times [0,T),
$$
for some $\rho >0$, then $x_0$ is not a gradient blow-up point.
\end{lemma}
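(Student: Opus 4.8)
The plan is to prove Lemma~\ref{nondegeneracy lemma} by a contradiction/rescaling argument. Suppose $x_0\in\partial\Omega$ is a gradient blow-up point but the stated upper bound $u\le c_0\delta^{(p-2)/(p-1)}$ holds in $(B_\rho(x_0)\cap\Omega)\times[0,T)$ for a small constant $c_0=c_0(p)$ to be chosen. Since $x_0$ is a GBU point, there are sequences $t_k\to T$ and $x_k\to x_0$, $x_k\in\Omega$, with $|\nabla u(x_k,t_k)|\to\infty$. Up to flattening the boundary near $x_0$ (a smooth local chart, harmless since everything is local), we may assume $\partial\Omega$ is locally the hyperplane $\{y=0\}$ and work with $\delta(x,y)\approx y$ near $x_0$; write $d_k=\delta(x_k,t_k)$-distance and $\lambda_k=|\nabla u(x_k,t_k)|$.

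The key step is the rescaling. I would set, for appropriate exponents,
\[
v_k(\xi,\tau)=\lambda_k^{\,(2-p)/(p-1)}\,u\big(x_k+\lambda_k^{-1}\xi,\ t_k+\lambda_k^{-2}\tau\big),
\]
so that $v_k$ solves the same equation $\partial_\tau v_k-\Delta v_k=|\nabla v_k|^p$ (the scaling exponent $(2-p)/(p-1)$ is exactly the one that leaves the equation invariant while producing $|\nabla v_k|=1$ at the base point), with $|\nabla v_k(0,0)|=1$. From the global gradient estimate \eqref{bernstein estimate 1}, $|\nabla u|\le C_1\delta^{-1/(p-1)}+C_2$, one gets uniform gradient bounds for $v_k$ on compact sets of the rescaled half-space as long as the rescaled distance-to-boundary $\lambda_k d_k$ stays bounded below, and parabolic regularity then gives a limit $v$, a nonnegative bounded-gradient solution on a half-space or on all of $\mathbb{R}^2$, with $|\nabla v(0,0)|=1$, hence $v\not\equiv$ const. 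On the other hand, the hypothesis $u\le c_0\delta^{(p-2)/(p-1)}$ rescales to $v_k\le c_0\,(\lambda_k^{-1}\tilde\delta_k)^{(p-2)/(p-1)}\lambda_k^{(2-p)/(p-1)}=c_0\,\tilde\delta_k^{(p-2)/(p-1)}$ in rescaled variables, where $\tilde\delta_k$ is the rescaled distance function — the powers of $\lambda_k$ cancel exactly — so in the limit $0\le v\le c_0\,\tilde\delta^{(p-2)/(p-1)}$ on the limit domain. One must keep track of whether $\lambda_k d_k\to\infty$ (limit domain $=\mathbb{R}^2$, forcing $v\le 0$, i.e. $v\equiv0$, contradicting $|\nabla v(0,0)|=1$) or $\lambda_k d_k$ stays bounded (limit domain a half-space $\{\tilde y>-a\}$ for some $a\ge0$, with $v=0$ on the boundary line).

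The remaining work, and the main obstacle, is the half-space case: one has a global-in-time entire solution $v\ge0$ on $\mathbb{R}\times(0,\infty)$ (after translating so the boundary is $\{\tilde y=0\}$), or rather on $\mathbb{R}\times(-a,\infty)$ extended backward in $\tau$ to $(-\infty,0]$ by the rescaling, vanishing on $\tilde y=0$ (or $\tilde y=-a$), with the sublinear-in-normal-direction bound $v\le c_0\tilde y^{(p-2)/(p-1)}$ near the boundary and a uniform gradient bound, yet $|\nabla v(0,0)|=1$. The idea is to contradict this by comparison: the ODE profile $U(y)=c_p\,y^{(p-2)/(p-1)}$ (with $c_p$ the precise constant making $-U''=(U')^p$, equivalently $(U')^p=-c_p\frac{(p-2)}{(p-1)^2}\frac{-1}{p-1}y^{\cdots}$ — the computation pins down $c_p$) is the steady state capturing the boundary behavior, and if $c_0<c_p$ then $v$ lies strictly below a barrier whose normal derivative at the boundary is too small, forcing $|\nabla v|$ to be small near the boundary point $(0,0)$ in the limit — this is where one needs an interior–boundary gradient estimate (a Bernstein-type or scaling argument applied once more, or a direct comparison with the one-dimensional profile using the $\tilde y$-monotonicity that the bound provides). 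Choosing $c_0=c_0(p)$ smaller than this critical constant closes the contradiction. Since this lemma is quoted from \cite{Li-Souplet}, I would at this point simply cite that reference for the detailed barrier construction rather than reproduce it.
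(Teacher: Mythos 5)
First, a point of reference: the paper does not prove this lemma at all --- it is quoted verbatim from \cite{Li-Souplet} (``the following nondegeneracy lemma, proved in \cite{Li-Souplet}''), so there is no internal proof to compare against. Your closing sentence (``I would simply cite that reference'') is therefore consistent with what the paper actually does. However, the rescaling/Liouville scheme you wrap around that citation is not the argument of \cite{Li-Souplet}, which is a direct comparison with an explicit supersolution built from the regularized one-dimensional steady states $U_\varepsilon(y)=d_p\bigl[(y+\varepsilon)^{k}-\varepsilon^{k}\bigr]$, $k=(p-2)/(p-1)$, $d_p=\frac{p-1}{p-2}(p-1)^{-1/(p-1)}$, combined with a local gradient estimate --- no blow-up limit and no Liouville theorem. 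That route matters, because a Liouville-type classification of eternal solutions of $v_\tau-\Delta v=|\nabla v|^p$ in a half-plane is a genuinely hard result (addressed only in much later work of Porretta--Souplet), so your scheme replaces a short barrier argument by a deferred theorem that is at least as deep as the lemma itself.

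Concretely, three steps in your proposal do not go through as written. (1) The scaling is wrong: with spatial/temporal scales $\lambda_k^{-1},\lambda_k^{-2}$ and amplitude $\lambda_k^{(2-p)/(p-1)}$ the equation is \emph{not} invariant (one picks up a factor $\lambda_k^{2(p-2)}$ in front of $|\nabla v_k|^p$) and $|\nabla v_k(0,0)|=\lambda_k^{-(p-2)/(p-1)}\to 0$ rather than $1$. The scaling that preserves the equation \emph{and} normalizes the gradient is
\begin{equation*}
v_k(\xi,\tau)=\lambda_k^{\,p-2}\,u\bigl(x_k+\lambda_k^{-(p-1)}\xi,\ t_k+\lambda_k^{-2(p-1)}\tau\bigr),
\end{equation*}
under which the hypothesis indeed rescales to $v_k\le c_0\tilde\delta^{\,k}$. (2) Compactness up to the boundary is not justified: the hypothesis only yields a $C^{0,k}$-type bound with $k<1$ near the rescaled boundary, the Bernstein estimate \eqref{bernstein estimate 1} degenerates there, and nothing prevents $\lambda_k^{\,p-1}d_k\to 0$, in which case the base point collapses onto the boundary and the normalization $|\nabla v(0,0)|=1$ need not survive the limit. (3) The half-space rigidity step --- that an eternal nonnegative solution vanishing on the boundary and lying below $c_0\tilde y^{\,k}$ must have small gradient at a point at bounded distance from the boundary --- is exactly the content of the lemma in disguise; your sketch (``this is where one needs an interior--boundary gradient estimate'') names the difficulty without resolving it. The standard proof sidesteps all three issues by comparing $u$ directly with $U_\varepsilon(\delta(x))+C|x-x_0|^2$ on a fixed parabolic neighborhood, using the smallness $c_0<d_p$ only to verify the ordering on the parabolic boundary, and then invoking a local Bernstein estimate to convert the resulting linear bound $u\le C\delta$ into a gradient bound near $x_0$.
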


We observe that, as a consequence of this Lemma,  
if $x_0\in \partial\Omega$ is a gradient blow-up point, then we must have
$$
\limsup_{x\to x_0, t\to T} u(x,y,t) \delta^{-(p-2)/(p-1)} (x,y) \geq c_0(p),
$$ 
In view of \eqref{bernstein estimate 1}, it follows in particular  that
$$
\limsup_{x\to x_0, t\to T} u_\nu (x,y,t) \delta^{1/(p-1)}(x,y) \in (0,\infty).
$$ 
where $u_\nu$ is the derivative of $u$ in the outward normal direction to the boundary.

The second preliminary result is the following proposition, which
provides a sufficient condition on the initial data $u_0$ under which the solution blows up, with GBU set concentrated near an arbitrary given point. 
The idea of proof is based on that of \cite[Theorem~1.1]{Li-Souplet}, where a more particular example of initial data was given.

\begin{proposition}\label{GBU location lemma}
Let $p>2$, $\Omega\subset\R^2  $ be a smoothly bounded domain and let $x_0\in\partial\Omega$ and $\rho>0$.
There exist constants $C_1(p)>0$ and $C_2(p,\Omega,\rho)>0$ with the following property:

If for some $\eps>0$ such that $\tilde{B}_\eps := B(x_0 + \eps\nu(x_0), \eps)\subset \Omega$,
$u_0\in X_+$ satisfies
\begin{eqnarray}
\label{supp u_0}
&&{\rm supp}(u_0)\subset \overline\Omega\cap \overline B(x_0,\rho/2), \\
\label{L infinity u_0}
&&\|u_0\|_\infty\le C_2, \\
\label{inf u_0}
&&\inf_{\tilde B_{\eps/2}} u_0\ge C_1\eps^k,\quad\hbox{ with $\tilde B_{\eps/2}:= B(x_0+\eps\nu(x_0),\eps/2)$,}
\end{eqnarray} 
where $k=(p-2)/(p-1)$, then  
$$\hbox{$T(u_0)<\infty$\quad and \quad $GBUS(u_0) \subset B_\rho(x_0) \cap\partial\Omega$.}$$
\end{proposition}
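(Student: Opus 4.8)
The plan is to combine an upper barrier (to localize GBU inside $B_\rho(x_0)\cap\partial\Omega$) with a lower estimate on the solution that forces finite-time gradient blow-up, exploiting the nondegeneracy criterion of Lemma \ref{nondegeneracy lemma} in reverse.

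\textbf{Step 1: Localization of GBU via comparison.} First I would show that if $\|u_0\|_\infty$ is small enough (depending on $p,\Omega,\rho$) and $\mathrm{supp}(u_0)\subset\overline B(x_0,\rho/2)$, then no GBU point can lie in $\partial\Omega\setminus B_\rho(x_0)$. To do this, recall that $\|u(t)\|_\infty\le\|u_0\|_\infty$ by the maximum principle, and use a finite-speed-of-propagation / spatial localization argument: since the nonlinearity $|\nabla u|^p$ is nonnegative, $u$ is a subsolution of a linear heat-type problem away from the support region only after one controls the gradient term, so instead I would build an explicit stationary (or self-similar) supersolution $\overline u$ on $\Omega\setminus B(x_0,3\rho/4)$, say of the form $\overline u(x)=A\,\omega(x)$ where $\omega$ solves a suitable linear elliptic problem with boundary data matching $\|u_0\|_\infty$ on $\partial B(x_0,3\rho/4)\cap\Omega$ and zero on $\partial\Omega$; choosing $C_2=C_2(p,\Omega,\rho)$ small guarantees $|\nabla\overline u|^p\le \overline u/(\text{something})$ so that $\overline u$ is a genuine supersolution, whence $u\le\overline u$ there, and $\overline u\in C^1$ up to $\partial\Omega$ away from $B_\rho(x_0)$, ruling out GBU there. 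This yields $GBUS(u_0)\subset B_\rho(x_0)\cap\partial\Omega$ as soon as $T(u_0)<\infty$.

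\textbf{Step 2: Lower bound via a moving subsolution on the inner ball.} Next I would force $T(u_0)<\infty$. The idea (following \cite[Theorem~1.1]{Li-Souplet}) is to construct a subsolution supported in a shrinking ball tangent internally to $\partial\Omega$ near $x_0$. Using the hypothesis $\inf_{\tilde B_{\eps/2}}u_0\ge C_1\eps^k$, compare $u$ from below on $\tilde B_\eps=B(x_0+\eps\nu(x_0),\eps)$ with a function of the form $\underline u(x,t)=\eps^k\,V\big(\tfrac{|x-x_0-\eps\nu(x_0)|}{\eps},\tfrac{t}{\eps^\theta}\big)$ for an appropriate scaling exponent $\theta$ dictated by the balance $u_t-\Delta u=|\nabla u|^p$ (so that $\eps^{k-\theta}V_t\sim\eps^{k-2}\Delta V\sim\eps^{p(k-1)}|\nabla V|^p$, i.e. $\theta=2$ since $p(k-1)=-k$ and $k-2=p(k-1)$ both hold for $k=(p-2)/(p-1)$). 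For $V$ one takes a self-increasing profile (e.g. from the known blow-up ODE/PDE analysis for the diffusive Hamilton--Jacobi equation on a ball with zero boundary data) which blows up in gradient in finite rescaled time; choosing $C_1=C_1(p)$ large enough that $\underline u(\cdot,0)\le u_0$ on $\tilde B_{\eps/2}$ and $\underline u\le 0\le u$ on the parabolic boundary of $\tilde B_\eps$, the comparison principle gives $u\ge\underline u$, and since $\underline u$ develops gradient blow-up at the contact point $x_0\in\partial\tilde B_\eps\cap\partial\Omega$ at time $O(\eps^2)$, we conclude $T(u_0)<\infty$.

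\textbf{Step 3: Conclusion.} Combining Steps 1 and 2, $T(u_0)<\infty$ and $GBUS(u_0)\subset B_\rho(x_0)\cap\partial\Omega$, which is the claim. The main obstacle I anticipate is Step 2: one must produce an explicit or semi-explicit subsolution on the inner ball whose gradient genuinely blows up in finite time while respecting the zero Dirichlet condition on $\partial\tilde B_\eps$, and track the correct powers of $\eps$ so that the conditions $u_0\ge C_1\eps^k$ on $\tilde B_{\eps/2}$ and $\tilde B_\eps\subset\Omega$ suffice; getting the constant $C_1$ to depend only on $p$ (and not on $\Omega$ or $\eps$) is the delicate point and relies essentially on the scaling invariance of the equation. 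Step 1 is more routine, being a standard elliptic-supersolution comparison, though one must check the supersolution is $C^1$ up to $\partial\Omega$ outside $B_\rho(x_0)$, which follows from classical elliptic regularity since there the relevant problem is uniformly elliptic with smooth data.
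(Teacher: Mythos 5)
Your proposal is correct and follows essentially the same strategy as the paper: finite-time blow-up is forced by comparing from below with an $\eps^k$-rescaled radial solution on the interior tangent ball $\tilde B_\eps$ (time scale $\eps^2$, blow-up of the normal derivative at the tangency point $x_0$), and localization is obtained from a small multiple of a linear elliptic solution serving as a global supersolution, combined with the nondegeneracy Lemma \ref{nondegeneracy lemma}. The only differences are cosmetic (the paper runs the comparison on all of $\Omega$ with boundary data given by a cut-off and a compactness argument to get a lower bound uniform in $x_0$, rather than on the exterior region $\Omega\setminus B(x_0,3\rho/4)$).
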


\begin{proof}
We divide the proof into two steps.

\textit{Step 1: $\nabla u$ blows up in finite time.}
The idea here is to use the auxiliary function introduced in \cite{Li-Souplet} as subsolution. 
Let $\varphi\in C^\infty ([0,\infty))$ be a function satisfying
$$
\varphi'\leq 0, \qquad \varphi(r) = 1, \ \text{for} \ r\leq 1/4, \qquad \varphi(r) = 0, \ \text{for} \ r\geq 1/2.
$$
Consider the following problem:
\begin{equation}\label{GBU problem in B_1}
\begin{array}{cl}
v_t - \Delta v = |\nabla v|^p, & x\in B_1(0), \ t>0, \\
v(x,t) = 0, & x\in \partial B_1(0), \ t>0, \\
v(x,0) = \phi (x) := C_1 \varphi(|x|), & x \in B_1(0).
\end{array}
\end{equation}
By \cite[Thm 4.2]{QS} (see also \cite[Prop. 7.1]{Souplet-Zhang}), there exists $C_0 = C_0(p)$ such that,
if $\|\phi\|_1 \geq C_0$, then $T(\phi) <\infty$.
Therefore, we have $T(\phi)<\infty$ whenever $C_1$ is bigger than some constant depending on $p$.
We now use the scale invariance of  the equation.
Namely we consider the rescaled function
$$
v_\eps (x,t) = \eps^k v\left( \eps^{-1} |x-\tilde{x}_0|, \eps^{-2}t\right),
$$
where $\tilde{x}_0 = x_0 + \eps \nu(x_0)$.
Then $v_\eps$ solves \eqref{GBU problem in B_1} in $\tilde{B}_\eps \subset\Omega$.

Since we have
$$
v_\eps (x,0) = \eps^k C_1 \varphi (\eps^{-1}|x-\tilde{x}_0|) \leq \eps^k C_1 \ \text{in} \  \tilde{B}_{\eps/2},
$$
and $v_\eps (x,0) = 0$ in $\tilde{B}_\eps \setminus \tilde{B}_{\eps/2}$,
we can use \eqref{inf u_0}, together with the comparison principle to get
$$
u\geq v_\eps \quad
\text{in} \ \tilde{B}_\eps \times (0,\tilde{T}), \quad \text{where} \ \tilde{T} = \min (T(u_0),T_\eps) \ \text{and} \ T_\eps = \eps^2 T(\phi).
$$
Now we observe that $\tilde{B}_\eps$ is tangent to $\partial\Omega$ at $x_0$, so we deduce
$$
- \dfrac{\partial u}{\partial \nu} (x_0,t) \geq - \dfrac{\partial v_\eps}{\partial \nu} (x_0,t), \qquad 0<t<\tilde{T}.
$$
On the other hand, as a consequence of the maximum principle applied to $\nabla v$
(see e.g. \cite[Prop. 40.3]{QS}), we know that
$$
\displaystyle\max_{t\in[0,\tau]} \| \nabla v(\cdot,t) \|_\infty
= \max \left( \|\nabla v(\cdot,0)\|_\infty, \ \displaystyle\max_{\partial B_1(0) \times [0,\tau]} \left(-\dfrac{\partial v}{\partial\nu}\right)\right), \qquad
0<\tau <T(\phi).
$$
Since $v$ is radially symmetric, it follow that
$$
\displaystyle\limsup_{t\to T_\eps} \dfrac{\partial v_\eps}{\partial \nu} (x_0,t) = \infty,
$$
hence $T(u_0) \leq T_\eps < \infty$.

\textit{Step 2: No GBU on $\partial\Omega\setminus B_\rho(x_0)$.}
For $\rho>0$, consider a cut-off function $h\in C^\infty ([0,\infty))$ satisfying
$$
h'\leq 0, \qquad
h(r) = 1,\quad \text{for} \ r \leq \rho/2,
\qquad
h(r) = 0, \quad \text{for} \ r\geq 3\rho/4.
$$
Now, let $h_{x_0}$ be the function in $\overline{\Omega}$ defined by
$$ h_{x_0} (x) := h(|x-x_0|). $$

Let $\psi= \psi_{x_0}$ be the unique classical solution of the linear elliptic problem
\begin{equation}\label{initial data lemma elliptic problem}
\left\lbrace \begin{array}{cc}
-\Delta \psi (x)= 1, & x\in \Omega, \\
\noalign{\vskip 1mm}
\psi (x) = h_{x_0}(x), & x\in \partial\Omega.
\end{array}
\right. 
\end{equation}

We claim that there exists $c_1>0$, independent of $x_0$, satisfying
$$
\psi(x) \geq c_1, \qquad \text{for all} \ x\in \Omega \cap B(x_0,\rho/2).
$$
We can prove this claim by using a contradiction and compactness argument.
Suppose there exists a sequence $\{ x_i\}_{i\in\mathbb{N}} \subset \partial \Omega$ such that
\begin{equation}\label{min psi to 0}
\min_{\overline{\Omega \cap B(x_i,\rho/2)}} \psi_{x_i} (x) \to 0, \quad \text{as} \ i\to \infty,
\end{equation}
where $\psi_{x_i}$ is the solution of \eqref{initial data lemma elliptic problem} with boundary data $h_{x_i}$.
Since $\partial\Omega$ is compact, we can suppose, by extracting a subsequence, that $x_i$ converges to some $x_{\infty}\in \partial\Omega$.

Now fix some $\alpha\in (0,1)$ and observe that, by the construction of $h_{x_0}$ above, there exists $C>0$, independent of~$i$, such that
$\|h_{x_i}\|_{C^{2+\alpha}(\overline\Omega)} \leq C$, and therefore $\|\psi_{x_i}\|_{C^{2+\alpha}(\overline\Omega)} \leq C'(C,\Omega)$
by interior-boundary elliptic Schauder estimates (see Theorem 47.2 (ii) in \cite{QS}).
Hence, as $h_{x_i}$ converges to $h_{x_\infty}$ in $C^{2+\alpha}(\overline\Omega)$, 
by compact embeddings and uniqueness for problem \eqref{initial data lemma elliptic problem}, we can deduce that 
$\psi_{x_i}$ converges to $\psi_{x_\infty}$ in $C^{2}(\overline\Omega)$.
It then follows from \eqref{min psi to 0} that $\psi_{x_\infty}$ vanishes somewhere in $\overline{\Omega\cap B(x_\infty,\rho/2)}$.

Since $h_{x_\infty} (x) =1$ in $\overline B(x_\infty,\rho/2)$, and then $\psi_{x_\infty} (x) = 1$ in $\partial\Omega\cap  \overline B(x_\infty,\rho/2)$, we deduce that $\psi_{x_\infty}$ vanishes somewhere in the interior of $\Omega$, contradicting the strong maximum principle. The claim is then proved.

On the other hand, applying elliptic estimates again,
there exists $\tilde C = \tilde C(\rho, \Omega)>0$ such that $\|\nabla \psi\|_{\infty}\leq \tilde C$.
Choosing $c_2=\tilde C^{-p/(p-1)}$, we then have
$\|\nabla \psi\|_{\infty}^{-p/(p-1)}\ge c_2$, hence
$$
-\Delta (c_2 \psi) = c_2 \geq |\nabla (c_2\psi)|^p, \qquad \text{in} \ \Omega.
$$
And by \eqref{L infinity u_0} with $C_2=c_1c_2$, we have
$$
c_2\psi \geq c_2 c_1 \geq u_0, \qquad \text{in}\  B(x_0,\rho/2),
$$
hence, using \eqref{supp u_0}, we get $c_2\psi \geq u_0$ in $\Omega$.
By the comparison principle, it follows that $u\leq c_2\psi$ in $\Omega\times (0,T(u_0))$.
Therefore, since $\psi=0$ on $\partial \Omega \setminus B_{3\rho/4}(x_0)$, we have
$$
0\leq - \dfrac{\partial u}{\partial \nu}  \leq -c_2 \dfrac{\partial \psi}{\partial\nu}  \leq C, \quad \text{on} \ (\partial \Omega \setminus B_{3\rho/4}(x_0))\times (0,T(u_0)). 
$$
The conclusion then follows from Lemma \ref{nondegeneracy lemma}.
\end{proof}

We conclude this section with a parabolic version of  ``Serrin's corner Lemma'',
 adapted to our parabolic problem and domain.

\begin{lemma}\label{Serrin corner lemma 2}
Let $p>2$ and $u_0 \in X_+$, let $\Omega, \Gamma, \gamma, M$ be as in Notation \ref{notation1}.
and assume  \eqref{increasing curvature} and \eqref{technical condition}. 
Suppose that there exist $t_0\in(0,T)$, $s_1\in (0,s_0)$, $r_0>0$ and $c_1>0$ such that
$$\omega_0 :=M((0,r_0)\times (0,s_0)) \subset \Omega\cap D_\Gamma,$$
\begin{equation}\label{Serrin monotony hyp 2}
u_x <0 \quad \text{in} \ \omega_0\times (t_0,T)
\end{equation}
and
\begin{equation}\label{Serrin Hopf hyp 2}
u_x \leq -c_1 r \quad \text{on} \ (0,r_0)\times\{s_1\}\times(t_0,T).
\end{equation}
Then, for any fixed $r_1\in(0,r_0)$ and $t_1\in (t_0,T)$, 
there exists $\tilde{c}_1>0$ such that
$$
u_x (r,s,t_1) \leq -\tilde{c}_1 r s \quad \text{in} \ \omega_1,
$$
where $\omega_1 := M((0,r_1)\times (0,s_1)).$
\end{lemma}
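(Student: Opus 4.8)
The statement is a parabolic Serrin corner lemma: I want to upgrade the linear-in-$r$ bound $u_x \le -c_1 r$ available only on the single normal segment $s=s_1$ to a bound $u_x \le -\tilde c_1 rs$ on the whole corner region $\omega_1$ near $r=0$, $s=0$. The natural strategy is to work with $w := u_x$ in the boundary-fitted coordinates $(r,s)$, derive the linear parabolic equation it satisfies (differentiating the equation for $u$ in the $x$-direction, which commutes with $\Delta$, gives $w_t - \Delta w = p|\nabla u|^{p-2}\nabla u\cdot\nabla w$, a linear equation with bounded coefficients on $\overline{\omega_0}\times[t_1',T')$ for any $t_1'<t_1$, $T'<T$, since $\nabla u$ is bounded there by \eqref{bernstein estimate 1} away from the blow-up point... actually on $\omega_0$ up to $t_1$ it is bounded as $t_1<T$). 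Then I will build a barrier of the form $z(r,s) = \tilde c_1\, r\,\eta(s)$ (times possibly a decaying-in-time factor, but since we only need a fixed time $t_1$ a stationary barrier comparison on a finite time interval suffices if I'm careful, or I can use the standard trick of comparing on $[t_0',t_1]$ with a time-exponential factor) where $\eta$ is smooth, $\eta(0)=0$, $\eta'(0)>0$, $\eta \le $ (small) $\cdot s$, chosen so that $z$ is a supersolution of the parabolic inequality for $w$ in the interior and lies above $w$ on the parabolic boundary of $\omega_1$.

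**Key steps in order.** (1) Fix $t_1\in(t_0,T)$ and $r_1\in(0,r_0)$; pick $t_0'\in(t_0,t_1)$. Establish that on $\overline{\omega_0}\times[t_0',t_1]$ the coefficients $b := p|\nabla u|^{p-2}\nabla u$ are bounded and that $w=u_x<0$ there, with $w \le -c_0 < 0$ on the compact "inner" boundary pieces $\{r=r_1\}$ and $\{s=s_1\}$ (using \eqref{Serrin monotony hyp 2} and continuity/compactness for $\{r=r_1\}$, and \eqref{Serrin Hopf hyp 2} for $\{s=s_1\}$ where in fact $w\le -c_1 r$). (2) Translate the operator $w_t-\Delta w - b\cdot\nabla w$ into $(r,s)$ coordinates using Proposition \ref{Lap in flow coord}: $\Delta w = w_{rr} - \frac{K}{1-rK}w_r + \frac{1}{(1-rK)^2}w_{ss} + \frac{rK'}{(1-rK)^3}w_s$, and \eqref{diff op in flow coord2}/\eqref{gradient flow coordinates} for the drift term. (3) Take the comparison function $z(r,s,t) = A r\eta(s)e^{\lambda(t-t_0')}$ for suitable constants; compute $\mathcal{L}z := z_t - \Delta z - b\cdot\nabla z$ and observe that the dominant term is $z_t - \frac{1}{(1-rK)^2}A r\eta''(s)e^{\lambda(t-t_0')}$ plus first-order terms proportional to $Ar e^{\lambda(t-t_0')}$ and to $A\eta(s)e^{\lambda(t-t_0')}$ — since $\eta = O(s)$ near $0$ and $z$ itself is $O(r\eta)$, all first-order terms are controlled by choosing $\lambda$ large and $\eta$ with $\eta''\le 0$ (e.g. $\eta(s)=\sin(\kappa s)$ on a short interval, or simply a concave bump), so that $\mathcal{L}z \ge 0$, i.e. $-z$ is a subsolution matching the sign of $w<0$. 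Equivalently: $w + z$ satisfies $\mathcal{L}(w+z)\le 0$... I should instead compare $-w$ (positive) with $-z = $ a supersolution of the equation for $-w$ ... (4) Verify on the parabolic boundary of $\omega_1\times(t_0',t_1)$: on $\{s=0\}$, $z=0 \ge w$? No — $w<0$ there so $-w>0 = -z$... I need $-w \ge -z$ i.e. $w \le z$; on $\{s=0\}$, $z=0$ and $w= u_x<0$, fine. On $\{r=0\}$, $z=0$ and $w=u_x \le 0$ (limit from $\omega_0$), fine. On $\{r=r_1\}$ and $\{s=s_1\}$: need $A r\eta(s)e^{\cdots} \ge -w \ge c_0>0$ — but near the corners of these segments ($s\to 0$ on $\{r=r_1\}$, $r\to 0$ on $\{s=s_1\}$) the left side degenerates; this is where \eqref{Serrin Hopf hyp 2}, $w\le -c_1 r$ on $\{s=s_1\}$, is essential — it gives $-w\ge c_1 r$, matching the $r$-degeneration, and on $\{r=r_1\}$ I use \eqref{Serrin monotony hyp 2} plus a further Hopf-type argument at the corner $r=r_1, s=0$, OR — cleaner — I first note it suffices to prove the claim on a slightly smaller $\omega_1' = M((0,r_1')\times(0,s_1'))$ with the segments $\{r=r_1'\}$, $\{s=s_1'\}$ sitting strictly inside $\omega_0$ where $w\le -c(r_1')<0$ uniformly, so the degeneration issue only appears at $s=0$ (for $\{r=r_1'\}$) and there I absorb it into the constant $A$ since $\eta(s_1')>0$; for $\{s=s_1'\}$ I still need $w\le -c_1 r$ which I propagate from $s=s_1$ down to $s_1'$ by... hmm, that's not automatic. (5) Apply the maximum principle on $\overline{\omega_1}\times[t_0',t_1]$ to conclude $w \le z$ there, and evaluate at $t=t_1$: $u_x(r,s,t_1) \le Ar\eta(s)e^{\lambda(t_1-t_0')} \le \tilde c_1 r s$ since $\eta(s)\le Cs$. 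Actually I realize the bound I want is an \emph{upper} bound $u_x \le -\tilde c_1 rs$, i.e. $-w \ge \tilde c_1 rs$, so I need $z$ to be a \emph{barrier from above for $w$} in the sense $w\le z$ with $z\le -\tilde c_1 rs <0$; so $z$ should itself be negative, $z = -Ar\eta(s)e^{\lambda(t-t_0')}$ with $\eta\ge 0$, and I need $\mathcal{L}z\le 0$ (so $z$ is a subsolution, hence $w\ge z$ fails — sign confusion). Let me just say: set $z = -Ar\eta(s)e^{-\lambda(t_1-t)}$; the correct sign choices are routine and I will fix them so that $z$ is a supersolution lying above $w$ on the parabolic boundary.

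**Main obstacle.** The delicate point is the corner behavior on the two "inner" boundary segments $\{r=r_1\}$ and $\{s=s_1\}$ where the barrier $\propto r\eta(s)$ vanishes while $u_x$ must be shown strictly more negative. Hypothesis \eqref{Serrin Hopf hyp 2} is exactly engineered to handle the $r\to 0$ degeneration along $\{s=s_1\}$, but the $s\to 0$ degeneration along $\{r=r_1\}$ (the corner point $(r_1,0)$) requires an additional argument — either a one-dimensional Hopf lemma in the $s$-direction using $u_x<0$ strictly in the interior, or (the route I'd actually take) shrinking to $r_1'<r_1$, $s_1'<s_1$ and first establishing $u_x\le -c r$ on the segment $\{s=s_1'\}$ by an intermediate application of the same barrier technique on the rectangle $M((0,r_0)\times(s_1',s_1))$, bootstrapping from \eqref{Serrin Hopf hyp 2} at $s_1$ down to $s_1'$, so that \emph{both} inner segments carry a bound linear in $r$, after which the final comparison on $M((0,r_1')\times(0,s_1'))$ goes through cleanly. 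Carefully bookkeeping the coordinate-dependent coefficients from Proposition \ref{Lap in flow coord} (which are smooth and bounded on $\overline{\omega_0}$ thanks to \eqref{rKs}, \eqref{increasing curvature}, \eqref{technical condition}) so that the first-order error terms in $\mathcal{L}z$ are genuinely absorbed by the $e^{\lambda t}$ factor is the other technical, though routine, ingredient.
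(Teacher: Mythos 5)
Your overall frame is right (linearized equation for $w=u_x$, Proposition \ref{Lap in flow coord} to pass to $(r,s)$ coordinates, a product barrier degenerating like $rs$, comparison principle, with \eqref{Serrin Hopf hyp 2} handling the lateral piece $\{s=s_1\}$), and you correctly flag the corner degeneration as the crux. But the proposal has two genuine gaps that your static-shape barrier $z=\pm A\,r\,\eta(s)\,e^{\lambda(t-t_0')}$ cannot close. First, you never address the \emph{initial-time} slice of the parabolic boundary: at $t=t_0'$ you would need $-u_x(\cdot,t_0')\ge A\,r\,\eta(s)\,(\mathrm{const})\gtrsim rs$ throughout $\omega_1$, which is essentially the conclusion of the lemma at an earlier time --- the argument is circular. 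Second, the claim that ``all first-order terms are controlled by choosing $\lambda$ large'' is false for a profile exactly linear in $r$: the terms $\bigl(\tfrac{K}{1-rK}+p|\nabla u|^{p-2}u_r\bigr)z_r$ are of size $A\eta(s)$ with \emph{no} factor of $r$ and uncontrolled sign, so they cannot be absorbed by $z_t\propto r\eta(s)$ near $r=0$ (and $z_{rr}=0$ gives nothing); analogously, the drift term in $z_s$ is of size $Ar\eta'(s)\gg Ar\eta(s)$ near $s=0$, and your suggested concave $\eta$ (e.g.\ $\sin\kappa s$) has the second derivative with the \emph{wrong} sign to dominate it --- one would need convex profiles with $\rho''\gg|\rho'|$, $\eta''\gg|\eta'|$. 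Your proposed bootstrap for the remaining lateral segment $\{r=r_1\}$ near $s=0$ is also not carried out and, as you half-admit, runs into the same circularity.

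The paper resolves all of this at once by taking the barrier to be $z(r,s,t)=v(r,t)V(s,t)$, where $v$ and $V$ solve auxiliary one-dimensional parabolic problems on $(0,r_0)$ and $(0,s_1)$ with absorption terms $-M|v_r|$, $-MV_s$ ($M$ chosen to dominate the curvature and gradient coefficients, so that $z$ is automatically a subsolution of the linearized equation), with initial data $\phi(r)\psi(s)$ compactly supported away from $r\in\{0,r_0\}$ and from $s=0$. Then the initial comparison $-u_x\ge\mu\phi\psi$ at $t=t_2$ only uses strict negativity of $u_x$ on a compact subset (no rate needed), the barrier vanishes identically on the whole lateral boundary except $\{s=s_1\}$ where $V\equiv 1$ and \eqref{Serrin Hopf hyp 2} gives $-u_x\ge c_1r\ge\mu v$, and the desired rate $v(r,t_1)V(s,t_1)\ge c_0^2\,rs$ is produced only at the final time $t_1$ by Hopf's lemma applied to $v$ and $V$. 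If you want to salvage your route, that is the missing idea: let the $rs$ degeneration \emph{emerge} from Hopf's lemma for auxiliary parabolic solutions started from data supported away from the corner, rather than impose it as the shape of a static barrier.
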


\begin{proof}
We fix a nontrivial smooth function $\phi\geq 0$ on $[0,r_0]$, with $\text{supp} (\phi) \subset \subset (0,r_0)$ and another smooth function $\psi$ on $[0,s_1]$ such that
$$
\psi = 0 \quad \text{on} \ \left[0,\dfrac{s_1}{2}\right], \quad \psi (s_1) = 1, \quad
\psi',\psi''\geq 0.
$$ 
Fix a constant $M>0$ such that
\begin{equation}\label{M 2}
M\geq \dfrac{K}{1-rK} + p|\nabla u|^{p-1}, \quad M\geq \dfrac{rK'}{(1-rK)^3} + \dfrac{p|\nabla u|^{p-1}}{1-rK},
\quad\hbox{ in $\omega_0\times (t_0,t_1]$.}
\end{equation}
Next, fix $t_2 \in (t_0,t_1)$ and let $v,V$ be the respective global solutions of
\begin{equation*}
\begin{array}{ll}
v_t-v_{rr} = - M|v_r|, & r\in(0,r_0),\ t>t_2, \\
 \noalign{\vskip 1mm}
v(0,t) = v(r_0,t) = 0, & t>t_2, \\
 \noalign{\vskip 1mm}
v(r,t_2) = \phi(r), & r\in [0,r_0],
\end{array}
\end{equation*}
and
\begin{equation}\label{equation V 2}
\begin{array}{ll}
V_t - V_{ss} = - M V_s, & s\in (0,s_1),\ t>t_2, \\
 \noalign{\vskip 1mm}
V(0,t)=0, \  V(s_1,t) = 1, & t>t_2, \\
 \noalign{\vskip 1mm}
V(s,t_2) = \psi(s), & s\in[0,s_1].
\end{array}
\end{equation}
By the maximum principle we have $v\ge 0$, $0\leq V\leq 1$, and $V_s\geq 0$. 
Also, by \eqref{equation V 2}, we deduce that $V_{ss}(s,t)\geq 0$, for $s\in \{0,s_1\}$ and $t>t_2$.
Since $\psi''\geq 0$, it follows from the maximum principle that $V_{ss}\geq 0$, for $s\in(0,s_1)$, $t>t_2$.
Moreover, by Hopf's lemma, for some $c_0>0$, we have
\begin{equation}\label{v V Hopf 2}
v(r,t_1) \geq c_0 r \ \text{in} \ (0,r_1 ), \qquad V(s,t_1) \geq c_0 s \ \text{in} \ (0,s_1).
\end{equation}

Let then $z(r,s,t) = v(r,t) V(s,t)$. We compute
\begin{eqnarray*}
z_t - z_{rr} - \dfrac{1}{(1-rK)^2} z_{ss} &=& V(v_t - v_{rr}) + v\left(V_t - \dfrac{1}{(1-rK)^2}V_{ss}\right) \\
&\leq & -M|z_r| - M|z_s|.
\end{eqnarray*}
Hence, using \eqref{gradient flow coordinates},  Proposition~\ref{Lap in flow coord} and the choice of $M$ in \eqref{M 2}, we obtain
\begin{equation}\label{parab ineq serrin 1 2}
\begin{array}{rcl}
z_t - \Delta z &=& z_t-z_{rr} + \dfrac{K}{1-rK}z_r - \dfrac{1}{(1-rK)^2} z_{ss} - \dfrac{rK'}{(1-rK)^3}z_s \\
 \noalign{\vskip 1mm}
& \leq & -\left(M - \dfrac{K}{1-rK}\right) |z_r| - \left(M - \dfrac{rK'}{(1-rK)^3}\right) |z_s| \\
 \noalign{\vskip 1mm}
& \leq & p|\nabla u|^{p-2} \nabla u\cdot \nabla z.
\end{array}
\end{equation}
On the other hand,$W :=-u_x$  satisfies
\begin{equation}\label{parab ineq serrin 2 2}
W_t - \Delta W = p|\nabla u|^{p-2}\nabla u\cdot \nabla W.
\end{equation}

For $\mu\in (0,1)$ small enough, due to \eqref{Serrin monotony hyp 2}, together with $\text{supp} (\phi) \subset\subset (0,r_0)$ and $\psi \equiv 0$ in $[0,s_1/2]$, we have
$$
-u_x(r,s,t_2) \geq \mu \phi(r) \psi(s) = \mu z(r,s,t_2) \quad \text{in} \ \omega_1. 
$$
Moreover, for possibly smaller $\mu>0$, using \eqref{Serrin Hopf hyp 2}, we see that
$$
-u_x (r,s_1,t) \geq c_1 r \geq \mu v(r,t) = \mu z(r,s_1,t), \quad
r\in(0,r_0), \ t\in [t_2,t_1].
$$
Since $z=0$ on the rest of the lateral boundary of $ \omega_1\times[t_2,t_1]$ 
(i.e. for $r\in\{r_0,1\}$ or $s=0$), it follows from \eqref{parab ineq serrin 1 2}, \eqref{parab ineq serrin 2 2}, the comparison principle and \eqref{v V Hopf 2} that
$$
-u_x (r,s,t_1) \geq \mu v(r,t_1) V(s,t_1) \geq \tilde{c}_1 rs \quad \text{in} \  \omega_1,
$$
with $\tilde{c}_1=\mu c_0^2$.
\end{proof}

\section{Proof of Theorem \ref{local result}}\label{proof of local result}

\subsection{Auxiliary parabolic inequalities}

Theorem \ref{local result} will be proved by using the techniques introduced in \cite{Li-Souplet}, that 
we here have to modify in a nontrivial way in order to adapt the method to the boundary with non constant curvature. 
These techniques are based on a Friedman-McLeod-type argument \cite{FM}, which is very useful for solutions which are monotone in some sense. In our case, 
this monotonicity follows from the hypothesis \eqref{monotonicity condition}.

Recall Notation \ref{notation3} and \eqref{rKs}.
Let $\sigma\in \bigl(0,\frac{1}{2(p-1)}\bigr)$ be fixed. 
For given $\eta\in (0,s_0/2)$, we consider the auxiliary functions
\begin{equation}\label{J def}
J = \dfrac{u_s}{1-rK(s)} + c(s) d(r) F(u)
\end{equation}
and
\begin{equation}\label{tilde J def}
\bar J = u_x + \bar c(s) d(r) F(u),  
\end{equation}
defined in $(D_\Gamma \cap \Omega)\times (0,T)$, where $D_\Gamma$ is given in \eqref{defQGamma} and
\begin{equation}\label{cdF def}
\begin{array}{ll}
F(u) = u^q, &  1<q<2, \\ 
 \noalign{\vskip 1mm}d(r) = r^{-\gamma}, & \gamma = (1-2\sigma)(q-1), \\
 \noalign{\vskip 1mm}
 c(s) = k (s-\eta), & k \  \in (0,1), \\
 \noalign{\vskip 1mm}
\bar c(s) = ks, &
\end{array}
\end{equation}
where $k,\gamma$ will be taken small (i.e., $q$ close to $1$).

We start with a Lemma giving the equation satisfied by the first part of $J$.

\begin{lemma}\label{parabolic eq for u_s lemma}
Let $\Omega,\Gamma,\gamma,M$ be as in Notation \ref{notation1} and assume \eqref{increasing curvature},\eqref{technical condition}.
Then, the function $w = \dfrac{u_s}{1-rK}$ satisfies
\begin{equation}\label{parabolic ineq second class}
w_t - \Delta w = a_w w + b_w\cdot \nabla w + \dfrac{K'}{(1-rK)^3} \frac{1}{\beta'}u_x 
\qquad
\text{in} \ (D_\Gamma\cap \{s>0\}\cap \Omega)\times (0,T), 
\end{equation}
with
\begin{equation*}
\begin{array}{l}
a_w = \dfrac{K^2}{(1-rK)^2} - \dfrac{pK}{1-rK}|\nabla u|^{p-2} u_r - \dfrac{K'}{(1-rK)^3}\dfrac{\alpha'}{\beta'}, \\
 \noalign{\vskip 1mm}
b_w = p |\nabla u|^{p-2}\nabla u - \dfrac{2K}{1-rK}N(s).
\end{array}
\end{equation*}
\end{lemma}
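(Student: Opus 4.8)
The plan is to rewrite the equation satisfied by $u$ in boundary-fitted coordinates, differentiate it with respect to $s$, and re-express the outcome in terms of $w=u_s/(1-rK)$. Since $r$ and $K(s)$ do not depend on $t$, we have $w_t=(u_s)_t/(1-rK)=\partial_s(u_t)/(1-rK)$, so it suffices to compute $\partial_s$ of the equation for $u$ and divide by $1-rK$. By Proposition~\ref{Lap in flow coord} and \eqref{diff op in flow coord2}, on $D_\Gamma\cap\Omega$ the function $u$ solves
$$u_t=u_{rr}-\frac{K}{1-rK}u_r+\frac{u_{ss}}{(1-rK)^2}+\frac{rK'}{(1-rK)^3}u_s+\Bigl(u_r^2+\tfrac{u_s^2}{(1-rK)^2}\Bigr)^{p/2}.$$

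First I would differentiate this identity in $s$, using the elementary relations
$$\frac{d}{ds}\frac{K}{1-rK}=\frac{K'}{(1-rK)^2},\qquad \frac{d}{ds}\frac{1}{(1-rK)^2}=\frac{2rK'}{(1-rK)^3},\qquad \frac{d}{ds}\frac{rK'}{(1-rK)^3}=\frac{rK''}{(1-rK)^3}+\frac{3r^2(K')^2}{(1-rK)^4},$$
and $\partial_s(|\nabla u|^p)=\tfrac p2|\nabla u|^{p-2}\partial_s(|\nabla u|^2)$. In each term I would replace the derivatives of $u_s$ by derivatives of $w$ via $u_s=(1-rK)w$, which gives $u_{ss}=-rK'w+(1-rK)w_s$, $u_{sss}=-rK''w-2rK'w_s+(1-rK)w_{ss}$, $u_{rs}=-Kw+(1-rK)w_r$ and $u_{rrs}=-2Kw_r+(1-rK)w_{rr}$, and I would use $u_s^2/(1-rK)^2=w^2$, which makes $\partial_s$ of the nonlinear term collapse to $p|\nabla u|^{p-2}\bigl(-Ku_rw+(1-rK)u_rw_r+ww_s\bigr)$. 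A useful bookkeeping remark is that the two terms carrying a factor $K''$, and likewise the two carrying $(K')^2$, cancel in pairs; after dividing by $1-rK$ the terms that survive are those in $w_{rr}$ and $w_{ss}$, some $w_r$ and $w_s$ terms, the zeroth-order terms $\tfrac{K^2}{(1-rK)^2}w$ and $-\tfrac{pK}{1-rK}|\nabla u|^{p-2}u_rw$, and a single isolated leftover $-\tfrac{K'}{(1-rK)^3}u_r$.

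Subtracting $\Delta w$ as given by Proposition~\ref{Lap in flow coord}(ii), the $w_{rr}$ and $w_{ss}$ terms cancel exactly. For the remaining first-order terms I would use \eqref{gradient flow coordinates}, i.e.\ $\nabla w=w_rN(s)+\tfrac{w_s}{1-rK}T(s)$, together with $\nabla u\cdot N=u_r$ and $\nabla u\cdot T=u_s/(1-rK)=w$, to recognize that they equal $b_w\cdot\nabla w$ with $b_w=p|\nabla u|^{p-2}\nabla u-\tfrac{2K}{1-rK}N(s)$; the term $-\tfrac{2K}{1-rK}N(s)$ originates from the $-Kw$ in $u_{rs}$ and the $-2Kw_r$ in $u_{rrs}$. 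At this point $w_t-\Delta w=\tfrac{K^2}{(1-rK)^2}w-\tfrac{pK}{1-rK}|\nabla u|^{p-2}u_rw-\tfrac{K'}{(1-rK)^3}u_r+b_w\cdot\nabla w$. Finally, in the isolated leftover term only (expanding the $u_rw$ term would create an unwanted $w^2$, so it is kept as is), I would substitute \eqref{r derivative in temrs of x and s derivatives}, legitimate since $s\ne0$ forces $\beta'(s)\ne0$ by \eqref{technical condition}, namely $u_r=-\tfrac1{\beta'}u_x+\tfrac{\alpha'}{\beta'}w$. This turns $-\tfrac{K'}{(1-rK)^3}u_r$ into $-\tfrac{K'}{(1-rK)^3}\tfrac{\alpha'}{\beta'}w+\tfrac{K'}{(1-rK)^3}\tfrac1{\beta'}u_x$; absorbing the first piece into the coefficient of $w$ produces exactly $a_w$, and the second piece is the claimed residual term, which gives \eqref{parabolic ineq second class}.

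The argument is a long but entirely mechanical computation, and the only real difficulty is the bookkeeping: tracking the numerous $K$, $K'$, $K''$ and $(1-rK)^{-j}$ factors, checking that all $K''$, $(K')^2$ and second-order-in-$w$ terms cancel, and verifying that after every cancellation the coefficient of the isolated leftover $u_r$ is precisely $\tfrac{K'}{(1-rK)^3}$. I would keep this under control by grouping terms according to their order in derivatives of $w$ and handling the terms carrying $u_r$ separately.
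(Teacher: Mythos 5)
Your proposal is correct and follows essentially the same route as the paper: both proofs differentiate the equation in $s$ using the flow-coordinate expression of the Laplacian from Proposition \ref{Lap in flow coord}, exploit the cancellation of the $K''$ and $(K')^2$ terms, identify the first-order terms as $b_w\cdot\nabla w$ via $w_r=N(s)\cdot\nabla w$, and convert the single leftover $-\tfrac{K'}{(1-rK)^3}u_r$ into the $u_x$ and $w$ contributions via \eqref{r derivative in temrs of x and s derivatives}. The only (immaterial) difference is organizational — you differentiate the full PDE at once, whereas the paper computes $\Delta w$ and $(\Delta u)_s/(1-rK)$ separately and compares them — and all your intermediate identities check out against the paper's.
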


The following lemma contains the key inequalities that enable one 
to apply the maximum principle to the auxiliary functions $J$ and $\bar J$.

\begin{lemma}\label{parab ineq lemma}
Let $\Omega,\Gamma,\gamma,M$ be as in Notation \ref{notation1} and assume \eqref{increasing curvature},\eqref{technical condition}.
Let $J,\bar J$ be the functions defined in \eqref{J def}, \eqref{tilde J def} and define the parabolic operators
\begin{equation}\label{P operator def}
\mathcal{P} J := J_t - \Delta J - a J - b\cdot \nabla J
\end{equation}
and
\begin{equation}\label{P tilde operator def}
\bar{\mathcal{P}} \bar J := \bar{J}_t - \Delta \bar J - \bar a \bar J - \bar b \cdot \nabla \bar J,
\end{equation}
with
\begin{eqnarray*}
a &=& -\dfrac{pK}{1-rK}|\nabla u|^{p-2}u_r - \dfrac{p}{1-rK}c'dF|\nabla u|^{p-2} + \dfrac{K^2}{(1-rK)^2}\\
  & & -\dfrac{K'}{(1-rK)^3}\dfrac{\alpha'}{\beta'} - \dfrac{2}{1-rK}c'dF', \\
b &=& p|\nabla u|^{p-2}\nabla u - \dfrac{2K}{1-rK}N(s), \\
\bar{a} &=& - \alpha' \dfrac{p}{1-rK}\bar c'dF|\nabla u|^{p-2} - \dfrac{2\alpha'}{1-rK} \bar c'dF', \\
\bar{b} &=& p|\nabla u|^{p-2} \nabla u.
\end{eqnarray*}
Then we have,
\begin{equation}\label{J parab ineq 1}
\dfrac{\mathcal{P} J}{cdF} \leq \Theta (A) + \dfrac{K'}{(1-rK)^3} \dfrac{1}{\beta' cdF} \bar{J},
\quad\hbox{  in $(D_\Gamma\cap \{ s>\eta\}\cap \Omega) \times (0,T)$,}
\end{equation}
and
\begin{equation}\label{J tilde parab ineq 1}
\dfrac{\bar{\mathcal{P}} \bar{J}}{\bar cdF} \leq \Theta (\bar{A}),
\quad\hbox{  in $(D_\Gamma\cap \{ s>0\}\cap \Omega) \times (0,T)$,}
\end{equation}
with
\begin{equation}\label{Theta def}
\begin{array}{rcl}
\Theta (A)
& = & -(p-1) q \dfrac{|\nabla u|^p}{u} + \dfrac{pk}{1-rK}\dfrac{u^q |\nabla u|^{p-2}}{r^\gamma} 
+ p\dfrac{|\nabla u|^{p-1}}{r} A \\
 \noalign{\vskip 1mm}
& & - q(q-1) \dfrac{|\nabla u|^2}{u^2} + \dfrac{2q}{r}\dfrac{|\nabla u|}{u} A + \dfrac{2qk}{1-rK} \dfrac{u^{q-1}}{r^\gamma} - \dfrac{\gamma(\gamma+1)}{r^2}, 
\end{array}
\end{equation}
and $A = A(r,s) = \gamma + \dfrac{rK}{1-rK}$, $\bar{A} = \bar{A}(r,s) = \gamma + \dfrac{\tau r}{1-rK}$, 
for some $\tau=\tau(\Omega)>0$.

In addition, there exists a constant $L=L(p,\Omega,\|u_0\|_{C^1})>0$ such that, for all 
real numbers $X>0$, we have
\begin{equation}\label{Theta ineq}
\begin{array}{rcl}
\Theta ( X) & \leq & \left[k B \left( pL^{q+p-2} + 2q L^{q-1} \right) + \dfrac{q}{q-1} X^2  
  + \dfrac{\sigma}{2} X - \gamma(\gamma+1) \right] \dfrac{1}{r^2}  \\
\noalign{\vskip 1mm}
                             &     & + \left( \dfrac{p^2 X}{2\sigma} L^{p-1} - (p-1)q\right) \dfrac{|\nabla u|^p}{u},
\end{array}
\end{equation}
where $B= B(r,s) = \dfrac{r^{(q-1)(2\sigma - \frac{1}{p-1})+2}}{1-rK(s)}$.
\end{lemma}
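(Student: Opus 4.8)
The plan is to prove all three assertions by direct computation; the formulas for $a,b,\bar a,\bar b$ and for $\Theta$ are exactly what one obtains, reverse-engineered so that the highest-order and badly-signed terms cancel. I would establish \eqref{J parab ineq 1} first. Write $J=w+g$ with $w=u_s/(1-rK)$ and $g=c(s)d(r)F(u)$; by linearity, $\mathcal{P}J$ is the sum of $w_t-\Delta w-aw-b\cdot\nabla w$ and $g_t-\Delta g-ag-b\cdot\nabla g$. Into the first piece I insert the equation for $w$ from Lemma~\ref{parabolic eq for u_s lemma} and use $b=b_w$; since $a$ differs from $a_w$ only by $\tfrac{p}{1-rK}c'dF|\nabla u|^{p-2}+\tfrac{2}{1-rK}c'dF'$, this piece becomes $(a_w-a)w+\tfrac{K'}{(1-rK)^3}\tfrac{u_x}{\beta'}$. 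For the second piece I use the PDE to write $g_t=cdF'(\Delta u+|\nabla u|^p)$ and expand $\Delta g$ via Proposition~\ref{Lap in flow coord}, and $\nabla g$, $\nabla u\cdot\nabla g$ via \eqref{gradient flow coordinates}--\eqref{diff op in flow coord2}. The terms $cdF'u_{rr}$, $cdF'u_{ss}$ and the first-order terms coming from $cdF'\Delta u$ cancel against the matching terms of $\Delta g$, and the $\tfrac{1}{s-\eta}$-singular terms $\tfrac{pc'dF|\nabla u|^{p-2}}{(1-rK)^2}u_s$ and $\tfrac{2c'dF'}{(1-rK)^2}u_s$ produced by $(a_w-a)w$ cancel against like terms of the second piece. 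Using $F=u^q$, $d=r^{-\gamma}$, $c'=k$ and the identity $\tfrac{\gamma}{r}+\tfrac{K}{1-rK}=\tfrac{A}{r}$, division by $cdF$ then yields: $-(p-1)q\tfrac{|\nabla u|^p}{u}$ (from $F'|\nabla u|^p$ and $-pF'|\nabla u|^p$), $-q(q-1)\tfrac{|\nabla u|^2}{u^2}$ (from $-F''|\nabla u|^2$), $-\tfrac{\gamma(\gamma+1)}{r^2}$ (from $-d''/d$), the two terms $\tfrac{pk}{1-rK}\tfrac{u^q|\nabla u|^{p-2}}{r^\gamma}$ and $\tfrac{2qk}{1-rK}\tfrac{u^{q-1}}{r^\gamma}$ (from the $c'dF$-part of $-ag$), the terms $\tfrac{pA}{r}|\nabla u|^{p-2}u_r\le p\tfrac{|\nabla u|^{p-1}}{r}A$ and $\tfrac{2qA}{r}\tfrac{u_r}{u}\le\tfrac{2q}{r}\tfrac{|\nabla u|}{u}A$ (using $|u_r|\le|\nabla u|$ and $A\ge 0$), a collection of pure-curvature terms that are $\le 0$ because $K,K',\gamma\ge 0$, $1-rK>0$, $s>\eta$, and a single residual term $\tfrac{K'\alpha'}{(1-rK)^3\beta'}$. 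Hence $\tfrac{\mathcal{P}J}{cdF}\le\Theta(A)+\tfrac{K'\alpha'}{(1-rK)^3\beta'}+\tfrac{K'}{(1-rK)^3}\tfrac{u_x}{\beta' cdF}$; substituting $u_x=\bar J-\bar c dF$ and using $\bar c/c=s/(s-\eta)\ge 1$ for $s>\eta$ turns the last term into $\tfrac{K'\bar J}{(1-rK)^3\beta' cdF}-\tfrac{K'}{(1-rK)^3\beta'}\tfrac{s}{s-\eta}$, and since $0<\alpha'\le 1\le s/(s-\eta)$ the subtracted term dominates the residual, which gives \eqref{J parab ineq 1}.

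For \eqref{J tilde parab ineq 1} I would write $\bar J=u_x+\bar g$ with $\bar g=\bar c(s)d(r)F(u)$. As $\partial_x$ commutes with $\Delta$, differentiating the PDE in $x$ gives $(u_x)_t-\Delta u_x=\bar b\cdot\nabla u_x$, so $u_x$ contributes only $-\bar a\,u_x$ to $\bar{\mathcal{P}}\bar J$; the $\bar g$-part is treated exactly as above, now with $\bar b=p|\nabla u|^{p-2}\nabla u$ (hence without the $N(s)$-term and its cancellation) and with $\bar c=ks$, so $\bar c'/\bar c=1/s$. Expressing the surviving first-order terms in cartesian derivatives by $u_x=\tfrac{\alpha'u_s}{1-rK}-\beta' u_r$ and \eqref{r derivative in temrs of x and s derivatives} — this is where $\beta'\neq 0$, i.e.\ $s>0$, is needed — the curvature-free part again assembles into the analogue of $\Theta$, while the leftover terms carrying one derivative of $(\alpha,\beta)$ are controlled, using the smoothness of $\partial\Omega$ together with \eqref{increasing curvature}--\eqref{technical condition}, by $\tfrac{\tau r}{1-rK}$ times a factor bounded by $\Theta$; this replaces $A$ by $\bar A=\gamma+\tfrac{\tau r}{1-rK}$, where $\tau=\tau(\Omega)$ is any bound over the compact curve $\Gamma$ for the relevant combinations of $\alpha',\beta',\alpha'',\beta''$ (equivalently of $K,K'$, via \eqref{K definition 2}--\eqref{K definition 3}).

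It remains to prove the pointwise bound \eqref{Theta ineq}, for which I observe that $\Theta(X)$ is affine in $X$ and I would use two a priori estimates: $u(x,t)\le L\,\delta(x)^{(p-2)/(p-1)}$, obtained by integrating \eqref{bernstein estimate 1} inward from the boundary where $u=0$ (the exponent $-\tfrac{1}{p-1}$ being integrable precisely because $p>2$), and $|\nabla u(x,t)|\le L\,\delta(x)^{-1/(p-1)}$, which is \eqref{bernstein estimate 1} after absorbing $C_2$. Since $\delta$ is comparable to $r$ on the region under consideration, these read $u\le Lr^{(p-2)/(p-1)}$ and $|\nabla u|\le Lr^{-1/(p-1)}$ after possibly enlarging $L=L(p,\Omega,\|u_0\|_{C^1})$, and they give $\tfrac{u^q|\nabla u|^{p-2}}{r^\gamma}\le L^{q+p-2}r^{(q-1)(p-2)/(p-1)-\gamma}=L^{q+p-2}(1-rK)\tfrac{B}{r^2}$ and $\tfrac{u^{q-1}}{r^\gamma}\le L^{q-1}(1-rK)\tfrac{B}{r^2}$ — the exponent identity $\tfrac{(q-1)(p-2)}{p-1}-\gamma=(q-1)\bigl(2\sigma-\tfrac{1}{p-1}\bigr)$ being exactly what produces $B$ — accounting for the $kB(pL^{q+p-2}+2qL^{q-1})$ part of the bracket. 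For the two $X$-linear terms of $\Theta$ I apply Young's inequality: $pX\tfrac{|\nabla u|^{p-1}}{r}\le(p-1)\lambda^{p/(p-1)}X\tfrac{|\nabla u|^p}{u}+\lambda^{-p}X\tfrac{u^{p-1}}{r^p}$, bound $\tfrac{u^{p-1}}{r^p}\le L^{p-1}r^{-2}$ and take $\lambda^p=2L^{p-1}/\sigma$, so that the second term is $\tfrac{\sigma}{2}Xr^{-2}$ and (after a further enlargement of $L$, legitimate since $p-2>0$) the first is $\le\tfrac{p^2X}{2\sigma}L^{p-1}\tfrac{|\nabla u|^p}{u}$; and $\tfrac{2qX}{r}\tfrac{|\nabla u|}{u}\le\tfrac{q}{q-1}X^2r^{-2}+q(q-1)\tfrac{|\nabla u|^2}{u^2}$, whose last term cancels the $-q(q-1)\tfrac{|\nabla u|^2}{u^2}$ of $\Theta$. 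Collecting the contributions to $r^{-2}$ and to $\tfrac{|\nabla u|^p}{u}$ gives \eqref{Theta ineq}.

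The main obstacle is the first two steps: the derivation of the parabolic equations for $J$ and $\bar J$ in boundary-fitted coordinates is long, and the delicate point is to keep track of every curvature-dependent term — $K$, $K'$, $\alpha'$, $\beta'$, $\alpha''$, $\beta''$, and the weights $1-rK$ — and to verify that, after the algebraic cancellations forced by the choices of $a,b$ (resp.\ $\bar a,\bar b$), the remainder is exactly $cdF\,\Theta(A)$ (resp.\ $\bar c dF\,\Theta(\bar A)$) up to terms of the right sign and the single $\bar J$-term; in the $\bar J$ case one additionally has to estimate carefully the terms involving one derivative of the parametrization, which is what produces the constant $\tau(\Omega)$.
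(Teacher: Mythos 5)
Your plan reproduces the paper's proof essentially step for step and is correct: the same decomposition of $J$ into $w=u_s/(1-rK)$ plus $c\,d\,F(u)$ with Lemma~\ref{parabolic eq for u_s lemma} feeding the $w$-part, the same regrouping via $w=J-cdF$ and $\nabla u\cdot\nabla w=\nabla u\cdot\nabla J-\nabla u\cdot\nabla(cdF)$, the same treatment of the residual curvature term through $u_x=\bar J-\bar c dF$ with $\alpha'\le 1\le \bar c/c$, and the same Bernstein/Young estimates for $\Theta$ (your Young splitting with exponents $(p/(p-1),p)$, plus an enlargement of $L$, is an equivalent variant of the paper's $(2,2)$ splitting followed by $u|\nabla u|^{p-2}\le L^{p-1}$). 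The one imprecision is your account of $\tau$: it is not a generic bound on the derivatives of the parametrization, nor a consequence of \eqref{increasing curvature}--\eqref{technical condition}, but comes from the linear vanishing $\beta'(s)\le\tau s$ — available because $\beta'(0)=0$ by the normalization $T(0)=(1,0)$ in Notation~\ref{notation1} — which is exactly what compensates the factor $\bar c'/\bar c=1/s$ in the $\bar J$-computation and produces $\bar A=\gamma+\tau r/(1-rK)$.
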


Since the proofs of these two Lemmas require long computations, we postpone them after the proof of Theorem \ref{local result}.

\subsection{Proof of Theorem \ref{local result}}

\begin{proof}[Step 1: Preparations]
Fix any $\eta\in (0,s_0/2)$ and recall the definition of the auxiliary function $J$ given in \eqref{J def}
$$
J = \dfrac{u_s}{1-rK} + c(s) d(r) F(u)
\ = \dfrac{u_s}{1-rK} +k(s-\eta)r^{-\gamma} u^q
 \qquad \text{in}\ \omega_0\times (t_0,T),
$$ 
with $1<q<2$, $\gamma = (1-2\sigma)(q-1)>0$, where 
$\sigma\in \bigl(0,\frac{1}{2(p-1)}\bigr)$ is fixed, and $k\in (0,1)$ and $\gamma$ will be taken small (i.e., $q$ close to $1$).
Without loss of generality, by taking $r_0>0$ possibly smaller, we may assume that
$$\omega_0=M((0,r_0)\times (0,s_0))\subset \Omega,$$
where $M$ is the coordinate map defined in \eqref{flow coordinates}.

Observe that, for each $t_0< T'<T$, we have
\begin{equation}\label{grad u bounded before T'}
u\leq Cr \qquad \text{in} \ \omega_0 \times [t_0,T'],
\end{equation}
for some $C=C(T')>0$. Since $\gamma < q$, we have in particular
\begin{equation}\label{J cont before T'}
J \in C(\overline{\omega_0}\times [0,T)) \cap C^{2,1} (\omega_0\times (0,T)).
\end{equation}
Fix $t_1=\frac{t_0+T}{2}$, $s_1=\frac{3}{4}s_0$ 
and set $K_1 = \displaystyle\max_{[0,s_0]} K(s)$.
Our aim is to use the maximum principle to prove that
\begin{equation}\label{J negative}
J\leq 0 \quad \text{in} \ \omega_{1,\eta} \times (t_1,T), 
\end{equation}
where 
$$\omega_{1,\eta} :=  M((0,r_1)\times (\eta,s_1)),$$
for $r_1\in \bigl(0,\min(r_0,\textstyle\frac{1}{2K_1})\bigr)$ to be chosen below.

Note that since $1- r K\ge 1/2$ in $\omega_{1,\eta}$, inequality \eqref{J negative} implies
\begin{equation}\label{u_s negative}
u_s\leq -(1- rK) cdF \le -\textstyle\frac{k}{2} (s-\eta) r^{-(1-2\sigma)(q-1)}u^q.
\end{equation}
Hence, if \eqref{J negative} is proved, then integrating \eqref{u_s negative} over the curve 
$$\{ \gamma (\theta) + r N(\theta) ; \ \theta \in [ \eta ,s)\}$$
for $\eta<s<s_1$, $0<r< r_1$  and $t_1<t<T$, we get
$$
u\leq C (s-\eta)^{-\frac{2}{q-1}} r^{1-2\sigma}\le  C (s-\eta)^{-\frac{2}{q-1}} \delta^{1-2\sigma}(x,y) \qquad \text{in} \ \omega_{1,\eta} \times (t_1,T),
$$
for some constant $C=C(\eta)>0$.
Then, since $1-2\sigma>(p-2)/(p-1)$, 
it will follow from Lemma~\ref{nondegeneracy lemma} and symmetry
that $GBUS (u_0) \subset \gamma\bigl([-\eta, \eta]\bigr)$. Since $\eta$ is arbitrarily small, we will conclude that $GBUS (u_0)=\{(0,0)\}$.

\textit{Step 2: Parabolic inequality for $J$.}

It follows from \eqref{J parab ineq 1} and \eqref{Theta ineq} in Lemma \ref{parab ineq lemma} that, for the parabolic operator $\mathcal{P}$ defined in \eqref{P operator def}, we have
\begin{eqnarray}
\notag
&\dfrac{\mathcal{P} J}{cdF} \leq \left[ k B (p L^{q+p-2} + 2q L^{q-1}) + \dfrac{q}{q-1} A^2 + \dfrac{\sigma}{2} A - \gamma(\gamma+1) \right] \dfrac{1}{r^2} \\
\noalign{\vskip 1mm}
& \qquad\qquad +\left(\dfrac{p^2 A}{2\sigma} L^{p-1} - (p-1)q\right) \dfrac{|\nabla u|^p}{u} + \dfrac{K'}{(1-rK)^3} \dfrac{1}{\beta' cdF}\bar{J},
\quad\hbox{in $ \omega_{1,\eta} \times (t_0,T)$, } &
\label{J parab ineq 2}
\end{eqnarray}
with $L=L(p,\Omega,\|u_0\|_{C^1})>0$. 
At this point we fix $\gamma$ and~$r_1$ satisfying
\begin{equation}\label{hypGamma}
0<\gamma <\sigma\min\Bigl(\frac{1}{4}, \frac{1}{p^2L^{p-1}}\Bigr)<1
\end{equation}
and
\begin{equation}\label{hypr1}
0<r_1<\min\Bigl[r_0,1,\frac{\gamma^2}{2K_1},\frac{\gamma^2}{2\tau},\frac{3\gamma^2}{2(pL^{q+p-2} + 2qL^{q-1})}  \Bigr],
\end{equation}
where $\tau=\tau(\Omega)>0$ is given by Lemma \ref{parab ineq lemma} (some of the conditions in \eqref{hypGamma}, \eqref{hypr1} will be used only in Step 3),
and we set
$$\omega_1:=M((0,r_1)\times (0,s_1)).$$
It follows, from $r_1 < \frac{1}{2K_1}$, that 
\begin{equation}\label{1minusrK}
1- r K\ge 1/2\quad\hbox{ in $\omega_1$,}
\end{equation}
hence
\begin{eqnarray}
\label{estimAB1}
A &=& \gamma + \dfrac{rK}{1-rK}\le \gamma(\gamma+1)\quad\hbox{ in $\omega_1$,} \\ 
 \label{estimAB2}
B &=& \dfrac{r^{(q-1)(2\sigma - \frac{1}{p-1})+2}}{1-rK}\le  2r_1 
\quad\hbox{ in $\omega_1$,}
\end{eqnarray}
where we used $(q-1)\left(2\sigma - \frac{1}{p-1}\right) + 2  \geq 1$, which follows from $1<q<p$.
As a consequence of \eqref{hypGamma} and \eqref{estimAB1}, using $p>2$ and $q>1$, we first get
\begin{equation}\label{condAB3}
\dfrac{p^2 A}{2\sigma} L^{p-1} - (p-1)q \leq \dfrac{p^2 \gamma}{\sigma} L^{p-1} -  1 \leq 0
\quad\hbox{ in $\omega_1$.}
\end{equation}
Next, since $\gamma = (1-2\sigma)(q-1)$, we deduce from \eqref{hypGamma} and \eqref{estimAB1} that
$$
\begin{array}{ll}
\dfrac{q}{q-1} A^2 + \dfrac{\sigma}{2}A - \gamma(\gamma+1)
&\le \gamma(\gamma+1)\left((1-2\sigma+\gamma)(\gamma+1) +  \dfrac{\sigma}{2} -1\right) \\
\noalign{\vskip 1mm}
&= \gamma(\gamma+1)\left([\gamma+2(1-\sigma)]\gamma - \dfrac{3\sigma}{2}\right)\\
\noalign{\vskip 1mm}
&\le 3 \gamma(\gamma+1)\left(\gamma - \dfrac{\sigma}{2}\right) \le - 3\gamma^2
\quad\hbox{ in $\omega_1$.}
\end{array}
$$
In view of \eqref{hypr1}, \eqref{estimAB2}, and recalling $k\in (0,1)$, we obtain
\begin{equation}\label{condAB4}
\begin{array}{rcl}
&k B (p  L^{q+p-2} + 2q L^{q-1}) + \dfrac{q}{q-1} A^2 + \dfrac{\sigma}{2} A - \gamma(\gamma+1)  \\
\noalign{\vskip 1mm}
&\qquad\qquad\qquad\qquad\qquad\qquad \le 2r_1  (p L^{q+p-2} + 2q L^{q-1}) -3\gamma^2\le 0\quad\hbox{ in $\omega_1$.}
\end{array}
\end{equation}
It follows from \eqref{J parab ineq 2}, \eqref{condAB3}, \eqref{condAB4} that, for all $k\in (0,1)$,
\begin{equation}\label{control parab ineq 1/r^2}
 \mathcal{P} J 
 \leq \dfrac{ K'}{\beta'(1-rK)^3 } \bar{J}  \qquad \text{in} \ \omega_{1,\eta}\times (t_0,T).
\end{equation}

Moreover, in view of \eqref{increasing curvature}, \eqref{technical condition}, \eqref{grad u bounded before T'} and \eqref{1minusrK},
the coefficient $a$ in $\mathcal{P}$ satisfies
\begin{equation}\label{a bounded before T'}
\sup_{\omega_{1,\eta}\times (t_0,T')} |a| < \infty, 
\qquad \text{for any} \ T'<T.
\end{equation}

\goodbreak

\textit{Step 3: Control of $\bar{J}$.}

We claim that
under assumptions \eqref{hypGamma}, \eqref{hypr1}, there exists 
$\tilde k\in (0,1)$ such that, for all $k\in (0,\tilde k]$,
\begin{equation}\label{control u_x + cdF}
\bar{J}= u_x + \bar{c}dF
 \ = u_x + ksr^{-\gamma} u^q
 \le 0 \quad \text{in} \  \omega_1\times (t_1,T),
\end{equation}
hence
\begin{equation}\label{control PJ}
\mathcal{P} J \le 0 \quad \text{in} \  \omega_{1,\eta}\times (t_1,T).
\end{equation}
By \eqref{J tilde parab ineq 1} and \eqref{Theta ineq} in Lemma \ref{parab ineq lemma}, we have 
the following inequality for the parabolic operator $\bar{\mathcal{P}}$ defined in \eqref{P tilde operator def}:
$$
\begin{array}{rcl}
\dfrac{\bar{\mathcal{P}} \bar{J}}{\bar{c}dF} & \leq & \left[k B \left( p L^{q+p-2} + 2q L^{q-1} \right) + \dfrac{q}{q-1} \bar{A}^2  
  + \dfrac{\sigma}{2} \bar{A} - \gamma(\gamma+1) \right] \dfrac{1}{r^2}  \\
\noalign{\vskip 1mm}
                             &     & + \left( \dfrac{p^2 \bar{A}}{2\sigma}  L^{p-1} - (p-1)q\right) \dfrac{|\nabla u|^p}{u},
                             \qquad\hbox{in $\omega_1 \times (t_0,T)$},
\end{array}
$$
where
$$\bar A = \gamma + \dfrac{\tau(\Omega) r}{1-rK}
\quad\hbox{ and }\quad
B = \dfrac{r^{(q-1)(2\sigma - \frac{1}{p-1} )+2}}{1-rK}.$$
Moreover,  under  assumptions \eqref{hypGamma}, \eqref{hypr1}
(which in particular guarantee $\bar A\le \gamma(\gamma+1)$ in $\omega_1$), the argument leading to \eqref{condAB3}, \eqref{condAB4} yields:
$$
\dfrac{p^2 \bar A}{2\sigma} L^{p-1} - (p-1)q \leq 0
\quad\hbox{ in $\omega_1$,}
$$
and
$$
k B (p L^{q+p-2} + 2q L^{q-1}) + \dfrac{q}{q-1} \bar A^2 + \dfrac{\sigma}{2} \bar A - \gamma(\gamma+1) \le 0
\quad\hbox{ in $\omega_1$.}
$$
For any $k\in (0,1)$, we thus obtain
\begin{equation}\label{parab ineq tilde J}
\bar{\mathcal{P}} \bar{J}\leq 0, 
                             \qquad\hbox{in $\omega_1 \times (t_0,T)$}.
\end{equation}

By \eqref{GBUS localisation}, there exists a constant $C>0$ such that 
$$
|\nabla u|\leq C \quad \text{in} \quad \left(\omega_0\setminus M \left(\left(0,r_1/2\right)\times\left(0,\theta_1\right) \right) \right)
\times (t_0,T),
$$ 
for $\theta_1\in \left(\frac{s_0}{2},s_1\right)$.
Consequently, by parabolic estimates, $u$ can be extended to a function 
such that 
\begin{equation}\label{extensionT}
u, \nabla u\in C^{2,1}(\tilde{\mathcal{Q}}) \quad\hbox{ where }
\tilde{\mathcal{Q}} = \left(\overline{\omega_0}\setminus M \left(\left(0,\textstyle\frac{3r_1}{4}\right)\times\left(0,\theta_2\right) \right) \right)\times (t_0,T], 
\end{equation}
with $\theta_2\in (\theta_1, s_1)$.
Fix any $t_2\in (t_0,t_1)$ and $r_2\in \bigl(r_1,\min(r_0,\frac{1}{2K_1})\bigr)$. 
Since $w = u_x$ satisfies 
$$w_t - \Delta w = p|\nabla u|^{p-2}\nabla u\cdot \nabla w \quad \text{in} \ \omega_0\times(t_0,T),$$
by Hopf's Lemma, \eqref{extensionT} and \eqref{monotonicity condition},
there exist $c_1,c_2>0$ such that
\begin{eqnarray}
\label{hopf serrin tilde1}
u_x \leq -c_1 r & \text{on} \ (0,  r_2)\times\{s_1\}\times( t_2,T), \\
\label{hopf serrin tilde2}
u_x \leq -c_1 s & \text{on} \ \{r_1\} \times(0,s_1)\times ( t_2,T), \\
\label{hopf serrin tilde3}
u \leq c_2 r & \text{on} \ (0,r_1)\times \{s_1\} \times( t_2,T).
\end{eqnarray}
Moreover, in view of \eqref{monotonicity condition}, \eqref{hopf serrin tilde1}, and since $M \left( (0, r_2) \times (0,s_0)\right) \subset \Omega$,
we can apply Lemma~\ref{Serrin corner lemma 2}  to deduce the existence of $\tilde{c}_1>0$ such that
\begin{equation}\label{tilde J initial data0}
u_x (r,s,t_1) \leq -\tilde{c}_1 rs \quad \text{in} \  (0, r_1) \times (0,s_1).
\end{equation}

Now, on the lateral boundary of $\omega_1\times (t_1,T)$, we have
\begin{eqnarray}
\label{lateral bound 1 tilde}
& & \bar{J} (0,s,t) = 0 \quad \text{on} \ \{ 0\}\times (0,s_1)\times (t_1,T), \\
\label{lateral bound 2 tilde}
& & \bar{J} (r,0,t) \leq  0 \quad \text{on} \ (0, r_1)\times \{0\} \times (t_1,T), \\
\label{lateral bound 3 tilde}
& & \bar{J}( r_1,s,t) \leq  -c_1 s + k s r_1^{-\gamma} \|u_0\|_\infty^q \leq 0
\quad \text{on} \ \{ r_1 \}\times(0,s_1)\times(t_1,T), \\
\label{lateral bound 4 tilde}
& & \bar{J}(r, s_1,t) \leq  -c_1 r + k s_1  c_2^q r^{q-\gamma} \leq 0 \quad 
\text{on} \ (0, r_1)\times\{s_1\} \times (t_1,T),
\end{eqnarray}
 for any $0<k  \le\tilde k$ with $\tilde k>0$ sufficiently small,
where we used $q>\gamma+1$.  
 And at the initial time $t=t_1$,
for any $0<k \le\tilde{k}$ with possibly smaller $\tilde k>0$, inequality~\eqref{tilde J initial data0} guarantees 
\begin{equation}\label{tilde J initial data}
\bar{J} (r,s,t_1) \leq -\tilde{c}_1 rs + k s c_2^q r^{q-\gamma} \leq 0 \quad \text{in} \ (0,r_1) \times (0,s_1).
\end{equation}
Moreover, owing to \eqref{grad u bounded before T'} and \eqref{1minusrK}, 
we have 
\begin{equation}\label{a tilde bounded before T'}
\sup_{\omega_1\times (t_0,T')} \bar{a} < \infty, \qquad \text{for any} \ T'<T.
\end{equation}

Then, for any $0<k\leq \tilde{k}$, claim \eqref{control u_x + cdF} follows from \eqref{parab ineq tilde J}, 
 \eqref{lateral bound 1 tilde}--\eqref{a tilde bounded before T'} 
and the maximum principle applied to $\bar J$ in $\omega_1 \times (t_1,T)$ 
 (see Proposition 52.4 in \cite{QS}).
Note that the use of the maximum principle is justified in view of 
the regularity property \eqref{J cont before T'}, which obviously also applies for $\bar{J}$.
Finally, \eqref{control PJ} follows from \eqref{increasing curvature}, \eqref{technical condition}, \eqref{control parab ineq 1/r^2} and \eqref{control u_x + cdF}.

\textit{Step 4: Initial and boundary conditions for $J$.}

Let $w=\dfrac{u_s}{1-rK}$. 
In view of Lemma \ref{parabolic eq for u_s lemma}, \eqref{increasing curvature}, \eqref{technical condition} and \eqref{monotonicity condition}, it follows that
\begin{equation*}
w_t - \Delta w  \ - a_w w - b_w\cdot \nabla w = \dfrac{K'}{(1-rK)^3} \frac{1}{\beta'}u_x \le 0,
\qquad\hbox{  in $\omega_0\times (t_0,T)$, }
\end{equation*}
with
\begin{equation*}
\begin{array}{l}
a_w = \dfrac{K^2}{(1-rK)^2} - \dfrac{pK}{1-rK}|\nabla u|^{p-2} u_r - \dfrac{K'}{(1-rK)^3}\dfrac{\alpha'}{\beta'}, \\
 \noalign{\vskip 1mm}
b_w = p |\nabla u|^{p-2}\nabla u - \dfrac{2K}{1-rK}N(s).
\end{array}
\end{equation*}
Note in particular that $\beta'(s)$ and $1-rK$ are uniformly positive for $s\in [\eta,s_1]$ by \eqref{technical condition} and \eqref{1minusrK}.
In view of  \eqref{monotonicity condition} and \eqref{extensionT}, we may thus apply 
the strong maximum principle and Hopf's Lemma to deduce the existence of $c_3,c_4,c_5>0$ (possibly depending on $\eta$) such that
\begin{eqnarray}
\label{hopf serrin tilde B1}
u_s \leq -c_3 r & \text{on} \ (0,r_1)\times\{s_1\}\times(t_1,T), \\
u_s \leq -c_4 & \text{on} \ \{r_1\} \times(\eta,s_1)\times (t_1,T), \\
\label{serrin estimate local result}
u_s \leq -c_3 r & \text{in} \ \omega_{1,\eta}\times\{t_1\},
\end{eqnarray}
as well as
$$
u \leq c_5 r, \quad\hbox{ on $(0,r_1)\times \{s_1\} \times(t_1,T).$}
$$

Consequently, we may choose $\tilde k>0$ small enough (possibly depending on $\eta$) such that,
for any $0<k \le\tilde{k}$, on the lateral boundary of $\omega_{1,\eta}\times (t_1,T)$, we have
\begin{eqnarray}
\label{lateral bound 1}
& & J (0,s,t) = 0 \quad \text{on} \ \{ 0\}\times (\eta,s_1)\times (t_1,T), \\ 
\label{lateral bound 2}
& & J (r,\eta,t) \le  0 \quad \text{on} \ (0, r_1)\times \{\eta\} \times (t_1,T), \\
\label{lateral bound 3}
& & J (r_1,s,t) \leq  -c_4 + k s_1 r_1^{-\gamma} \|u_0\|_\infty^q \leq 0 
\quad \text{on} \ \{r_1\}\times(\eta,s_1)\times(t_1,T), \\
\label{lateral bound 4}
& & J (r, s_1,t) \leq  -c_3 r + k s_1 c_5^q r^{q-\gamma} \leq 0 \quad 
\text{on} \ (0,r_1)\times\{s_1\} \times (t_1,T),
\end{eqnarray}
where we used $q>\gamma+1$, and at the initial time $t=t_1$,
\begin{equation}\label{initial condition for J}
 J(r,s,t_1) \leq -c_3r + ks_1 c_4^q  r^{q-\gamma} \leq 0, \quad\hbox{in $\omega_{1,\eta}$}.
\end{equation}

Then \eqref{J negative} follows from  \eqref{a bounded before T'}, \eqref{control PJ},  
\eqref{lateral bound 1}--\eqref{initial condition for J} and the maximum principle
applied to $J$ in $\omega_{1,\eta} \times (t_1,T)$  
(see Proposition 52.4 in \cite{QS}).
Note that the use of the maximum principle is justified in view of \eqref{J cont before T'}.

In view of Step 1, this concludes the proof of the Theorem.
\end{proof}

\subsection{Proof of auxiliary parabolic inequalities (Lemmas \ref{parabolic eq for u_s lemma} and \ref{parab ineq lemma})}

\begin{proof}[Proof of Lemma \ref{parabolic eq for u_s lemma}]
Let $w = \dfrac{u_s}{1-rK}$, and compute, in $(D_\Gamma\cap \{s>0\} \cap \Omega  )\times (0,T)$,
\begin{eqnarray*}
w_r &=& \dfrac{u_{rs}}{1-rK} + \dfrac{K}{(1-rK)^2} u_s, \\
w_{rr} &=& \dfrac{u_{rrs}}{1-rK} + 2\dfrac{Ku_{rs}}{(1-rK)^2} + 2\dfrac{K^2}{(1-rK)^3} u_s, \\
w_s &=& \dfrac{u_{ss}}{1-rK} + \dfrac{rK'}{(1-rK)^2}u_s, \\
w_{ss} &=& \dfrac{u_{sss}}{1-rK} + 2 \dfrac{rK'}{(1-rK)^2} u_{ss} + 2 \dfrac{r^2K'^2}{(1-rK)^3} u_s + \dfrac{rK''}{(1-rK)^2} u_s.
\end{eqnarray*}
Then, using Proposition~\ref{Lap in flow coord} we get
\begin{equation}\label{laplacian w1}
\begin{array}{rcl}
\Delta w &=& w_{rr} - \dfrac{K}{1-rK} w_r + \dfrac{1}{(1-rK)^2} w_{ss} + \dfrac{rK'}{(1-rK)^3} w_s \\
 \noalign{\vskip 1mm}
         &=& \dfrac{1}{1-rK} u_{rrs} + \dfrac{K}{(1-rK)^2} u_{rs} + \dfrac{1}{(1-rK)^3} u_{sss} + \dfrac{K^2}{(1-rK)^3} u_s \\
          \noalign{\vskip 1mm}
         & & + 3 \dfrac{rK'}{(1-rK)^4} u_{ss} + \dfrac{rK''}{(1-rK)^4} u_s + 3 \dfrac{r^2 K'^2}{(1-rK)^5} u_s
\end{array}
\end{equation}
and also
\begin{equation*}
\begin{array}{rcl}
 \noalign{\vskip 1mm}
\dfrac{(\Delta u)_s}{1-rK} &=& \dfrac{1}{1-rK}u_{rrs} - \dfrac{K}{(1-rK)^2} u_{rs} + \dfrac{1}{(1-rK)^3} u_{sss} + 3 \dfrac{rK'}{(1-rK)^4} u_{ss} \\
 \noalign{\vskip 1mm}
& & + \dfrac{rK''}{(1-rK)^4} u_s + 3 \dfrac{r^2 K'^2}{(1-rK)^5} u_s - \dfrac{K'}{(1-rK)^2} u_r - \dfrac{rKK'}{(1-rK)^3} u_r \\
 \noalign{\vskip 1mm}
&=& \Delta w - \dfrac{1}{(1-rK)^2} \left( \dfrac{K'}{1-rK} u_r + 2K u_{rs} + \dfrac{K^2}{1-rK} u_s \right),
\end{array}
\end{equation*}
and replacing $u_{rs} = (1-rK)w_r - \dfrac{K}{1-rK}u_s$ and using identity \eqref{r derivative in temrs of x and s derivatives}, we obtain
\begin{equation}\label{laplacian w 2}
\begin{array}{rcl}
\Delta w &=& \dfrac{(\Delta u)_s}{1-rK} + \dfrac{2K}{1-rK}w_r - \left(\dfrac{K^2}{(1-rK)^2} - 
\dfrac{K'}{(1-rK)^3}\dfrac{\alpha'}{\beta'}\right) w \\
 \noalign{\vskip 1mm}
& & - \dfrac{K'}{(1-rK)^3} \dfrac{1}{\beta'} u_x.
\end{array}
\end{equation}
Note that the use of \eqref{r derivative in temrs of x and s derivatives} is justified since $s>0$, and then $\beta'>0$.
Then we get
\begin{equation}\label{parabolic eq w 1}
\begin{array}{rcl}
w_t - \Delta w &=& \dfrac{(|\nabla u|^p)_s}{1-rK} - \dfrac{2K}{1-rK}w_r + \left(\dfrac{K^2}{(1-rK)^2} - 
\dfrac{K'}{(1-rK)^3}\dfrac{\alpha'}{\beta'}\right) w \\
 \noalign{\vskip 1mm}
& & + \dfrac{K'}{(1-rK)^3} \dfrac{1}{\beta'} u_x.
\end{array}
\end{equation}

Now, we write
\begin{equation}\label{nabla u p s}
(|\nabla u|^p)_s = p |\nabla u|^{p-2} \nabla u\cdot (\nabla u)_s,
\end{equation}
and using \eqref{diff op in flow coord2}, we obtain
\begin{eqnarray*}
\nabla u \cdot (\nabla u)_s &=& \left( u_r N(s) + \dfrac{u_s}{1-rK}T(s) \right) \cdot \left(u_{rs}N(s) 
+ \dfrac{u_{ss}}{1-rK}T(s) + u_r(N(s))_s\right. \\
 & & \left.+ \dfrac{rK'}{(1-rK)^2} u_s T(s) + \dfrac{u_s}{1-rK}(T(s))_s\right),
\end{eqnarray*}
where $T(s)$ and $N(s)$ are defined in Notation \ref{notation1}.

We observe that
\begin{eqnarray*}
N(s)\cdot (N(s))_s = T(s)\cdot(T(s))_s=0, \\
N(s)\cdot(T(s))_s + T(s)\cdot(N(s))_s = \left( N(s)\cdot T(s)\right)_s = 0,
\end{eqnarray*} 
so we have
\begin{eqnarray*}
\nabla u\cdot(\nabla u)_s &=& \nabla u\cdot\nabla(u_s) + \dfrac{rK'}{(1-rK)^3} u_s^2 \\
&=& (1-rK)\nabla u\cdot\nabla w - w\nabla u\cdot \nabla(rK) + w\dfrac{rK'}{(1-rK)^2} u_s \\
&=& (1-rK) \nabla u\cdot\nabla w - K u_r w.
\end{eqnarray*}
Plugging this in \eqref{nabla u p s}, we obtain
\begin{equation}\label{nabla u p s 2}
\dfrac{(|\nabla u|^p)_s}{1-rK} = p |\nabla u|^{p-2} \nabla u\cdot \nabla w - \dfrac{pK}{1-rK}|\nabla u|^{p-2} u_r w,
\end{equation}
and combining this with \eqref{parabolic eq w 1}, we obtain \eqref{parabolic ineq second class}.
\end{proof}

\eject

\begin{proof}[Proof of Lemma \ref{parab ineq lemma}]

\ 

\textit{Proof of inequality \eqref{J parab ineq 1}:}
Using  Proposition~\ref{Lap in flow coord} and \eqref{laplacian w 2}, we compute,
in $(D_\Gamma\cap \{s>\eta\} \cap \Omega)\times (0,T)$,
$$
\begin{array}{rcl}
J_t &=& \dfrac{u_{ts}}{1-rK} + cdF'u_t, \\
 \noalign{\vskip 1mm}
\Delta J &=& \Delta w + cdF'\Delta u + cdF'' |\nabla u|^2 + \dfrac{2}{1-rK} c'dF'w \\
 \noalign{\vskip 1mm}
         & & + 2cd'F'u_r + cd''F - \dfrac{K}{1-rK}cd'F + \dfrac{rK'}{(1-rK)^3} c'dF \\
 \noalign{\vskip 1mm}
         &=& \dfrac{(\Delta u)_s}{1-rK} - \left( \dfrac{K^2}{(1-rK)^2} - \dfrac{K'}{(1-rK)^3}\dfrac{\alpha'}{\beta'} - \dfrac{2}{1-rK}c'dF' \right)w \\
\noalign{\vskip 1mm}
         & &  + \dfrac{2K}{1-rK} w_r + cdF'\Delta u + cdF'' |\nabla u|^2 - \dfrac{K}{1-rK}cd'F \\
 \noalign{\vskip 1mm}
         & & + cd''F + \dfrac{rK'}{(1-rK)^3} c'dF + 2cd' F' u_r - \dfrac{K'}{(1-rK)^3} \dfrac{1}{\beta'} u_x .
\end{array}
$$
Then, it follows that
\begin{equation*}
\begin{array}{rcl}
J_t - \Delta J &=& \dfrac{(|\nabla u|^p)_s}{1-rK} + \left( \dfrac{K^2}{(1-rK)^2} -\dfrac{K'}{(1-rK)^3}\dfrac{\alpha'}{\beta'} - \dfrac{2}{1-rK}c'dF' \right) w  \\
\noalign{\vskip 1mm}
               & & - \dfrac{2K}{1-rK} w_r + cdF' |\nabla u|^p - cdF''|\nabla u|^2  + \dfrac{K}{1-rK} cd'F  \\
\noalign{\vskip 1mm}
               & & - cd'' F - \dfrac{rK'}{(1-rK)^3} c'dF - 2cd'F'u_r + \dfrac{K'}{(1-rK)^3}\dfrac{1}{\beta'} u_x,
\end{array}
\end{equation*}
and  plugging \eqref{nabla u p s 2} here, we get
\begin{eqnarray*}
J_t - \Delta J &=&  \left(-\dfrac{pK}{1-rK}|\nabla u|^{p-2} u_r + \dfrac{K^2}{(1-rK)^2} - \dfrac{K'}{(1-rK)^3}\dfrac{\alpha'}{\beta'} - \dfrac{2}{1-rK}c'dF'\right)w \\
& & + p|\nabla u|^{p-2} \nabla u\cdot \nabla w - \dfrac{2K}{1-rK}w_r + cdF'|\nabla u|^p - cdF''|\nabla u|^2 - 2cd'F'u_r \\
& & + \dfrac{K}{1-rK} cd'F  - cd''F - \dfrac{rK'}{(1-rK)^3}c'dF  + \dfrac{K'}{(1-rK)^3}\dfrac{1}{\beta'} u_x.
\end{eqnarray*}

Now, we use the following identities:
\begin{eqnarray*}
w &=& J - cdF, \\
w_r &=& N(s)\cdot\nabla J - cdF' u_r - cd'F, \\
\nabla u\cdot\nabla w &=& \nabla u\cdot\nabla J - \nabla u\cdot \nabla (cdF),
\end{eqnarray*}
and
\begin{equation}\label{nabla u nabla cdF}
\begin{array}{rcl}
\nabla u \cdot \nabla (cdF) &=& u_r (cd'F + cdF'u_r) + \dfrac{u_s}{(1-Kr)^2} (c'dF + cdF'u_s) \\
&=& cdF' |\nabla u|^2 + cd'F u_r + c'dF \dfrac{u_s}{(1-rK)^2},
\end{array}
\end{equation}
hence
\begin{equation*}
\nabla u\cdot\nabla w = \nabla u\cdot\nabla J - \dfrac{1}{1-rK}c'dF J - cdF'|\nabla u|^2 - cd'Fu_r + \dfrac{1}{1-rK} cc'd^2F^2,
\end{equation*}
to obtain
\begin{eqnarray*}
J_t - \Delta J 
&=&  \left( -\dfrac{pK}{1-rK}|\nabla u|^{p-2}u_r + \dfrac{K^2}{(1-rK)^2} -\dfrac{K'}{(1-rK)^3}\dfrac{\alpha'}{\beta'} - \dfrac{2}{1-rK}c'dF' \right. \\
&&  \left. - \dfrac{p}{1-rK}c'dF|\nabla u|^{p-2} \right) J
+  \left( p|\nabla u|^{p-2}\nabla u - \dfrac{2K}{1-rK}N(s)\right) \cdot \nabla J  \\
& &- (p-1)cdF'|\nabla u|^p + \dfrac{p}{1-rK}cc'd^2F^2|\nabla u|^{p-2} \\
& & -pcd'F|\nabla u|^{p-2} u_r + \dfrac{pK}{1-rK}cdF|\nabla u|^{p-2}u_r -cdF'' |\nabla u|^2 \\
& & + \dfrac{2K}{1-rK}cdF'u_r - 2cd'F'u_r + \dfrac{3K}{1-rK} cd'F - cd''F  \\
& & + \dfrac{2}{1-rK}cc'd^2FF' - \dfrac{rK'}{(1-rK)^3}c'dF -\dfrac{K^2}{(1-rK)^2} cdF \\
& & + \dfrac{K'}{(1-rK)^3}\dfrac{1}{\beta'} \left(u_x + \alpha'cdF\right).
\end{eqnarray*}
Let $\mathcal{P}J := J_t - \Delta J -aJ - b\cdot \nabla J$,
where $a,b$ are defined in the statement of the Lemma.
Using $K,K'\geq 0$ and the definitions of $c,d,F$,  along with $\beta'>0$, $0<\alpha'\leq 1$ and $0<c\le \bar c$, 
we then have, in~$(D_\Gamma\cap \{s>\eta\}\cap \Omega)\times (0,T)$:
\begin{eqnarray*}
\dfrac{\mathcal{P}J}{cdF} &=& -(p-1)\dfrac{F'}{F}|\nabla u|^p + \dfrac{p}{1-rK}c'dF|\nabla u|^{p-2} - p\dfrac{d'}{d}|\nabla u|^{p-2} u_r \\
& & + \dfrac{pK}{1-rK}|\nabla u|^{p-2}u_r - \dfrac{F''}{F}|\nabla u|^2 + \dfrac{2K}{1-rK}\dfrac{F'}{F}u_r - 2\dfrac{d'F'}{dF}u_r \\
& & + \dfrac{3K}{1-rK}\dfrac{d'}{d} - \dfrac{d''}{d} + \dfrac{2}{1-rK}  c'dF' - \dfrac{rK'}{(1-rK)^3}\dfrac{c'}{c} - \dfrac{K^2}{(1-rK)^2} \\
& & + \dfrac{K'}{(1-rK)^3} \dfrac{1}{\beta' cdF} (u_x + \alpha' cdF) \\
&\leq & -(p-1) q \dfrac{|\nabla u|^p}{u} + \dfrac{pk}{1-rK}\dfrac{u^q |\nabla u|^{p-2}}{r^\gamma} 
+ p\dfrac{|\nabla u|^{p-1}}{r}\left(\gamma + \dfrac{rK}{1-rK}\right) \\
& & - q(q-1) \dfrac{|\nabla u|^2}{u^2} + \dfrac{2q}{r}\dfrac{|\nabla u|}{u}\left(\gamma + \dfrac{rK}{1-rK}\right) + \dfrac{2qk}{1-rK} \dfrac{u^{q-1}}{r^\gamma} - \dfrac{\gamma(\gamma+1)}{r^2} \\
& & + \dfrac{K'}{(1-rK)^3} \dfrac{1}{\beta' cdF} (u_x +  \bar c dF)
\end{eqnarray*}
that is, \eqref{J parab ineq 1}.

\eject
\textit{Proof of inequality \eqref{J tilde parab ineq 1}:}

In a similar but simpler way as in the computation for $J$ and using \eqref{psi_r psi_s}, we compute,
in $(D_\Gamma\cap \{ s>0\} \cap\Omega)\times (0,T)$,
\begin{eqnarray*}
\bar{J}_t &=& u_{tx} + \bar c dF'u_t, \\
\Delta \bar{J} &=& (\Delta u)_x +  \bar c dF'' |\nabla u|^2 +  \bar c F\left(d'' - \dfrac{K}{1-rK}d'\right) + \dfrac{rK'}{(1-rK)^3}  \bar c'dF  \\
& & + 2 \bar cd'F'u_r + \dfrac{2}{(1-rK)^2}  \bar c'dF'u_s + \bar cdF' \Delta u \\
&=& (\Delta u)_x +  \bar c dF'' |\nabla u|^2 +  \bar c F\left(d'' - \dfrac{K}{1-rK}d'\right) + \dfrac{rK'}{(1-rK)^3} \bar c'dF  \\
& & +\dfrac{2\alpha'}{1-rK}  \bar c' dF' u_x + 2 \bar c d'F' \left(u_r + \beta'\dfrac{u_y}{1-rK} \dfrac{ \bar c' d}{ \bar c d'}\right) +  \bar c dF' \Delta u.
\end{eqnarray*} 
Then we obtain
\begin{eqnarray*}
\bar{J}_t - \Delta \bar{J}  &=& (|\nabla u|^p)_x +  \bar c dF' |\nabla u|^p - \dfrac{2}{1-rK}\alpha'  \bar c'dF' u_x - \bar c dF''|\nabla u|^2 \\
 & & - 2 \bar cd'F' \left(u_r + \beta'\dfrac{u_y}{1-rK}\dfrac{ \bar c'd}{ \bar cd'}\right)  -  \dfrac{rK'}{(1-rK)^3}  \bar c'dF - \bar cF\left(d'' - \dfrac{K}{1-rK}d'\right).
\end{eqnarray*}

In view of $u_x = \bar{J} -  \bar c dF$, and using \eqref{psi_r psi_s} and \eqref{nabla u nabla cdF}, we compute
\begin{eqnarray*}
(|\nabla u|^p)_x &=& p|\nabla u|^{p-2} \nabla u\cdot \nabla u_x \\
&=& p|\nabla u|^{p-2} \nabla u\cdot \nabla \bar{J} -  p|\nabla u|^{p-2}\nabla u\cdot \nabla(\bar c dF) \\
&=& p|\nabla u|^{p-2} \nabla u\cdot \nabla \bar{J} - p \bar c dF'|\nabla u|^p - \alpha'\dfrac{p}{1-rK} \bar c'dF|\nabla u|^{p-2}u_x \\
& & -p \bar c d'F|\nabla u|^{p-2} \left(u_r + \beta' \dfrac{u_y}{1-rK}\dfrac{ \bar c'd}{\bar cd'}\right).
\end{eqnarray*}
It then follows that
\begin{equation}\label{tilde J parabolic op}
\begin{array}{rcl}
\bar{J}_t - \Delta \bar{J} &=& \bar{a} \bar{J} +\bar{b}\cdot \nabla\bar{J} - (p-1)\bar cdF'|\nabla u|^p + \alpha'\dfrac{p}{1-rK}  \bar c \bar c'd^2F^2|\nabla u|^{p-2} \\
 \noalign{\vskip 1mm}
& & - p \bar cd'F|\nabla u|^{p-2} \left( u_r + \beta'\dfrac{u_y}{1-rK}\dfrac{ \bar c'd}{\bar cd'}\right) -  \bar cdF''|\nabla u|^2 \\
 \noalign{\vskip 1mm}
& & - 2 \bar cd'F'\left( u_r + \beta'\dfrac{u_y}{1-rK}\dfrac{ \bar c'd}{\bar cd'}\right) + \dfrac{2\alpha'}{1-rK}  \bar c \bar c'd^2 FF' \\
 \noalign{\vskip 1mm}
& &  - \dfrac{rK'}{(1-rK)^3}  \bar c'dF -  \bar cF \left(d'' - \dfrac{K}{1-rK}d'\right),
\end{array}
\end{equation}
where
$$
\bar{a} = - \alpha' \dfrac{p}{1-rK} \bar c'dF|\nabla u|^{p-2} - \dfrac{2\alpha'}{1-rK}  \bar c'dF',
\quad
\bar{b} = p|\nabla u|^{p-2} \nabla u.
$$

In view of the symmetry of $\Omega$ and $\Gamma$ (assumption \eqref{increasing curvature}), we have 
$\beta'(0)= 0$.  
By the regularity of $\partial\Omega$, it follows that there exists $\tau=\tau(\Omega)>0$ such that
$$
\beta'(s) \leq \tau s, \qquad \forall s\in [0,s_0].
$$

Let $\bar{\mathcal{P}} \bar{J} := \bar{J}_t - \Delta\bar{J} -\bar{a}\bar{J} - \bar{b} \cdot \nabla \bar{J}$,
where $\bar a, \bar b$ are defined in the statement of the Lemma.
Plugging the definitions of $ \bar c,d,F$ in the expression \eqref{tilde J parabolic op}, and using the above inequality,
$K'\ge 0$ and $\alpha'\leq 1$, we obtain
\begin{eqnarray*}
\dfrac{\bar{\mathcal{P}} \bar{J}}{ \bar cdF} &=& - (p-1) \dfrac{F'}{F}|\nabla u|^p + \alpha'\dfrac{p}{1-rK}\bar c'dF|\nabla u|^{p-2} \\
& & - p\dfrac{d'}{d}|\nabla u|^{p-2} \left(u_r + \beta'\dfrac{u_y}{1-rK}\dfrac{ \bar c'd}{ \bar cd'}\right) - \dfrac{F''}{F}|\nabla u|^2 \\
& & - 2\dfrac{d'F'}{dF} \left(u_r + \beta'\dfrac{u_y}{1-rK}\dfrac{ \bar c'd}{ \bar cd'}\right) + \dfrac{2\alpha'}{1-rK}  \bar c'dF' \\
& &  - \dfrac{rK'}{(1-rK)^3} \bar c'dF  -\dfrac{d''}{d} + \dfrac{K}{1-rK}\dfrac{d'}{d} \\
&\leq & -(p-1) q \dfrac{|\nabla u|^p}{u} + \dfrac{pk}{1-rK}\dfrac{u^q |\nabla u|^{p-2}}{r^\gamma} 
+ p\dfrac{|\nabla u|^{p-1}}{r}\left(\gamma + \dfrac{\tau r}{1-rK}\right) \\
& & - q(q-1) \dfrac{|\nabla u|^2}{u^2} + \dfrac{2q}{r}\dfrac{|\nabla u|}{u}\left(\gamma + \dfrac{\tau r}{1-rK}\right) + \dfrac{2qk}{1-rK} \dfrac{u^{q-1}}{r^\gamma} - \dfrac{\gamma(\gamma+1)}{r^2}.
\end{eqnarray*}

\textit{Proof of inequality \eqref{Theta ineq}:}

Using Young's inequality we obtain, for any $X>0$,
$$
\dfrac{2q}{r} \dfrac{|\nabla u|}{u}  X \leq q(q-1) \dfrac{|\nabla u|^2}{u^2} + \dfrac{q}{q-1}\dfrac{X^2}{r^2},
$$
and
$$
p\dfrac{|\nabla u|^{p-1}}{r}  X \leq \dfrac{\sigma}{2r^2} X + \dfrac{p^2}{2\sigma}X |\nabla u|^{2p-2},
$$
hence,
\begin{equation}\label{young 1}
\dfrac{2q}{r}\dfrac{|\nabla u|}{u}  X - q(q-1) \dfrac{|\nabla u|^2}{u^2} - \dfrac{\gamma(\gamma+1)}{r^2} 
\leq \left( \dfrac{q}{q-1} X^2 - \gamma(\gamma+1)\right) \dfrac{1}{r^2},
\end{equation}
and
\begin{equation}\label{young 2}
\begin{array}{l}
-(p-1)q\dfrac{|\nabla u|^p}{u} + p \dfrac{|\nabla u|^{p-1}}{r} X \\
\noalign{\vskip 1mm}
\qquad \quad \leq \left( \dfrac{p^2 X}{2\sigma} u|\nabla u|^{p-2} - (p-1)q \right) \dfrac{|\nabla u|^p}{u} + \dfrac{\sigma}{2r^2}  X.
\end{array}
\end{equation}
Using \eqref{bernstein estimate 1}, we obtain the following estimates
\begin{equation}\label{Bernstein 1}
\dfrac{u^q |\nabla u|^{p-2}}{r^\gamma} \leq L^{q+p-2} r^{(q-1)\frac{p-2}{p-1}-\gamma}  = L^{q+p-2} r^{(q-1)(2\sigma - \frac{1}{p-1})},
\end{equation}
\begin{equation}\label{Bernstein 2}
\dfrac{u^{q-1}}{r^\gamma} \leq L^{q-1} r^{(q-1)\frac{p-2}{p-1}-\gamma}   = L^{q-1} r^{(q-1)(2\sigma - \frac{1}{p-1})},
\end{equation}
\begin{equation}\label{Bernstein 3}
u|\nabla u|^{p-2} \leq  L^{p-1},
\end{equation}
where $L=L(p,\Omega,\|u_0\|_{C^1})>0$.
Combining \eqref{young 1}-\eqref{Bernstein 3}, we obtain
$$
\begin{array}{l}
 \Theta (X) \leq \left[k\left( p L^{q+p-2} + 2q L^{q-1} \right) \dfrac{ r^{(q-1)(2\sigma - \frac{1}{p-1})+2} }{1-rK} + \dfrac{q}{q-1} X^2  
  + \dfrac{\sigma}{2}  X - \gamma(\gamma+1) \right] \dfrac{1}{r^2}  \\
\noalign{\vskip 1mm}
 \qquad \quad         + \left( \dfrac{p^2  X}{2\sigma}  L^{p-1} - (p-1)q\right) \dfrac{|\nabla u|^p}{u},
\end{array}
$$
hence \eqref{Theta ineq}.
\end{proof}

\section{Proof of Theorem \ref{second class theorem}}\label{proof of main result}

\begin{proof}
(i) We shall produce suitable initial data by means of Proposition \ref{GBU location lemma}.
Fix $\phi\in C^\infty([0,\infty))$ such that $\phi=1$ on $[0,1]$, $\phi=0$ on $[3/2,\infty)$ and $\phi'\le 0$.
Take $\rho>0$ so small that 
\begin{equation}\label{supp u_0B}
B_{\rho} (0,0)\cap \partial\Omega \subset \gamma \left(-\frac{s_0}{2},\frac{s_0}{2}\right).
\end{equation}
Let $C_1,C_2$ be given by Proposition \ref{GBU location lemma}, pick any $\eps\in (0,\rho/4)$ such that $C_1\eps^k<C_2$ and set
$$u_0(x,y)=C_2\phi\biggl(\frac{\sqrt{x^2+(y-\eps)^2})}{\eps/2}\biggr).$$
Then we immediately have \eqref{L infinity u_0} and ${\rm supp}(u_0)\subset B_\eps(0,\eps)\subset B_{\rho/2}(0,0)$.
Also, by taking $\eps>0$ possibly smaller, we get $B_\eps(0,\eps)\subset \Omega$,
hence \eqref{supp u_0} and \eqref{inf u_0}.
It thus follows from Proposition \ref{GBU location lemma} that $T(u_0)<\infty$ and, in view of \eqref{supp u_0B},
 that condition \eqref{GBUS location second class} is satisfied.

On the other hand, \eqref{u_0 symmetric second class}, and then $u_{0,x} \leq 0$ in \eqref{monotonicity condition}, are clearly satisfied. 
Moreover, by considering $\eps>0$ possibly smaller, 
the reflection properties \eqref{u_0 moving planes hyp second class} and \eqref{u_0 moving planes hyp first class} hold trivially.
In order to prove $u_{0,s} \leq 0$ in \eqref{monotonicity condition}, we can use formula \eqref{psi_r psi_s} to obtain
$$
\dfrac{u_{0,s}}{1-rK} = \dfrac{2C_2}{\eps} \phi'\left( \dfrac{\sqrt{x^2+(y-\eps)^2}}{\eps/2} \right) \dfrac{\alpha' x + \beta' (y-\eps)}{\sqrt{x^2+(y-\eps)^2}}.
$$
Then, in view of $\phi'\leq 0$ and the definition of the change of coordinates map $(x,y) = M(r,s) \ =\gamma(s)+rN(s)$,
it suffices to check that $(\gamma',\gamma+rN-\eps e_2)\ge 0$ 
for all sufficiently small $\eps, s>0$.
To do this, let us write the Taylor expansions 
$$\gamma'(s)=e_1+sR_1(s),\quad \gamma(s)=se_1+s^2R_2(s), \quad\hbox{ for all $s>0$ small},$$
where $|R_1|, |R_2|\le C_3$ for some constant $C_3>0$.
Using also $N\perp \gamma'$, it follows that
$$(\gamma',\gamma+rN-\eps e_2)=\bigl(e_1+sR_1(s),s(e_1+sR_2(s))-\eps e_2\bigr)
\ge s(1-C_3\eps-2C_3s-C_3^2s^2)\ge 0$$
for all sufficiently small $\eps, s>0$.

(ii)  The assertion will be derived as a consequence of Theorem \ref{local result}. 
For this it suffices to establish the monotonicity properties \eqref{monotonicity condition}. The proof is done in two steps.

 \textit{Step 1: Parabolic inequality.}
Consider the auxiliary function 
$$
w = \dfrac{u_s}{1-rK} \qquad \text{in} \ \mathcal{Q}_{T}:=\omega_0\times [0,T).
$$
In view of \eqref{parabolic ineq second class}, $w$ satisfies
\begin{equation}\label{parabolic ineq second class bis}
w_t - \Delta w = a_w w + b_w\cdot \nabla w + \dfrac{K'}{(1-rK)^3} \frac{1}{\beta'(s)}u_x,
\end{equation}
with
\begin{equation*}
\begin{array}{l}
a_w = \dfrac{K^2}{(1-rK)^2} - \dfrac{pK}{1-rK}|\nabla u|^{p-2} u_r - \dfrac{K'}{(1-rK)^3}\dfrac{\alpha'(s)}{\beta'(s)}, \\
 \noalign{\vskip 1mm}
b_w = p |\nabla u|^{p-2}\nabla u - \dfrac{2K}{1-rK}N(s).
\end{array}
\end{equation*}
For any $T'\in (0,T)$, we have 
$\sup_{\mathcal{Q}_{T'}} |\nabla u|<\infty$. 
Also, by hypothesis \eqref{centerout}, $1-rK$ is bounded away from $0$ in $\omega_0$. This, together with $K'\ge 0,\alpha',\beta'>0$, implies 
\begin{equation}\label{control a second class}
\sup_{\mathcal{Q}_{T'}} a_w <\infty.
\end{equation}

Since $u_0\geq 0$ in $\Omega$, by the strong maximum principle we have $u>0$ in $\Omega\times(0,T)$.
Therefore, by Hopf's lemma we get 
\begin{equation}\label{uHopf}
u_\nu < 0\quad\hbox{ on $\partial\Omega\times (0,T)$,}
\end{equation}
where $u_\nu$ is the derivative of $u$ in the outward normal direction to the boundary.
As consequence, by \eqref{x convexity}, we have
$$
u_x = \nu_x u_\nu\leq 0 \quad \text{on} \ (\partial\Omega \cap \{x>0\})\times(0,T).
$$
By the symmetry of $u_0$ and $\Omega$, we also have
$$
u_x = 0 \quad \text{on} \ [\Omega\cap\{x=0\}]\times (0,T).
$$
Now, we see that $v=u_x$ satisfies
$$
v_t - \Delta v = p|\nabla u|^{p-2}\nabla u\cdot\nabla v \quad
\text{in} \ [\Omega\cap\{x>0\}] \times (0,T).
$$
Then, after hypothesis \eqref{u_0 decreasing hypothesis second class} and the strong maximum principle, we have
\begin{equation}\label{u_x negative}
u_x < 0 \quad \text{in} \ [\Omega\cap \{x>0\} ]\times(0,T).
\end{equation} 
Since $\omega_0 \subset \Omega\cap \{x>0\}$, it follows from \eqref{parabolic ineq second class bis}, \eqref{u_x negative} and $K'\ge 0,\beta'>0$ that
\begin{equation}\label{parabolic ineq second class bis 2}
w_t - \Delta w - a_w w - b_w\cdot \nabla w \leq 0 \qquad \text{in} \ \mathcal{Q}_T.
\end{equation}

\textit{Step 2: Boundary conditions and conclusion.}
We split the boundary of $\omega_0$ in five parts:
\begin{eqnarray*}
\Gamma_1 &=& \{ (\alpha(s),\beta(s));\quad 0<s<s_0\}, \\
\Gamma_2 &=& \Omega \cap \{ x=0\},\\
\Gamma_3 &=& \partial\Omega \cap \partial \omega_0 \cap \{r>0\}, \\
\Gamma_4 &=& \Omega\cap \Lambda_{s_0}, \\
\Gamma_5 &=& \Omega \cap D_\Gamma \cap \{ y = y_0 \}.
\end{eqnarray*}
See Figures \ref{figure moving planes 1} and \ref{figure moving planes 2} for illustrations of such partitions. 
Note that $\Gamma_3$ and/or $\Gamma_5$ may be empty. 
In that case, we need not to care about them.

\begin{figure}[h]
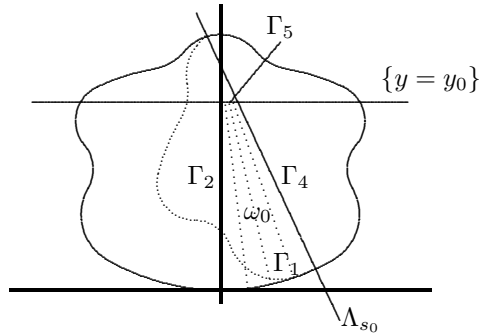

\[
\beginpicture
\setcoordinatesystem units <1cm,1cm>
\setplotarea x from -3 to 3, y from -0.2 to 3.8

\setdots <0pt>
\linethickness=1pt
\putrule from -2.8 0 to 2.8 0
\putrule from 0 -0.2 to 0 3.8

\circulararc 30 degrees from 0 0  center at 0 3
\circulararc 90 degrees from 1.50000 0.40192  center at 1.15000 1.00814
\circulararc -70 degrees from 1.75622 1.35814  center at 2.18923 1.60814
\circulararc 105 degrees from 1.80621 1.92954  center at 1.42319 2.25093
\circulararc 15 degrees from 1.63450 2.70408  center at 0.36664 -0.01484
\circulararc -40 degrees from 0.88758 2.93958  center at 1.00914 3.62895
\circulararc 50 degrees from 0.47291 3.17900  center at 0 2.78218

\circulararc -30 degrees from 0 0  center at 0 3
\circulararc -90 degrees from -1.50000 0.40192  center at -1.15000 1.00814
\circulararc 70 degrees from -1.75622 1.35814  center at -2.18923 1.60814
\circulararc -105 degrees from -1.80621 1.92954  center at -1.42319 2.25093
\circulararc -15 degrees from -1.63450 2.70408  center at -0.36664 -0.01484
\circulararc 40 degrees from -0.88758 2.93958  center at -1.00914 3.62895
\circulararc -50 degrees from -0.47291 3.17900  center at 0 2.78218

\setdots <1.5pt>
\circulararc -5 degrees from 1.2679 0.28108  center at 0 3
\circulararc -90 degrees from 1.026105 0.180925  center at 0.78669 0.83871
\circulararc 70 degrees from 0.128903 0.599296  center at -0.340944 0.428285
\circulararc -105 degrees from -0.3409444 0.928287  center at -0.340944 1.428288
\circulararc -15 degrees from -0.823909 1.557698  center at 2.073877 0.78124
\circulararc 40 degrees from -0.524207 2.2812429  center at -1.130427 2.631243
\circulararc -50 degrees from -0.44106 2.752798  center at 0.166903 2.859999
\circulararc -4 degrees from -0.306008 3.256818  center at 1.992132 1.32845

\setdots <0pt>
\setlinear 
\plot 1.5848 -0.39865  -0.31696 3.6797 /
\plot 0.13 2.5  0.8 3.3 /

\plot -2.5 2.5  2.5 2.5 / 

\setdots <2.5pt>
\plot 0.36561 0.022362  0.061386 2.5 /
\plot 0.67485 0.076890  0.11542 2.5 / 
\plot 0.97670 0.16344  0.17216 2.5 /

\put{$\Gamma_1$}[ld] at 0.7 0.35
\put{$\Gamma_2$}[cc] at -0.25 1.5
\put{$\Gamma_4$}[cc] at 1 1.5
\put{$\Lambda_{s_0}$}[lc] at 1.6 -0.4
\put{$\{y=y_0\}$}[cc] at 2.8 2.8
\put{$\Gamma_5$}[cc] at 0.8 3.5
\put{$\omega_0$}[cc] at 0.5 1

\endpicture
\]
\caption{Illustration of the partition of $\partial \omega_0$. In this case $\Gamma_3 = \emptyset$.}
\label{figure moving planes 1}
\end{figure}

\begin{figure}[h]
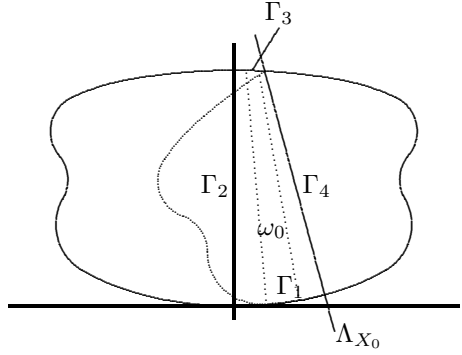

\[
\beginpicture
\setcoordinatesystem units <1cm,1cm>
\setplotarea x from -3 to 3, y from -0.2 to 3.5

\setdots <0pt>
\linethickness=1pt
\putrule from -3 0 to 3 0
\putrule from 0 -0.2 to 0 3.5

\circulararc 10 degrees from 0 0  center at 0 5
\circulararc 10 degrees from 0.86824 0.07596  center at 0.17365 4.01519
\circulararc 10 degrees from 1.54173 0.25642  center at 0.51567 3.07550
\circulararc 90 degrees from 2.01567 0.47742  center at 1.66567 1.08364
\circulararc -70 degrees from 2.27189 1.43364  center at 2.70490 1.68364
\circulararc 105 degrees from 2.32188 2.00504  center at 1.93885 2.32643
\circulararc 15 degrees from 2.15016 2.77958  center at 0.88231 0.06066
\circulararc 10 degrees from 1.40325 3.01508  center at 0.0000 -4.94275

\circulararc -10 degrees from 0 0  center at 0 5
\circulararc -10 degrees from -0.86824 0.07596  center at -0.17365 4.01519
\circulararc -10 degrees from -1.54173 0.25642  center at -0.51567 3.07550
\circulararc -90 degrees from -2.01567 0.47742  center at -1.66567 1.08364
\circulararc 70 degrees from -2.27189 1.43364  center at -2.70490 1.68364
\circulararc -105 degrees from -2.32188 2.00504  center at -1.93885 2.32643
\circulararc -15 degrees from -2.15016 2.77958  center at -0.88231 0.06066
\circulararc -10 degrees from -1.40325 3.01508  center at 0.0000 -4.94275

\setdots <1pt>
\circulararc -5 degrees from 1.208900 0.151490  center at 0.173620 4.015210
\circulararc -10 degrees from 0.868215 0.075962  center at 0.347269 3.030398
\circulararc -90 degrees from 0.347268 0.030385  center at 0.347268 0.730388
\circulararc 70 degrees from -0.352735 0.730388  center at -0.852737 0.730388
\circulararc -105 degrees from -0.681726 1.200237  center at -0.510715 1.670085
\circulararc -15 degrees from -0.920293 1.956874  center at 1.537174 0.236137
\circulararc -7 degrees from -0.391197 2.534281  center at 4.802934 -3.655845

\setdots <0pt>
\setlinear 
\plot 1.33833 -0.33147  0.28215 3.61028 /

\plot 0.25 3.15  0.6 3.7 /

\setdots <2pt>
\setlinear
\plot 0.86824 0.075961  0.32955 3.1310 /
\plot 0.43578 0.019027  0.16307 3.1361 /

\put{$\Lambda_{X_0}$}[lc] at 1.35 -0.4
\put{$\Gamma_4$}[cc] at 1.1 1.6
\put{$\Gamma_2$}[cc] at -0.25 1.6
\put{$\omega_0$}[cc] at 0.5 1
\put{$\Gamma_1$}[cc] at 0.75 0.25
\put{$\Gamma_3$}[cc] at 0.6 3.9

\endpicture
\]
\caption{Illustration of the partition of $\partial\omega_0$ when $y_0 = \infty$. In this case $\Gamma_5 = \emptyset$.}
\label{figure moving planes 2}
\end{figure}

Since $u=0$ on $\partial\Omega$, we have
\begin{equation}\label{usGamma1}
u_s= 0\quad\hbox{ on $\Gamma_1\times [0,T)$.}
\end{equation}
By the symmetry of the domain, and using \eqref{psi_r psi_s}, we have $u_s=(1-rK)u_x$ on $\Gamma_2$, and by~\eqref{u_0 symmetric second class}, 
we deduce 
\begin{equation}\label{usGamma2}
u_s=0\quad\hbox{ on $\Gamma_2\times [0,T)$.}
\end{equation}
On the other hand, as consequence of \eqref{uHopf}, \eqref{x convexity} and \eqref{thin domain condition 2 1}, we have
$$
u_x = \nu_x u_\nu \leq 0, \qquad u_y = \nu_y u_\nu \leq 0 \qquad\hbox{ on $\Gamma_3\times [0,T)$.}
$$
Now, we recall from \eqref{psi_r psi_s}
\begin{equation}\label{usGamma3aux}
u_s = (1-rK) (\alpha'(s) u_x + \beta'(s) u_y)
\end{equation}
Then it follows from \eqref{technical condition} that
\begin{equation}\label{usGamma3}
u_s \leq 0 \qquad \text{on} \ \Gamma_3\times [0,T).
\end{equation}

Next, we shall prove by a moving planes argument that
\begin{equation}\label{usGamma4}
u_s\leq 0\quad\hbox{ on $\Gamma_4\times [0,T)$. }
\end{equation}
We define in $\Omega_{s_0}\times (0,T)$ the functions
$$u_1(x,y,t) = u(x,y,t), \qquad u_2(x,y,t) = u(\mathcal{T}_{s_0}(x,y),t), $$
where $\Omega_{s_0}$ and $\mathcal{T}_{s_0}$ are defined in Notation \ref{notation2}.
We note that $u_2$ is well defined since by condition \eqref{thin domain condition 2 2}, $\mathcal{T}_{s_0} ( x, y)\in \Omega$, for all $(x,y)\in \Omega_{s_0}$.
Both functions $u_1, u_2$ satisfy the equation
$$
u_{i,t} - \Delta u_i = |\nabla u_i|^p \qquad \text{in} \ \Omega_{s_0}\times (0,T),
$$
for $i=1,2$. 
By condition \eqref{u_0 moving planes hyp second class}, we have
$$
u_1(x,y,0) \leq u_2(x,y,0) \qquad \text{in} \ \Omega_{s_0}.
$$
The boundary of $\Omega_{s_0}$ is composed of two parts:
$$
\Gamma_1^{s_0} := \partial\Omega\cap\partial\Omega_{s_0},\qquad \Gamma_2^{s_0} := \Lambda_{s_0}\cap \Omega.
$$
On $\Gamma_1^{s_0}$ we have $u_1(x,y,t) = 0$ and $u_2(x,y,t)\geq 0$ since $u\geq 0$ in $\Omega\times (0,T)$. 
On $\Gamma_2^{s_0}$ we have $u_1(x,y,t) = u_2(x,y,t)$, since $\mathcal{T}_{s_0} (x,y) = (x,y)$, for all $(x,y)\in \Lambda_{s_0}$.
So we conclude that $u_1(x,y,t)\leq u_2(x,y,t)$ on $\partial\Omega_{s_0}\times [0,T)$.
As a consequence of the comparison principle, we get $u_1\leq u_2$ in $\Omega_{s_0}\times [0,T)$.
Letting $(x,y)$ go to $\Lambda_{s_0}$ in the normal direction to $\Lambda_{s_0}$, we deduce \eqref{usGamma4}.

In order to show that 
\begin{equation}\label{usGamma5}
u_s\leq 0\quad\hbox{ on $\Gamma_5\times [0,T)$,}
\end{equation}
we observe that, as a consequence of \eqref{Omega plus}, \eqref{u_0 moving planes hyp first class}
and of a similar moving planes argument as in the case of $\Gamma_4$, we have
\begin{equation}\label{usGamma5aux}
u_y \leq 0\quad\hbox{ in $(\Omega\cap \{y=y_0\})\times [0,T)$.}  
\end{equation}
Property \eqref{usGamma5} then follows from \eqref{u_x negative},  \eqref{usGamma3aux},  \eqref{usGamma5aux}
and \eqref{technical condition}.

Then, since $\partial\omega_0 = \Gamma_1\cup\Gamma_2\cup\Gamma_3\cup\Gamma_4\cup \Gamma_5$, it follows
from \eqref{usGamma1}, \eqref{usGamma2}, \eqref{usGamma3}-\eqref{usGamma5} that
\begin{equation}\label{boundary condition conclusion}
 u_s \leq 0\qquad \text{on} \ \partial \omega_0 \times (0,T).
\end{equation}
In view of \eqref{control a second class}, \eqref{parabolic ineq second class bis 2}, \eqref{boundary condition conclusion}, \eqref{u_0 decreasing hypothesis second class}, 
it follows from the strong maximum principle that
$$
u_s < 0 \qquad \text{in} \ \omega_0\times (0,T).
$$
This, together with \eqref{u_x negative} and \eqref{GBUS location second class}, allows us to apply Theorem \ref{local result}, and the conclusion follows.
\end{proof}

\section{Proof of Theorems \ref{ellipses theorem} and \ref{almost flat theorem}}\label{reduction section}

Here we give the proofs of Theorems \ref{ellipses theorem} and \ref{almost flat theorem} as consequence of Theorem \ref{second class theorem}.
We shall verify that the hypotheses of Theorem \ref{second class theorem} hold 
for ellipses and for the domains satisfying the assumptions of Theorem \ref{almost flat theorem}. 

\begin{proof}[Proof of Theorem \ref{ellipses theorem}]
We only give the proof for ellipses with positive eccentricity. For disks, see \cite{Li-Souplet}.
Without loss of generality, we assume that the minor axis of the ellipse is on the half-line $\{x=0; \ y\geq 0\}$
and that the lower co-vertex is at the origin.
Then, assumption \eqref{x convexity} holds.
If we consider $\Gamma$ a connected boundary piece containing the origin and symmetric with respect to $x=0$, we can use Notation \ref{notation1}. 

Now, take $y_0>0$ such that the major axis of the ellipse is on the line $y=y_0$.
Hypothesis \eqref{Omega plus} is then satisfied.
Moreover, in view of the position of the ellipse,  it is well known that the center of curvature at any point of
$\partial\Omega \cap \{y<y_0\}$
lies in the half-plane $\{y>y_0\}$.
Considering $s_0>0$  small enough so that $\Gamma\subset\{y<y_0\}$,
it follows that conditions \eqref{increasing curvature}, \eqref{technical condition}, \eqref{centerout} and \eqref{thin domain condition 2 1}
are satisfied.

Now let us verify that \eqref{thin domain condition 2 2} also holds for this choice of $\Gamma$.
Here, we recall the definitions of $H_{s_0}$ and $\Lambda_{s_0}$ in Notation \ref{notation2}.
We shall prove that the symmetric of $\partial\Omega\cap H_{s_0}$ with respect to $\Lambda_{s_0}$ lies in $\Omega$,
which guarantees \eqref{thin domain condition 2 2} by convexity.

Let $\partial\Omega$ be the original ellipse and $\mathcal{T}_{s_0}(\partial\Omega)$ its symmetric with respect to the line $\Lambda_{s_0}$.
We observe that the two ellipses intersect in at least two points, which are the two intersection points of $\partial\Omega$ with $\Lambda_{s_0}$. We also know that any two ellipses intersect in at most four points, counting the multiplicity.
Since $\Lambda_{s_0}$ is normal to $\partial\Omega$ at $\gamma(s_0)$, the two ellipses $\partial\Omega$ and $\mathcal{T}_{s_0}(\partial\Omega)$ are tangent to each other at that point, 
which is then an intersection point of multiplicity at least 2. 

Therefore, there can be at most one other 
intersection point between the two ellipses. 
By convexity, it cannot be on the segment $\Lambda_{s_0}\cap \Omega$, and by symmetry with respect to the line $\Lambda_{s_0}$, 
if there is an intersection point on one side of $\Lambda_{s_0}$, there must be another one on the other side. 
Hence, the two ellipses only intersect in two points.

Finally, since the curvature of $\partial\Omega$ increases from the origin up to the right vertex, 
near $\gamma(s_0)$, the symmetric of $\partial\Omega\cap H_{s_0}$ lies in $\Omega$.
As we have seen, it does not intersect again the boundary of $\Omega$ until the other intersection of $\Lambda_{s_0}$ with $\partial\Omega$. 
Therefore, we conclude that $\mathcal{T}_{s_0} (\partial\Omega\cap H_{s_0}) \subset\Omega$.
Hence, $\Omega$ satisfies all the hypothesis of Theorem \ref{second class theorem}, and the conclusion follows.
\end{proof}

\begin{proof}[Proof of Theorem \ref{almost flat theorem}]
We give the proof for the case when $\Omega$ is not locally flat at the origin. 
That is, in assumption \eqref{Omega tangent to y>0} $\partial\Omega$ only touches $y=0$ at the origin.
For locally flat domains, see \cite{Li-Souplet}.

As in the proof of Theorem~\ref{ellipses theorem}, we shall verify that all the hypotheses of Theorem~\ref{second class theorem} hold.
In view of assumptions \eqref{x symmetry and convexity pre} and \eqref{Omega tangent to y>0}, and considering a suitable boundary piece, we can use Notation~\ref{notation1}, and hypothesis \eqref{x convexity} is satisfied.
By taking a smaller $\Gamma$ if necessary, hypotheses \eqref{increasing curvature}, \eqref{technical condition} and \eqref{thin domain condition 2 2} are guaranteed by assumptions \eqref{small curvature at the origin pre} and \eqref{symmetry K(0) pre}.

The assumption $\overline{\Omega}\subset \{y<R (0)\}$ in \eqref{small curvature at the origin pre}, implies that
the center of curvature of the boundary at the origin is at positive distance of $\Omega$ (possibly at infinity).
Since the curvature is a continuous function
due to the regularity of the boundary, 
considering a smaller $\Gamma$ if necessary, the evolute of 
$\Gamma$ is also at positive distance of $\Omega$.
Therefore, hypothesis \eqref{centerout} is satisfied with $y_0=+\infty$, and then \eqref{Omega plus} is trivial.

As for hypothesis  \eqref{thin domain condition 2 1}, we note that  in view of \eqref{x symmetry and convexity pre} and since $\Omega$ is smooth and 
connected, $\Omega\cap \{x=\eta\}$ is a segment for all $\eta>0$ small.
Therefore, \eqref{thin domain condition 2 1} holds by considering a possibly smaller $\Gamma$. 
\end{proof}

{\bf Acknowledgements.} 
The author was partially supported by Sorbonne Universit\'e, Laboratoire Jaques-Louis Lions (LJLL)
Paris, France.

\end{document}